\documentclass[11pt]{amsart}

\usepackage[verbose=true]{microtype}
\usepackage{amsmath}
\usepackage{amssymb}
\usepackage{amsthm}
\usepackage{bbm}
\usepackage{mathtools}
\usepackage{color,colortbl}
\usepackage[colorlinks,linkcolor=black,citecolor=black,linktocpage,hypertexnames=true,pdfpagelabels]{hyperref}
\usepackage[all]{xy}
\usepackage{subcaption}
\usepackage{array}
\usepackage{enumerate}
\usepackage{enumitem}
\usepackage{tikz}
\usepackage{relsize}
\usepackage{diagbox}
\usepackage{tikz-cd} 

\usepackage[section]{placeins}

\usepackage{mdframed}
\usepackage[foot]{amsaddr}

\newtheorem{theorem}{Theorem}
\newtheorem{lemma}[theorem]{Lemma}
\newtheorem{corollary}[theorem]{Corollary}
\newtheorem{proposition}[theorem]{Proposition}

\newtheorem{definition}[theorem]{Definition}

\theoremstyle{definition}

\newtheorem{example}[theorem]{Example}
\newtheorem{remark}[theorem]{Remark}
\newtheorem{algorithm}[theorem]{Algorithm}
\newtheorem{method}{Method}

\makeatletter
\renewenvironment{proof}[1][\proofname]{\par
  \normalfont \topsep6\p@\@plus6\p@\relax
  \trivlist
  \item[\hskip\labelsep
        \itshape
    #1\@addpunct{.}]\ignorespaces
}{
  \endtrivlist\@endpefalse
}

\makeatother

\DeclareMathOperator{\trace}{Tr}

\renewcommand{\epsilon}{\varepsilon}

\newcommand{\N}{\mathbb{N}}
\newcommand{\C}{\mathbb{C}}

\newcommand{\diag}{\textrm{diag}}
\newcommand{\rank}{\textrm{rank}}

\usepackage{dsfont}

\newcommand{\be}{\begin{eqnarray}}
\newcommand{\ee}{\end{eqnarray}}
\newcommand{\ben}{\begin{enumerate}}
\newcommand{\een}{\end{enumerate}}

\newcommand{\ba}{\begin{array}}
\newcommand{\ea}{\end{array}}

\newcommand{\mc}{\mathcal}

\newcommand{\tr}{\mathrm{tr}}

\newcommand{\nn}{\nonumber}

\newcommand{\conv}{\mathrm{conv}}

\setcounter{secnumdepth}{2}

\makeatletter
\def\p@subsection{}
\def\p@subsubsection{}
\makeatother

\let\originalleft\left
\let\originalright\right
\renewcommand{\left}{\mathopen{}\mathclose\bgroup\originalleft}
\renewcommand{\right}{\aftergroup\egroup\originalright}

\addtolength{\jot}{5em}

\newcommand\xqed[1]{
  \leavevmode\unskip\penalty9999 \hbox{}\nobreak\hfill
  \quad\hbox{#1}}
\newcommand\demo{\xqed{$\triangle$}}

\usepackage{centernot,cancel}
\newcommand{\notll}{\centernot{\ll}}

\renewcommand{\arraystretch}{1.7}

\usepackage{multirow}

\begin{document}

\title[Approximate tensor decompositions]{Approximate tensor decompositions: disappearance of many separations}

\author{Gemma De las Cuevas}
\address{Institute for Theoretical Physics, Technikerstr.\ 21a,  A-6020 Innsbruck, Austria}

\author{Andreas Klingler}
\address{Institute for Theoretical Physics, Technikerstr.\ 21a,  A-6020 Innsbruck, Austria}

\author{Tim Netzer}
\address{Department of Mathematics, Technikerstr.\ 13,  A-6020 Innsbruck, Austria}

\date{\today}

\begin{abstract}
It is well-known that tensor decompositions show separations, that is, that 
constraints on local terms (such as positivity) may entail an arbitrarily high cost in their representation.  
Here we show that many of these separations disappear in the approximate case.  
Specifically, for every approximation error $\varepsilon$ and norm, 
we define the approximate rank as the minimum rank of an element in the $\varepsilon$-ball with respect to that norm. 
For positive semidefinite matrices, we show that the separations between rank, purification rank, and separable rank disappear for a large class of Schatten $p$-norms.
For nonnegative tensors, we show  that the separations between rank, positive semidefinite rank, and nonnegative rank disappear for all $\ell_p$-norms with $p>1$. 
For the trace norm ($p = 1$), we obtain upper bounds that depend on the ambient dimension. 
We also provide a deterministic algorithm to obtain the approximate decomposition attaining our bounds.
Our main tool is an approximate version of Carath\'eodory's Theorem. 
Our results imply that many separations are not robust under small perturbations of the tensor, with implications in quantum many-body systems and communication complexity.
\end{abstract}

\maketitle

\tableofcontents

\section{Introduction}

The rank of a matrix is a central notion in mathematics, physics and other sciences.
It is defined as  the minimum number of  rank-$1$ matrices (i.e.\  matrices of the form $v w^t$ where $v,w$ are column vectors)  needed to decompose a matrix as a sum thereof. 
This notion has been extended to include constraints. 
For example, for nonnegative matrices (i.e.\ matrices with nonnegative entries), 
the \emph{nonnegative rank}
is the minimum number of nonnegative rank-$1$ matrices, and the \emph{positive semidefinite rank} 
is the smallest integer $r$ such that the entries of the matrix can be written as $M_{ij} = \textrm{Tr}(E_i F^t_j)$ where $E_i$ and $F_j$ are positive semidefinite matrices of size $r \times r$. 
Similarly, for positive semidefinite matrices, generalisations of these two ranks give rise to the separable rank and the purification rank \cite{De19}. 
For tensors (i.e.\ multilinear maps), there is no unique way of defining `a' rank or a rank with constraints --- instead, there are many meaningful ways of doing so.
The various decompositions and their corresponding ranks can be described by a (weighted) simplicial complex whose vertices are associated to the indices of the tensor \cite{De19d}. 
These decompositions of matrices or tensors with constraints have applications in many areas, including classical and quantum information theory \cite{Fa14,Ja13}, quantum many body systems \cite{Or18}, electrical engineering, data analysis (see \cite{Br10b} and references therein) and machine learning (see \cite{Gl19} and references therein).

Now, it is known that there are \emph{separations} between many of these ranks, that is, one cannot upper bound a rank as a function only depending on another one. For example, there is a separation between the rank and the positive semidefinite rank, and between the latter and the nonnegative rank \cite{Fa14,Go12}, i.e.\ there do not exist functions $f, g: \mathbb{N} \to \mathbb{N}$ such that for all nonnegative matrices $M$ of arbitrary dimension, it holds that 
$$\textrm{psd-rank}(M) \leq f(\textrm{rank}(M)) \quad \textrm{ and } \quad \textrm{nn-rank}(M) \leq g(\textrm{psd-rank}(M)).$$
Throughout the paper, we denote a separation with the symbol  $\ll$, i.e.\ $\textrm{rank} \ll \textrm{psd-rank} \ll \textrm{nn-rank}$. 
Known separations and bounds between these ranks are summarised in Table \ref{tab:RelationsMatrixFact}.

\def\arraystretch{1.4}
\setlength{\tabcolsep}{0pt}

\begin{table}[t]
\begin{tabular}{>{\centering\arraybackslash}p{1.5cm}  r@{\hspace{1mm}} | >{\centering\arraybackslash}p{2cm} >{\centering\arraybackslash}p{2cm} >{\centering\arraybackslash}p{2cm} >{\centering\arraybackslash}p{2cm} }
\multicolumn{2}{c|}{\multirow{2}{*}{$\textrm{rank}_i \quad s_{ij} \quad \textrm{rank}_j$}} & $j=1$ & $j=2$ & $j=3$ & $j=4$ \\
\multicolumn{2}{c|}{} & $\textrm{rank}$ & $\textrm{psd-rank}$ & $\textrm{nn-rank}$ & $\textrm{sqrt-rank}$ \\
\hline
$i=1$& $\textrm{rank}$ & & \cellcolor{gray!30} $\leq (\: \cdot \:)^2$ & \cellcolor{gray!10} $\leq$ & \cellcolor{gray!30} $\leq (\: \cdot \:)^2$\\
$i=2$& $\textrm{psd-rank}$ & \cellcolor{gray!50} $\gg$ & & \cellcolor{gray!10} $\leq$ & \cellcolor{gray!10} $\leq$ \\
$i=3$& $\textrm{nn-rank}$ & \cellcolor{gray!50} $\gg$ & \cellcolor{gray!50} $\gg$ & &  \cellcolor{gray!50} $\gg$\\
$i=4$& $\textrm{sqrt-rank}$ & \cellcolor{gray!50} $\gg$ & \cellcolor{gray!50} $\gg$ & \cellcolor{gray!50} $\gg$ &
\end{tabular}
\caption{Relations between the ranks of different notions of matrix factorizations (see also \cite[Table 1]{Fa14}). $s_{ij}$ denotes the entry in cell $(i,j)$. The same relations hold for quantum versions of these decompositions by subsituting $\textrm{rank}$ by $\textrm{osr}$, $\textrm{psd-rank}$ by $\textrm{puri-rank}$, $\textrm{nn-rank}$ by $\textrm{sep-rank}$, and $\textrm{sqrt-rank}$ by $\textrm{q-sqrt-rank}$ (see \cite{De19} for details).}
\label{tab:RelationsMatrixFact}
\end{table}

These separations imply, for example, that the random communication complexity can be arbitrarily larger than the quantum communication complexity \cite{Ja13}. 
Similar separations hold in the noncommutative setting, namely for the operator Schmidt rank, the purification rank  and the separable rank \cite{De13c,De19,De19d}. Relations between these notions of ranks are again summarized in Table \ref{tab:RelationsMatrixFact}. Here the separations imply that there cannot exist a local transformation from the operator Schmidt decomposition to the local purification form. 
Some of these ranks have already been studied in the approximate case (see \cite{St14} for an upper bound of the approximate positive semidefinite rank with respect to the entrywise maximum-norm or \cite{Ja20} for an upper bound of the approximate operator Schmidt rank for the trace norm).

In this paper, we show that many of these separations disappear in the approximate case.  
This means that many of these separations are not robust to small perturbations of the  matrix or tensor in question.
Our analysis applies to matrices and tensors, possibly with an explicit invariance built-in (see below), and holds for a large class of norms. 

More precisely, we work in the framework of $(\Omega,G)$-decompositions,  introduced in  \cite{De19d}. 
Specifically, we consider elements in tensor product spaces --- especially, we consider (positive semidefinite matrices in) the tensor product of the space of $d\times d$ complex matrices $\mathcal{M}_d$, 
$$
\mathcal{M}_d\otimes \cdots \otimes \mathcal{M}_d, 
$$ 
and (nonnegative tensors in) the space 
$$
\mathbb{C}^d \otimes \cdots \otimes \mathbb{C}^d.
$$
Obviously, each such element can be expressed as a sum of elementary tensor factors, but the summation indices can be arranged in many different ways. We describe the summation indices as facets of a \emph{weighted simplicial complex} (wsc).  
In addition, if the element is invariant under a group action $G$, the decomposition explicitly reflects this invariance, giving rise to a  $(\Omega,G)$-decomposition with an associated $(\Omega,G)$-rank. 

The \emph{approximate $(\Omega,G)$-rank} is defined as the minimal rank of an element 
within an $\varepsilon$-ball of the original element. 
The ball is measured with respect to the Schatten $p$-norm for elements in  $\mc{M}_d\otimes \cdots \otimes\mc{M}_d $, 
and  the $\ell_p$-norm for elements in  $\mathbb{C}^d \otimes \cdots \otimes \mathbb{C}^d$. 
Specifically, for positive semidefinite (psd) matrices, we define the approximate versions of the 
$(\Omega,G)$-rank, 
-purification rank, 
-quantum square root rank, 
and -separable rank. 
For nonnegative tensors, we define the approximate versions of the 
$(\Omega,G)$-rank,
-psd rank, 
-square root rank, 
and -nonnegative rank.

Our main result is that many separations between exact $(\Omega,G)$-ranks disappear in the approximate case. Specifically, for psd matrices, we prove that the separations between the operator Schmidt rank, the purification rank, and the separable rank  disappear in the approximate case for Schatten $p$-norms with $p\in (1,4/3] \cup \{2\}\cup [4,\infty)$  (Corollary \ref{cor:sepMat}). 
For nonnegative tensors, we prove that the separation between rank, nonnegative rank, and psd rank disappear in the approximate case  for $\ell_p$-norms with $p>1$ (Corollary \ref{cor:sepTen}). 

Our central tool is an approximate version of Carath\'eodory's Theorem \cite{Iv19}, which allows to upper bound the approximate rank of an element of a convex set in a dimension independent way. We leverage this  result to prove dimension independent  upper bounds for the approximate $(\Omega,G)$-rank, 
-purification rank 
and -separable rank.
It follows that none of these approximate ranks can diverge, and thus the separations disappear. 

One crucial point is that the approximate Carath\'eodory Theorem needs to be applied to the convex hull of a bounded set. 
Separable states are by definition in the convex hull of all product states, and hence the theorem is directly applicable. 
Yet, more general elements (such as entangled states, or not normalised positive semidefinite matrices) have to be normalized by a \emph{gauge function} in order to be studied with this theorem.  
The gauge function essentially says how much the element needs to rescaled in order to be inside the convex hull of normalized elementary tensors. 
Hence, this disappearance of separations holds up to this rescaling. 
We also show that for entangled states this gauge function is related to the robustness of entanglement \cite{Vi99b}.

We also present an algorithm (Algorithm \ref{alg:Ivanov}) to obtain the approximate decompositions, apply it to several examples, and show how it attains the upper bounds proven in Theorem \ref{Thm:ApproxCaratheodory}.

This paper is organised as follows.
In Section \ref{sec:pre} we the present the approximate Carath\'eodory Theorem for Schatten classes.
In Section \ref{sec:exact} we present several notions of (exact) $(\Omega,G)$-decompositions and their associated ranks, which we define more rigorously in Appendix \ref{sec:app_wsc}. 
In Section \ref{sec:approx} we define and study the approximate $(\Omega,G)$-ranks. 
In Section \ref{sec:disap} we show the disappearance of separations for several approximate ranks.
In Section \ref{sec:algo} we present an algorithm to obtain the approximate decomposition. 
Finally, in Section \ref{sec:concl} we present the conclusions and outlook.

\section{The approximate Carath\'eodory Theorem for Schatten classes}
\label{sec:pre}

In this section we introduce the approximate Carath\'eodory Theorem for Schatten classes, which is a crucial tool in this paper.
Throughout this paper, we denote the set of complex $d\times d$ matrices by $\mathcal{M}_d$. 
We also denote  the unnormalised Schatten $p$-norm by $\Vert\cdot\Vert_p$, i.e.\ for $A\in \mathcal{M}_d$ we have
$$\Vert A\Vert_p \coloneqq \trace\left(|A|^p\right)^{1/p} = \left(\sum_{i=1}^d s_i(A)^p\right)^{1/p},$$
where $|A| \coloneqq \sqrt{A^* A}$, and $A^*$ denotes the complex conjugate transpose of $A$. 
Further, $\{s_i(A)\}_{i=1}^{d}$ denotes the set of singular values of $A$.

The main result of this section is Theorem \ref{thm:approxcara}. 
We start with a version of the approximate Carath\'eodory Theorem \cite{Iv19}, which holds for uniformly smooth Banach spaces.

\begin{definition}
Let $X$ with $\Vert\cdot\Vert$ be a Banach space. The \emph{modulus of smoothness} $\rho_X:[0, \infty] \to [0, \infty]$ is given by 
$$
\rho_X(t) \coloneqq \sup\left\{\frac{1}{2}\left(\Vert x+t y\Vert + \Vert x-t y\Vert\right) - 1 : \Vert x\Vert = \Vert y\Vert \leq 1 \right\} \: \text{ for }\: t \in [0, \infty].
$$ 
A Banach space is called \emph{uniformly smooth} if $\rho_X(t) = o(t)$, i.e.\ ${\rho_X(t)/t \to 0}$ as $t \to 0$.
\end{definition}

Note that a finite dimensional Banach space $X$ with $\Vert  \cdot \Vert$ is uniformly smooth if and only if for all $x,y \in X$ with $\Vert x \Vert = \Vert y \Vert = 1$ the limit
$$ \lim_{t \to 0} \frac{\Vert x + ty\Vert - \Vert x \Vert}{t}$$
exists. In other words, for all $x \neq 0$ all directional derivatives of the norm exist \cite{Me98b}. 
Intuitively, this implies that the unit ball of uniformly smooth spaces is smooth. 

Note that $\rho_X$ is convex, increasing and $\rho_X(0) = 0$. This implies that $\rho_X$ is bijective and we denote the inverse function by $\rho_X^{-1}$.

\begin{theorem}[Approximate Carath\'eodory \cite{Iv19}]
\label{thm:approxcaraIva}
Let $S$ be a bounded set in a uniformly smooth Banach space $X$ equipped with the norm $\Vert\cdot\Vert$,
and
$a \in \mathrm{conv}(S)$. Then there exists a sequence $\{x_i\}_{i=1}^{\infty} \subseteq S$ such that for $a_k = \frac{1}{k} \sum_{i=1}^{k} x_i$ the following inequality holds
$$
\Vert a-a_k\Vert \leq \frac{2 \exp(2)}{k \cdot \rho_X^{-1}(1/k)} \cdot \mathrm{diam}(S)
$$
where $\rho_X(\cdot)$ is the modulus of smoothness.
\end{theorem}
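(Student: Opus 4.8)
The plan is to construct the sequence $\{x_i\}$ greedily by a conditional-gradient (Frank--Wolfe) scheme and to control the residual $r_k \coloneqq a_k - a$ through the modulus of smoothness. Since $a \in \conv(S)$, I would first write $a$ as a finite convex combination of points of $S$; this immediately yields the a priori bound $\|x - a\| \le \mathrm{diam}(S)$ for every $x \in S$, and more generally $\|x - a_k\| \le \mathrm{diam}(S)$ whenever $a_k \in \conv(S)$, which is the only place the diameter enters the estimate.

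The engine of the argument is a one-step smoothness inequality. Away from the origin the norm of a uniformly smooth space is Fr\'echet differentiable, so for $r_k \neq 0$ let $f_{r_k}$ be its norming functional, with $\|f_{r_k}\| = 1$ and $\langle f_{r_k}, r_k\rangle = \|r_k\|$. Combining the subgradient inequality with the definition of $\rho_X$ and homogeneity yields, for any increment $v$,
$$
\|r_k + v\| \le \|r_k\| + \langle f_{r_k}, v\rangle + 2\|r_k\|\,\rho_X\!\left(\frac{\|v\|}{\|r_k\|}\right).
$$
Given $a_k$, I would then pick $x_{k+1} \in S$ minimizing the linear functional $\langle f_{r_k}, \cdot\rangle$ over $S$; because $a$ lies in $\conv(S)$ this minimum is at most $\langle f_{r_k}, a\rangle$, so with $v = \frac{1}{k+1}(x_{k+1} - a_k)$, which realizes the update $a_{k+1} = \frac{k}{k+1}a_k + \frac{1}{k+1}x_{k+1}$, one obtains $\langle f_{r_k}, v\rangle \le -\frac{1}{k+1}\|r_k\|$ while $\|v\| \le \frac{1}{k+1}\mathrm{diam}(S)$.

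Substituting these two estimates into the smoothness inequality produces the scalar recursion
$$
\|r_{k+1}\| \le \|r_k\|\left(1 - \frac{1}{k+1}\right) + 2\|r_k\|\,\rho_X\!\left(\frac{\mathrm{diam}(S)}{(k+1)\|r_k\|}\right),
$$
which pits a linear contraction against a smoothness error term. The remaining step is to solve this recursion by induction on $k$. Writing $\delta_k = \|r_k\|/\mathrm{diam}(S)$, the two terms balance when $\rho_X\big(1/((k+1)\delta_k)\big) \approx 1/(k+1)$, that is, exactly when $\delta_k \approx \big((k+1)\,\rho_X^{-1}(1/(k+1))\big)^{-1}$; this is the claimed equilibrium scale, and the constant $2\exp(2)$ emerges from accumulating the factors $\prod_k (1 - \frac{1}{k+1})$ together with the slack left in the induction.

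I expect the recursion analysis in this last step to be the main obstacle. Controlling the nonlinear term $\delta_k\,\rho_X\big(1/((k+1)\delta_k)\big)$ requires exploiting that $\rho_X$ is convex, increasing, and vanishes at $0$, so that $t \mapsto \rho_X(t)/t$ is monotone and $\rho_X^{-1}$ is well behaved, and one must check that the induction hypothesis is preserved across the full range of residual sizes --- both when $\delta_k$ already sits at the equilibrium scale and when it is much larger. Making the induction self-improving with an explicit constant, rather than merely bounded, is the delicate part, and it is where the precise form $\rho_X^{-1}(1/k)$ and the factor $\exp(2)$ are pinned down.
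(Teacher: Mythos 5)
The paper does not prove this theorem at all --- it is imported verbatim from \cite{Iv19} --- so the comparison must be with Ivanov's own argument, and your plan is indeed that argument's skeleton. The parts you wrote out are correct: the one-step inequality $\Vert r_k+v\Vert \le \Vert r_k\Vert + \langle f_{r_k},v\rangle + 2\Vert r_k\Vert\,\rho_X\left(\Vert v\Vert/\Vert r_k\Vert\right)$ follows from the definition of $\rho_X$ together with $\Vert r_k-v\Vert\ge\Vert r_k\Vert-\langle f_{r_k},v\rangle$; the greedy choice is available because $a$ is a finite convex combination of points of $S$ (only $\langle f_{r_k},x_{k+1}-a\rangle\le 0$ is actually needed, which is exactly the condition in Algorithm \ref{alg:Ivanov}); and the recursion you derive is valid. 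Note, incidentally, that your stronger choice (full minimization of the linear functional) buys nothing: setting $u_k\coloneqq k\Vert r_k\Vert/\mathrm{diam}(S)$, both the weak and the strong choice collapse to the same recursion $u_{k+1}\le u_k\left(1+2\rho_X(1/u_k)\right)$, because $s\mapsto s\,\rho_X(c/s)$ is nonincreasing (a consequence of $\rho_X$ being convex with $\rho_X(0)=0$).

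The genuine gap is precisely the step you defer, and it is not a routine verification: the one-step self-improving induction you propose, with moving target $\delta_k\le C/\left(k\rho_X^{-1}(1/k)\right)$, provably cannot be closed from the properties of $\rho_X$ you invoke. First, the right-hand side of your recursion is not monotone in $\delta_k$ (again because $\delta\mapsto\delta\rho_X(c/\delta)$ is nonincreasing), so you may not simply substitute the inductive bound; the problematic regime is \emph{small} $\delta_k$, not large $\delta_k$ as you suggest, and there one must instead cap the smoothness term by $2/(k+1)$ using $\rho_X(t)\le t$. Second, and decisively: when $\delta_k$ sits at the target scale, preserving the hypothesis from $k$ to $k+1$ forces $\rho_X^{-1}(1/k)/\rho_X^{-1}(1/(k+1))\ge 1+c/k$ for a fixed $c>0$ depending on $C$; but convexity and monotonicity of $\rho_X$ only give that this ratio lies in $\left[1,(k+1)/k\right]$, and it can be arbitrarily close to $1$ at a given step (take $\rho_X$ very steep near the relevant point), so no choice of $C$ makes the induction self-improving. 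Ivanov's proof avoids a $k$-by-$k$ induction altogether: fix the final index $m$ and the \emph{fixed} threshold $2/\rho_X^{-1}(1/m)$ for the rescaled residual $u_k$, and consider the last index $j\le m$ at which $u_j$ lies below the threshold; the single crossing step is controlled by $u_j\rho_X(1/u_j)\le 1$ together with the universal bound $\rho_X(t)\ge\sqrt{1+t^2}-1$ (hence $\rho_X^{-1}(1)\le\sqrt 3$), while each of the at most $m$ subsequent steps multiplies $u_k$ by at most $1+1/m$, since above the threshold $\rho_X(1/u_k)\le\tfrac12\rho_X\left(\rho_X^{-1}(1/m)\right)=1/(2m)$; the product $(1+1/m)^m\le e$ and the crossing constant together produce $2\exp(2)$. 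This last-crossing device is the missing idea. Relatedly, your attribution of the constant to the product $\prod_k\left(1-\tfrac{1}{k+1}\right)$ is off: that product telescopes exactly to the averaging weight $1/k$ and contributes no exponential factor.
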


Note that the upper bound of Theorem \ref{thm:approxcaraIva} is independent of $a$, and the only dependence on the space $X$ is given through the diameter of $S$ and the inverse of the modulus of smoothness. We will see that for the special case $X = \mathcal{M}_d$ equipped with the Schatten $p$-norm the upper bound is also dimension independent.

In the following we let $\rho_p(\cdot)$ denote the modulus of smoothness of $\mathcal{M}_d$ with Schatten $p$-norm. In the following we calculate an upper bound for the expression $1/\rho_p^{-1}(1/k)$ following \cite{Iv19,Li63b}.
 A necessary tool for this calculation are \emph{Hanner's inequalities}.

\begin{theorem}[Hanner's inequalities for Schatten norms \cite{Li94}]
\label{thm:hanineq}
Let $A,B \in \mathcal{M}_d$. For $4 \leq p < \infty$ the following inequality holds:
$$
\big(\Vert A\Vert_p + \Vert B\Vert_p \big)^p + 
\big| \Vert A\Vert_p - \Vert B\Vert_p \big|^p 
\geq 
\Vert A+B\Vert_p^p + \Vert A-B\Vert_p^p .
$$
For $1 \leq p \leq 4/3$ the inequality is reversed and for $p = 2$ equality holds.
\end{theorem}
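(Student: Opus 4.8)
The plan is to reduce the two inequalities to the single range $4 \le p < \infty$ and to prove that case by promoting a sharp scalar inequality to the trace via the spectral structure of the matrices. For $p=2$ the norm is Hilbertian, so the parallelogram law gives $\Vert A+B\Vert_2^2 + \Vert A-B\Vert_2^2 = 2\Vert A\Vert_2^2 + 2\Vert B\Vert_2^2$, while the combination $(\Vert A\Vert_2+\Vert B\Vert_2)^2 + (\Vert A\Vert_2-\Vert B\Vert_2)^2$ expands to the same quantity; hence equality. Both inequalities are homogeneous of degree $p$ in $(A,B)$ and symmetric in $A \leftrightarrow B$, so for $p \ge 4$ I may assume $\Vert A\Vert_p = 1 \ge \Vert B\Vert_p =: t$, whereupon the claim becomes
\[
(1+t)^p + (1-t)^p \;\ge\; \Vert A+B\Vert_p^p + \Vert A-B\Vert_p^p .
\]
The reversed inequality for $1 \le p \le 4/3$ I would then obtain from the case $4 \le p' < \infty$ of the H\"older-dual exponent $p' = p/(p-1)$ by the standard Schatten duality $\Vert X\Vert_p = \sup\{\,|\trace(X^*Y)| : \Vert Y\Vert_{p'}\le 1\,\}$: substituting $X = A+B$ and $X = A-B$ converts the forward inequality at $p'$ into the reversed one at $p$, the direction flipping precisely because $p' \ge 4 \Leftrightarrow p \le 4/3$. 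Thus everything reduces to $4 \le p < \infty$.

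For that range I would follow the trace-norm approach of \cite{Li94}. The crucial observation is that if $A$ and $B$ commuted --- equivalently, in the commutative $L^p$ setting --- one could simultaneously diagonalise and the inequality would collapse to the pointwise identity $|a+b|^p + |a-b|^p = (|a|+|b|)^p + \big||a|-|b|\big|^p$, so all the content is noncommutative. To capture it I would write $\Vert X\Vert_p^p = \trace(|X|^p)$ and reduce the matrix inequality to a sharp one-parameter scalar inequality in the singular-value data, which is then lifted back to the trace by a noncommutative argument controlling the off-diagonal, non-commuting contributions.

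The main obstacle --- and the reason the statement excludes the interval $(4/3,4)\setminus\{2\}$ --- is exactly the validity of that pointwise step: verifying the sign of the relevant auxiliary scalar function, through a derivative or Taylor-expansion computation, succeeds only for $p \ge 4$, where the non-commuting correction terms acquire a definite sign, and genuinely fails for $2 < p < 4$, where Hanner's inequality for Schatten norms is strictly stronger than the available uniform-convexity estimates. I therefore expect essentially all the difficulty to sit in establishing this sharp pointwise inequality and pinning down its exact range of validity; granting it, the spectral/trace lifting together with the duality step above yields both inequalities simultaneously.
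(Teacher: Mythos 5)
The first thing to note is that the paper does not prove this statement at all: Theorem \ref{thm:hanineq} is imported verbatim from Ball--Carlen--Lieb \cite{Li94}, so your proposal must stand on its own rather than be measured against an in-paper argument. Your outer scaffolding is sound. The $p=2$ case is indeed the parallelogram law, the homogeneity normalization is harmless, and the duality step is legitimate and can be made rigorous: since $\phi_p(a,b) \coloneqq \big((a+b)^p + |a-b|^p\big)^{1/p}$ is a monotone symmetric norm on $\mathbb{R}^2$, the composite $(A,B)\mapsto \phi_p(\Vert A\Vert_p, \Vert B\Vert_p)$ is a norm on $\mathcal{M}_d\oplus\mathcal{M}_d$ whose trace-pairing dual is the analogous object at the conjugate exponent; unwinding this, together with the change of variables $(A,B)\mapsto\big(\tfrac{A+B}{2},\tfrac{A-B}{2}\big)$, does convert the inequality at $p'\geq 4$ into the reversed inequality at $p\leq 4/3$. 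The bare substitution you describe is not yet that argument, but the mechanism you are pointing at is correct and is how the two ranges are related in the literature.

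The genuine gap is the case $4\leq p<\infty$ itself, which you defer entirely to \cite{Li94}, and the account you give of that proof is wrong in a way that matters. You claim that if $A$ and $B$ commute the inequality ``collapses to the pointwise identity $|a+b|^p+|a-b|^p=(|a|+|b|)^p+\big||a|-|b|\big|^p$'', so that ``all the content is noncommutative''. That identity does hold for real scalars, but it does not trivialize the commutative case, because Hanner's right-hand side involves the norms $\Vert A\Vert_p,\Vert B\Vert_p$ and not the eigenvalue pairs: for $A=\mathrm{diag}(1,0)$, $B=\mathrm{diag}(0,1)$ one gets $\Vert A+B\Vert_p^p+\Vert A-B\Vert_p^p=4$ while $(\Vert A\Vert_p+\Vert B\Vert_p)^p+|\Vert A\Vert_p-\Vert B\Vert_p|^p=2^p$, and these differ for every $p\neq 2$. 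The commutative case is precisely Hanner's classical $\ell_p$ inequality, a nontrivial theorem in its own right --- one that, moreover, holds for \emph{all} $1\leq p<\infty$. Consequently your diagnosis of the range restriction is also misplaced: no sharp scalar inequality ``genuinely fails for $2<p<4$''; the scalar and commutative statements are fine throughout $p\geq 2$. What fails, or at least cannot be established, in $(4/3,4)\setminus\{2\}$ is the noncommutative lifting: the intermediate matrix inequality (linear in $\mathrm{Tr}|A|^p$, $\mathrm{Tr}|B|^p$, $\mathrm{Tr}|A\pm B|^p$, parametrized by an auxiliary variable $r$) that Ball--Carlen--Lieb actually prove for $p\geq 4$ and dually for $p\leq 4/3$. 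Since your proposal neither states nor proves any such lemma, the heart of the theorem is absent; what you have is a correct reduction scaffold wrapped around a citation, together with an incorrect description of what the cited proof does.
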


Note that  Theorem \ref{thm:hanineq} is an extension of the original Hanner's inequalities for $\ell_p$-spaces, i.e.\ spaces $X = \mathbb{C}^d$ equipped with the entrywise $p$-norm \cite{Li63b}.
It is widely believed that Theorem \ref{thm:hanineq} is true for $1 < p < \infty$ as in the $\ell_p$ case,  but it is only proven for the given range of $p$. 

\begin{corollary}
The following inequalities hold:
	\[ \rho_{p}(t) \leq \left\{ \begin{array}{ll}
	\frac{1}{p} \cdot t^p &\quad\text{if }1 \leq p \leq 4/3  \\
	\\
	\frac{p-1}{2} \cdot t^2 &\quad\text{if }	p=2  \text{ or } 4 \leq p < \infty
	\end{array}
	\right.\]
	This implies that for $1 < p \leq 4/3$, $p = 2$ and $4 \leq p < \infty$, $\mathcal{M}_d$ equipped with the Schatten $p$-norm is uniformly smooth.
Furthermore, $\mathcal{M}_d$ with the Schatten $1$-norm is not uniformly smooth.
\end{corollary}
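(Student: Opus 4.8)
The plan is to bound $\rho_p(t)$ directly from Hanner's inequalities (Theorem~\ref{thm:hanineq}), reducing everything to scalar estimates. First I would observe that in the supremum defining $\rho_p$ it suffices to take $\Vert x\Vert_p = \Vert y\Vert_p = 1$: if $\Vert x\Vert_p = \Vert y\Vert_p = s$ with $0<s\le 1$, then writing $x' = x/s$, $y'=y/s$ and $Q \coloneqq \tfrac12(\Vert x'+ty'\Vert_p + \Vert x'-ty'\Vert_p)$, the triangle inequality gives $Q \ge \tfrac12\Vert 2x'\Vert_p = 1$, whence $sQ - 1 \le Q-1$; so the normalized vectors already realize the supremum (and $s=0$ is trivial).

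For the range $p=2$ and $4\le p<\infty$, I would apply Hanner's inequality with $A = x$, $B = ty$ (so $A+B = x+ty$, $A-B = x-ty$, $\Vert A\Vert_p = 1$, $\Vert B\Vert_p = t$), obtaining $\Vert x+ty\Vert_p^p + \Vert x-ty\Vert_p^p \le (1+t)^p + |1-t|^p$. The power-mean inequality $\tfrac12(a+b) \le \big(\tfrac12(a^p+b^p)\big)^{1/p}$, valid for $p\ge1$, then yields
$$\tfrac12\big(\Vert x+ty\Vert_p + \Vert x-ty\Vert_p\big) \le \left(\frac{(1+t)^p + |1-t|^p}{2}\right)^{1/p},$$
so it remains to prove the scalar inequality $\big(\tfrac12((1+t)^p + |1-t|^p)\big)^{1/p} \le 1 + \tfrac{p-1}{2}t^2$. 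For $p=2$ the left side equals $(1+t^2)^{1/2} \le 1 + \tfrac12 t^2$ exactly. For $p\ge4$ I would invoke the classical two-point inequality $\big(\tfrac12((1+t)^p+|1-t|^p)\big)^{1/p} \le (1+(p-1)t^2)^{1/2}$ and then use $\sqrt{1+u}\le 1+u/2$. This scalar estimate is the main obstacle: unlike the surrounding steps it is not a one-line manipulation but rests on the sharp Ball--Carlen--Lieb comparison (equivalently a Taylor-coefficient comparison of both sides, cf.\ \cite{Li94}); moreover the reason the argument does not extend to all $p>1$ is precisely that only the $p\ge4$ direction of Hanner's inequality is available.

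For $1\le p\le 4/3$, Hanner's inequality is reversed, so I would instead set $A = x+ty$, $B = x-ty$, giving $A+B = 2x$ and $A-B = 2ty$. The reversed inequality reads $(\Vert x+ty\Vert_p + \Vert x-ty\Vert_p)^p + |\cdots|^p \le \Vert 2x\Vert_p^p + \Vert 2ty\Vert_p^p = 2^p(1+t^p)$. Dropping the nonnegative second term on the left and taking $p$-th roots gives $\tfrac12(\Vert x+ty\Vert_p + \Vert x-ty\Vert_p) \le (1+t^p)^{1/p}$, and Bernoulli's inequality $(1+s)^{1/p} \le 1 + \tfrac1p s$ (for $p\ge1$, $s\ge0$) with $s=t^p$ yields $\rho_p(t) \le \tfrac1p t^p$.

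Finally, the smoothness claims follow by inspecting $\rho_p(t)/t$: for $1<p\le4/3$ one has $\rho_p(t)/t \le \tfrac1p t^{p-1}\to 0$, and for $p=2$ or $4\le p<\infty$ one has $\rho_p(t)/t \le \tfrac{p-1}{2}t\to 0$, so all these spaces are uniformly smooth. For $p=1$ I would exhibit an explicit failure: taking $x = \diag(1,0,\dots,0)$ and $y = \diag(0,1,0,\dots,0)$ gives $\Vert x+ty\Vert_1 = \Vert x-ty\Vert_1 = 1+t$, hence $\rho_1(t) \ge t$ and $\rho_1(t)/t \ge 1 \not\to 0$, so $\mathcal{M}_d$ with the Schatten $1$-norm is not uniformly smooth.
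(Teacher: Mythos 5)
Your proof is correct and follows essentially the same route as the paper: the reversed Hanner inequality with $A = X+tY$, $B = X-tY$ plus Bernoulli for $1\le p\le 4/3$, and the power-mean inequality plus Hanner with $A=X$, $B=tY$ for $p=2$ and $p\ge 4$. The only differences are that you supply details the paper leaves implicit --- the reduction to unit vectors, the scalar two-point estimate $\bigl(\tfrac12((1+t)^p+|1-t|^p)\bigr)^{1/p}\le (1+(p-1)t^2)^{1/2}$ behind the paper's ``basic calculation,'' and an explicit diagonal counterexample for $p=1$ where the paper cites \cite{Li63b} --- all of which are valid.
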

\begin{proof}
Let $1 \leq p \leq 4/3$. Applying Hanner's inequality for $A = X+ t Y$ and $B = X- t Y$ we  obtain
\begin{align*}
	\rho_{p}(t) &= \sup\left\{\frac{1}{2}(\Vert X+t Y\Vert_p + \Vert X- t Y\Vert_p) - 1 : \Vert X \Vert_p = \Vert Y \Vert_p = 1 \right\}  \\
	&\leq (1 + t^p)^{1/p} - 1 \leq \frac{t^p}{p}.
\end{align*}
where the last inequality can be obtained by a basic calculation. 

Let $p = 2$ or $4 \leq p < \infty$. Since $x \mapsto |x|^p$ is convex we have for $a,b \geq 0$
$$ 
(a+b)^p \leq 2^{p-1} \cdot (a^p + b^p). 
$$
Using this fact and Hanner's inequality for $A \coloneqq X$ and $B \coloneqq tY$ we obtain
\begin{align*}
\rho_{p}(t) &= \sup\left\{\frac{1}{2}( \Vert X+ t Y\Vert_p + \Vert X-t Y\Vert_p) - 1 : \Vert X\Vert_p = \Vert Y\Vert_p = 1 \right\}   \\
	&\leq \sup\left\{ \left(\frac{\Vert X+tY\Vert_p^p + \Vert X-tY\Vert_p^p}{2}\right)^{1/p} - 1 : \Vert X\Vert_p = \Vert Y\Vert_p = 1 \right\}  \\
	&\leq \left( \frac{(1+t)^p + |1-t|^p}{2} \right)^{1/p} - 1 \leq \frac{p-1}{2} \cdot t^2.
\end{align*}

To prove that the Schatten $1$-class is not uniformly smooth we refer to the $\ell_p$-case (see \cite{Li63b} for details).
\end{proof}

Using the fact that the modulus of smoothness is increasing with $t$, we can formulate the approximate Carath\'eodory Theorem for several Schatten classes.

\begin{theorem}[Approximate Carath\'eodory for Schatten classes]
\label{Thm:ApproxCaratheodory}
Let $\mathcal{S}$ be a bounded set in $\mathcal{M}_d$ equipped with Schatten $p$-norm, where $1 < p \leq 4/3$, $p=2$ or $4 \leq p < \infty$, and $A \in \mathrm{conv}(\mathcal{S})$.
Then there exists a sequence $\{X_i\}_{i=1}^{\infty} \subseteq \mathcal{S}$ such that for ${A_k = \frac{1}{k} \sum_{i=1}^{k} X_i}$ the following inequalities hold:
\begin{enumerate}[label=(\alph*)]
	\item $\displaystyle \Vert A-A_k\Vert_p \leq \frac{2\exp(2)}{p^{1/p}} \cdot k^{1/p - 1} \cdot \mathrm{diam}(\mathcal{S}) \quad \:\text{ if } 1 < p \leq 4/3$
	\item $\displaystyle \Vert A-A_k\Vert_p \leq \exp(2) \cdot \sqrt{\frac{2(p-1)}{k}} \cdot \mathrm{diam}(\mathcal{S}) \: \: \text{if } p=2 \text{ or } 4 \leq p < \infty$
\end{enumerate}
\end{theorem}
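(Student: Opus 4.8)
The plan is to combine the general approximate Carathéodory Theorem (Theorem~\ref{thm:approxcaraIva}) with the explicit upper bounds on the modulus of smoothness $\rho_p$ just established in the Corollary. The only space-dependent quantities in the bound of Theorem~\ref{thm:approxcaraIva} are $\mathrm{diam}(\mathcal{S})$ and the factor $1/\big(k\cdot\rho_X^{-1}(1/k)\big)$. Since we already control $\rho_p$ from above, the entire argument reduces to converting an upper bound on $\rho_p$ into an upper bound on $1/\rho_p^{-1}(1/k)$, and then substituting into the general inequality. I would therefore carry out the following three steps.

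\emph{Step 1 (invert the modulus-of-smoothness bound).} The Corollary gives $\rho_p(t)\le \tfrac{1}{p}t^p$ for $1<p\le 4/3$ and $\rho_p(t)\le \tfrac{p-1}{2}t^2$ for $p=2$ or $4\le p<\infty$. Because $\rho_p$ is increasing (as remarked after the definition), an upper bound $\rho_p(t)\le g(t)$ for an increasing $g$ yields the comparison $\rho_p^{-1}(s)\ge g^{-1}(s)$ for the inverse functions, hence $1/\rho_p^{-1}(s)\le 1/g^{-1}(s)$. Applying this with $s=1/k$: in the first regime $g(t)=t^p/p$ gives $g^{-1}(1/k)=(p/k)^{1/p}$, so $1/\rho_p^{-1}(1/k)\le (k/p)^{1/p}$; in the second regime $g(t)=\tfrac{p-1}{2}t^2$ gives $g^{-1}(1/k)=\big(\tfrac{2}{(p-1)k}\big)^{1/2}$, so $1/\rho_p^{-1}(1/k)\le\big(\tfrac{(p-1)k}{2}\big)^{1/2}$.

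\emph{Step 2 (substitute into the general theorem).} Feed these estimates into the factor $\frac{2\exp(2)}{k\cdot\rho_p^{-1}(1/k)}$ from Theorem~\ref{thm:approxcaraIva}. For $1<p\le 4/3$ this becomes
\begin{align*}
\frac{2\exp(2)}{k}\cdot\frac{1}{\rho_p^{-1}(1/k)}
&\le \frac{2\exp(2)}{k}\cdot\Big(\frac{k}{p}\Big)^{1/p}
= \frac{2\exp(2)}{p^{1/p}}\cdot k^{1/p-1},
\end{align*}
which is exactly bound~(a). For $p=2$ or $4\le p<\infty$ it becomes
\begin{align*}
\frac{2\exp(2)}{k}\cdot\Big(\frac{(p-1)k}{2}\Big)^{1/2}
= 2\exp(2)\cdot\sqrt{\frac{p-1}{2k}}
= \exp(2)\cdot\sqrt{\frac{2(p-1)}{k}},
\end{align*}
which is exactly bound~(b). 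Multiplying through by $\mathrm{diam}(\mathcal{S})$ and recalling that the sequence $\{X_i\}$ with averages $A_k$ is supplied by Theorem~\ref{thm:approxcaraIva} completes the proof. One must note that the Corollary's hypothesis requires $1<p$ (the endpoint $p=1$ is excluded precisely because $\mathcal{M}_d$ with the trace norm fails to be uniformly smooth), which matches the range stated here.

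\emph{The main obstacle} is modest and purely technical: one must justify the inversion step rigorously, i.e.\ that dominating $\rho_p$ by an increasing elementary function legitimately controls $1/\rho_p^{-1}(1/k)$. This uses only that $\rho_p$ is increasing and bijective (already recorded in the excerpt), so no genuine difficulty arises. The conceptual work has been front-loaded into the Corollary via Hanner's inequalities; the present theorem is essentially the bookkeeping that turns those smoothness estimates into the dimension-independent convergence rates, and the key point worth emphasising is that the resulting constants depend only on $p$ and not on the ambient dimension $d$.
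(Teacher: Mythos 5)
Your proposal is correct and follows essentially the same route as the paper's proof: invert the modulus-of-smoothness bounds from the Corollary (using that $\rho_p$ is increasing) to obtain $\rho_p^{-1}(1/k) \geq (p/k)^{1/p}$ resp.\ $\rho_p^{-1}(1/k) \geq \sqrt{2/(k(p-1))}$, then substitute into Theorem~\ref{thm:approxcaraIva}. The paper states the inversion step without elaboration, whereas you justify it explicitly; the computations and resulting constants agree exactly.
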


\begin{proof}
Using the fact that $\rho_p$ is increasing in its argument, 
and Theorem \ref{thm:approxcaraIva}, we obtain 
$$
\rho_p^{-1}(1/k) \geq \left\{ \begin{array}{ll}\sqrt[p]{\frac{p}{k}} & \text{if } 1 < p \leq 4/3\\
\\
\sqrt{\frac{2}{k \cdot (p-1)}} & \text{if } p=2 \text{ or } 4 \leq p < \infty \\
\end{array}\right.
$$ 
This proves the statement. 
\end{proof}

By fixing some approximation error $\varepsilon > 0$ and $p$, the previous result can be equivalently formulated as an upper bound on the number of summands $k$ necessary to attain an $\varepsilon$-approximation with respect to the Schatten $p$-norm.

\begin{theorem}\label{thm:approxcara}
Let $\mathcal{S} \subseteq \mathcal{M}_d$ bounded, $A\in \textrm{conv}(\mathcal{S})$ and $\epsilon>0$ be given. 
Then in the $\epsilon$-ball with respect to the Schatten $p$-norm around $A$ there is a point $B$ which is a convex combination of at most 
\begin{enumerate}[label=(\alph*)]
	\item $ \displaystyle
\left \lceil C_p \cdot \left(\frac{\textrm{diam}(\mathcal{S})}{\epsilon}\right)^{\frac{p}{p-1}}\right \rceil \quad \:\: \text{if } 1 < p \leq 4/3$
	\item $  \displaystyle
\left \lceil D_p \cdot \left(\frac{\textrm{diam}(\mathcal{S})}{\epsilon}\right)^{2}\right \rceil \quad \quad \text{ if } p = 2  \text{ or } 4 \leq p < \infty
$
\end{enumerate}
points from $S$, where
$$C_p \coloneqq \left(\frac{2 \exp(2)}{p^{1/p}}\right)^{\frac{p}{p-1}} \quad \text{ and } \quad D_p \coloneqq 2 (p-1) \cdot \exp(4).$$
\end{theorem}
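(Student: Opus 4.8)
The plan is to derive this statement directly from Theorem \ref{Thm:ApproxCaratheodory} by reformulating its conclusion: instead of controlling the approximation error as a function of the number of summands $k$, I would fix the desired error $\epsilon$ and solve for the smallest admissible $k$. Since $A \in \mathrm{conv}(\mathcal{S})$ and the hypotheses on $p$ match exactly, Theorem \ref{Thm:ApproxCaratheodory} gives a sequence $\{X_i\}_{i=1}^\infty \subseteq \mathcal{S}$ with $A_k = \frac{1}{k}\sum_{i=1}^k X_i$ satisfying the stated error bounds. The point $B$ in the statement will simply be $A_k$ for a suitable $k$, which is by construction a convex combination (with equal weights $1/k$) of $k$ points from $\mathcal{S}$, so the ``convex combination of at most $k$ points'' conclusion is immediate once $k$ is chosen.

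First I would treat the case $1 < p \leq 4/3$. Here part (a) of Theorem \ref{Thm:ApproxCaratheodory} yields $\Vert A - A_k \Vert_p \leq \frac{2\exp(2)}{p^{1/p}} \cdot k^{1/p-1} \cdot \mathrm{diam}(\mathcal{S})$. I would impose that the right-hand side be at most $\epsilon$ and solve for $k$. Writing $\gamma \coloneqq 2\exp(2)/p^{1/p}$ and noting that $1/p - 1 = -(p-1)/p < 0$, the inequality $\gamma \cdot k^{-(p-1)/p} \cdot \mathrm{diam}(\mathcal{S}) \leq \epsilon$ rearranges to $k^{(p-1)/p} \geq \gamma \cdot \mathrm{diam}(\mathcal{S})/\epsilon$, hence to
\[
k \geq \left(\gamma \cdot \frac{\mathrm{diam}(\mathcal{S})}{\epsilon}\right)^{p/(p-1)} = \gamma^{p/(p-1)} \left(\frac{\mathrm{diam}(\mathcal{S})}{\epsilon}\right)^{p/(p-1)} = C_p \left(\frac{\mathrm{diam}(\mathcal{S})}{\epsilon}\right)^{p/(p-1)},
\]
since $\gamma^{p/(p-1)} = (2\exp(2)/p^{1/p})^{p/(p-1)} = C_p$ by the definition of $C_p$. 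Taking $k$ to be the ceiling of the right-hand side produces an integer satisfying the bound, and the corresponding $A_k$ lies in the $\epsilon$-ball, giving part (a).

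The case $p = 2$ or $4 \leq p < \infty$ is analogous using part (b): from $\Vert A - A_k \Vert_p \leq \exp(2)\sqrt{2(p-1)/k} \cdot \mathrm{diam}(\mathcal{S})$, requiring the bound to be at most $\epsilon$ and squaring gives $\exp(4) \cdot 2(p-1)/k \cdot \mathrm{diam}(\mathcal{S})^2 \leq \epsilon^2$, i.e.\ $k \geq 2(p-1)\exp(4)\cdot (\mathrm{diam}(\mathcal{S})/\epsilon)^2 = D_p (\mathrm{diam}(\mathcal{S})/\epsilon)^2$, again matching the stated constant $D_p$, so the ceiling of this quantity works. The whole argument is essentially an algebraic inversion of the two bounds in Theorem \ref{Thm:ApproxCaratheodory}, so there is no serious obstacle; the only point requiring mild care is the exponent bookkeeping in the $1 < p \leq 4/3$ case, namely verifying that raising both sides to the power $p/(p-1)$ correctly converts the constant $\gamma$ into $C_p$ and produces the exponent $p/(p-1)$ on $\mathrm{diam}(\mathcal{S})/\epsilon$, together with the observation that because $k^{1/p-1}$ is decreasing in $k$, any integer $k$ at least as large as the computed threshold still satisfies the error bound, which justifies the ceiling.
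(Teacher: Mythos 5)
Your proposal is correct and follows exactly the paper's approach: the paper's proof of Theorem \ref{thm:approxcara} is literally ``Follows directly from Theorem \ref{Thm:ApproxCaratheodory},'' and your argument is just the explicit algebraic inversion (solving the error bounds of that theorem for $k$) that the paper leaves to the reader. The exponent and constant bookkeeping in both cases checks out, so nothing further is needed.
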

\begin{proof}
Follows directly from Theorem \ref{Thm:ApproxCaratheodory}. 
\end{proof}

Note that in both cases (a) and (b) of Theorem \ref{thm:approxcara} the upper bound is dimension independent. The upper bound is best for $p = 2$, and diverges for fixed $\varepsilon > 0$ if we approach $p = 1$. 
The bounds also diverge in the limit $\varepsilon \to 0$ for arbitrary $p$. 
Hence, the approximation error needs to be fixed in order to apply this result. 

\begin{remark}
\label{rem:lp-approxCara}
If we assume that $\mathcal{M}_d$ is equipped with the $\ell_p$-norm $\Vert \cdot \Vert_{\ell_p}$, i.e.\ for $A \in \mathcal{M}_d$ we have
$$ \Vert A \Vert_{\ell_p} \coloneqq \left(\sum_{i,j=1}^{d} |A_{ij}|^p \right)^{1/p},$$
then the upper bounds of Theorem \ref{Thm:ApproxCaratheodory} and Theorem \ref{thm:approxcara} hold for $1 < p \leq 2$ and $p \geq 2$, instead of $1 < p \leq 4/3$ and $4 \leq p < \infty$, respectively. This is due to the fact that Hanner's inequalities are proven for $1 < p < \infty$ in $\ell_p$-spaces \cite{Li63b}. \demo
\end{remark}

\section{Notions of $(\Omega,G)$-decompositions and ranks}
\label{sec:exact}

In this section we present the relevant notions of tensor decompositions based on weighted simplicial complexes  introduced in \cite{De19d}. We will  give a brief overview of the framework of \emph{$(\Omega,G)$-decompositions} and their corresponding \emph{$(\Omega,G)$-ranks} (Section \ref{sec:exdec}), 
as well as their application to psd matrices (Section \ref{sec:psd-ranks}) 
and  nonnegative tensors (Section \ref{sec:tensorranks}).
The technical parts of this section are not essential for the main results of this paper, which are presented in Section \ref{sec:approx}. 
Only the definitions of exact $(\Omega,G)$-ranks will be necessary for the subsequent definitions of approximate $(\Omega,G)$-ranks. 

\subsection{Exact decompositions}
\label{sec:exdec}

The goal of this section is to explain the notions of weighted simplicial complexes (wsc) $\Omega$ and group actions $G$ on $\Omega$ to finally define an $(\Omega,G)$-decomposition (Definition \ref{def:omegaG-dec}). 
In order to motivate them, we will first give an intuitive explanation of these concepts and then illustrate them with Example \ref{ex:decompositions1}.
For a rigorous definition of wsc and group action we refer to Appendix \ref{sec:app_wsc} and \cite{De19d}.

For each index $i \in [n] \coloneqq \{0, \ldots, n\}$, we fix a $\mathbb{C}$-vector space $\mathcal{V}_i$ (called the \emph{local vector space}).
We denote the \emph{global vector space} as the tensor product space
$$ \mathcal{V} \coloneqq \mathcal{V}_0 \otimes \cdots \otimes \mathcal{V}_n.$$
By definition every $v \in \mathcal{V}$ can be expressed as a sum of elementary tensors
$$
v^{[0]} \otimes \cdots \otimes v^{[n]}.
$$
The different ways of arranging the summation indices will be reflected in the wsc --- specifically, the summation indices will be associated to the facets of the wsc.
A weighted simplicial complex is intuitively a `well-formed' multi-hypergraph. 
A multi-hypergraph is generalization of a graph where connections of possibly more than two vertices are allowed, thus giving rise to a hypergraph. In addition, every facet can be repeated, i.e.\ there can be multiple copies thereof --- this gives rise to a \emph{multi}-hypergraph.

Furthermore, if $v$ is invariant with respect to permutations of the indices $[n]$ of  elementary tensors, we are interested in obtaining explicitly invariant decompositions, i.e.\ decompositions whose elementary tensors themselves are invariant under the permutation of the indices $[n]$. 
Thus, for a given group action $G$ on the set $[n]$, we consider the induced linear group action on $\mathcal{V}$, i.e.\
$$ g: v^{[0]} \otimes \cdots \otimes v^{[n]} \mapsto v^{[g0]} \otimes \cdots \otimes v^{[gn]}$$
for $g \in G$. An element $v \in \mathcal{V}$ is called \emph{$G$-invariant} if it is invariant under the action of $G$ on $\mathcal V$. The subspace of invariant elements is denoted $\mathcal{V}_\textrm{inv}$.

\begin{example}
\label{ex:decompositions1}
\begin{enumerate}[label=(\roman*), wide=\parindent, labelwidth=!]
	\item Consider a decomposition of the form
	$$v = \sum_{\alpha = 1}^{r} v_{\alpha}^{[0]} \otimes \cdots \otimes v_{\alpha}^{[n]}.$$
	The minimal number $r$ of elementary tensors is called the \emph{tensor rank} of $v$.
	Setting $\mathcal{I} = \{1, \ldots, r\}$ and $\mathcal{F} = \{\{0, \ldots, n\}\}$ we can equivalently write the decomposition as a sum over all functions $\alpha: \mathcal{F} \to \mathcal{I}$, denoted $\alpha \in \mathcal{I}^{\mathcal{F}}$, i.e.\
	\begin{equation}
	 \label{eq:exdec}
	v = \sum_{\alpha \in \mathcal{I}^{\mathcal{F}}} v_{\alpha_{|_0}}^{[0]} \otimes \cdots \otimes v_{\alpha_{|_n}}^{[n]},
	\end{equation}
	where $\alpha_{|_i}$ denotes the restriction of $\alpha$ to the set $\mathcal{F}_i = \{F \in \mathcal{F}: i \in F\}$. Intuitively, every element $F \in \mathcal{F}$ corresponds to a summation index $\alpha_i$, and the elements of $F$ correspond to the positions where $\alpha_i$ appears.
	\item Let $G$ be a transitive group action on $[n]$, i.e.\ there is only one orbit, and hence $Gi = [n]$ for all $i \in [n]$. 
	For elements $v$ which are invariant under the group action $G$, we consider the $G$-invariant decomposition
	$$v = \sum_{\alpha = 1}^{r} v_{\alpha} \otimes \cdots \otimes v_{\alpha}$$
	which is called the \emph{symmetric tensor decomposition}.
	The smallest number $r$ among all possible decompositions is called the \emph{symmetric tensor rank} (see Example \ref{ex:wsc} (i) for an explicit example with $n=2$).
	\item Consider a decomposition of the form
	$$v = \sum_{\alpha_0, \ldots, \alpha_{n-1} = 1}^{r} v_{\alpha_0}^{[0]} \otimes v_{\alpha_0, \alpha_1}^{[1]} \otimes \cdots \otimes v_{\alpha_{n-2}, \alpha_{n-1}}^{[n-1]}\otimes v_{\alpha_{n-1}}^{[n]}.
	$$
	This decomposition is called the \emph{matrix product operator form} and the minimal $r$ among all possible decompositions the \emph{operator Schmidt rank}. 
	This corresponds to the decomposition of Equation \eqref{eq:exdec} with $$\mathcal{F} = \{\{0,1\}, \{1,2\}, \ldots, \{n-1, n\}\}$$ and ${\mathcal{I} = \{1, \ldots, r\}}$.
 	\item Consider a decomposition of the form
 	$$v = \sum_{\alpha_0, \ldots, \alpha_{n-1}=1}^{r} v_{\alpha_0, \alpha_1}^{[0]} \otimes v_{\alpha_1, \alpha_2}^{[1]} \otimes \cdots \otimes v_{\alpha_{n-2}, \alpha_{n-1}}^{[n-2]}\otimes v_{\alpha_{n-1}, \alpha_0}^{[n-1]}.$$
	This is similar to (iii), but with periodic boundary conditions. 
	This corresponds to choosing
 	$$\mathcal{F} = \{\{i, i+1\}: i \in \{0, \ldots, n-2\}\} \cup \{\{n-1, 0\}\} $$ 
	in  Equation \eqref{eq:exdec}.
	
	Additionally considering the symmetry operation given by the cyclic group ${G = C_n}$
	(i.e.\ the group generated by a mapping $i \mapsto i+1$, where addition is modulo $n$) we obtain the \emph{translational invariant matrix product operator form} \cite{De19}
	$$v = \sum_{\alpha_0, \ldots, \alpha_{n-1} = 1}^{r} v_{\alpha_0, \alpha_1} \otimes v_{\alpha_1, \alpha_2} \otimes \cdots \otimes v_{\alpha_{n-2}, \alpha_{n-1}}\otimes v_{\alpha_{n-1}, \alpha_0}.$$
\end{enumerate}
\demo
\end{example}

Example \ref{ex:decompositions1} shows a unified framework for various decompositions. 
In the following, we give examples of simplicial complexes, and relate their facets to the  sets $\mathcal{F}$ constructed in Example \ref{ex:decompositions1}. Since facets can appear multiple times we will denote the multiset of all facets by $\widetilde{\mathcal{F}}$.

\begin{example}\label{ex:wsc}
(i) The simplicial complex (sc) $\Sigma_n$ which contains all possible connections between vertices is called the \emph{$n$-simplex}. For $n=2$ this can be depicted as follows:
\bigskip
\begin{center}
\begin{tikzpicture}
\filldraw (-1,0) circle (2pt);
\filldraw (1,0) circle (2pt);
\filldraw (0,1.5) circle (2pt);
\draw[thick] (-1,0) -- (1,0) -- (0,1.5)-- cycle;
\filldraw[opacity=0.4] (-1,0) -- (1,0) -- (0,1.5);
\put (-40,-5) {$0$};
\put (35,-5) {$1$};
\put (-2,48) {$2$};
\end{tikzpicture}
\end{center}

\bigskip\noindent
Note that the gray area shows the facet which connects all three vertices.
$\Sigma_n$ has only one (multi-)facet, i.e.\ $\mathcal F=\widetilde{\mathcal F}=\{[n]\}$ and hence gives rise to a tensor decomposition shown in (i) or (ii) of Example \ref{ex:decompositions1}.

(ii) For $n\geq 1$, the \emph{line of length $n$} (i.e.\ composed of $n+1$ points) is the sc $\Lambda_n$ corresponding to the following graph:

\bigskip
\begin{center}
\begin{tikzpicture}
\filldraw (-2,0) circle (2pt);\put (-60,-15){$0$};
\filldraw (-1,0) circle (2pt);\put (-31,-15){$1$};
\filldraw (0,0) circle (2pt);\put (-2,-15){$2$};
\put (20,-2.5) {$\cdots$};
\filldraw (2,0) circle (2pt);\put (55,-15){$n$};
\draw[thick] (-2,0) -- (-1,0);
\draw[thick] (-1,0) -- (0,0);
\draw[thick] (0,0) -- (0.4,0);
\draw[thick] (1.5,0) -- (2,0);
\end{tikzpicture}
\end{center}

\bigskip\noindent The set $\mathcal F=\widetilde{\mathcal F}$ has $n$ elements and generates a matrix product operator form given in (iii) of Example \ref{ex:decompositions1}. 
Intuitively the sc describes the connections between the local vector spaces through shared summation indices.

(iii) For $n\geq 3$, the \emph{circle of length $n$} is the sc $\Theta_n$  corresponding to  the following graph:

\bigskip
\begin{center}
\begin{tikzpicture}
\filldraw (-1,0) circle (2pt);\put (-40,-3) {$0$};
\filldraw (-0.707,0.707) circle (2pt);\put (-30,20) {$1$};
\filldraw (0,1) circle (2pt);\put (-3,35) {$2$};
\filldraw (0.707,0.707) circle (2pt);\put (30,20) {$3$};
\filldraw (1,0) circle (2pt);\put (40,-3) {$4$};
\filldraw (0,-1) circle (2pt);\put (-13,-38) {$n-2$};
\filldraw (-0.707,-0.707) circle (2pt);\put (-50,-23) {$n-1$};
\draw[thick](-1,0)--(-0.707,0.707) -- (0,1)--(0.707,0.707) -- (1,0)-- (0.9,-0.5);
\draw[thick](0.4,-0.9)--  (0,-1)-- (-0.707,-0.707)--(-1,0) ;
\put(13,-23) {\rotatebox[origin=c]{35}{$\cdots$}};
\end{tikzpicture}
\end{center}

\bigskip\noindent It has $n$ facets and generates the decomposition given in Example \ref{ex:decompositions1} (iv).
\demo
\end{example}

A group action of $G$ on the wsc $\Omega$ is a natural extension of the group action introduced at the beginning of Section \ref{sec:exdec}. It is defined on the power set of $[n]$ and maps subsets
$$
[n] \supseteq \{i_1, i_2, \ldots, i_k\} \mapsto \{gi_1, gi_2, \ldots, gi_k\}.
$$
Additionally it respects the structure of the wsc, i.e.\ facets in $\mathcal{\widetilde{F}}$ are mapped to facets in $\mathcal{\widetilde{F}}$.
For a rigorous definition of this notion we refer to Appendix \ref{sec:app_wsc} and \cite{De19d}; for examples we refer to Example \ref{ex:decompositions1}.
Throughout this paper, we assume that every group action $G$ is a valid 
group action on $\Omega$.

Furthermore, we call a group action $G$ on $\Omega$ \emph{free} if for all $F \in \mathcal{\widetilde{F}}$ the only element $g \in G$ mapping $F$ to itself is the neutral element.

Finally, we define the notion of $(\Omega,G)$-decomposition and the $(\Omega,G)$-rank.
 For two sets $X,Y$, the set $Y^X$ contains, by definition, all functions $f:X \to Y$. For any set $\mathcal{I}$, any $\alpha \in \mathcal{I}^{\widetilde{\mathcal{F}}}$ and any $i \in \mathcal{I}$, we  call the restriction to $\widetilde{\mathcal{F}}_i$
$$\alpha_{|_{\widetilde{\mathcal{F}}_i}} \in \mathcal{I}^{\widetilde{\mathcal{F}}_i}$$
the \emph{restriction of $\alpha$ to vertex $i$}, and write $\mathcal{\alpha}_{|i}$ instead.

\begin{definition}
\label{def:omegaG-dec}
For $v \in \mathcal{V}$, an \emph{$(\Omega,G)$-decomposition} is given by a finite index set $\mathcal{I}$ and families
$$ V^{[i]} \coloneqq \left(v_{\beta}^{[i]}\right)_{\beta \in \mathcal{I}^{\widetilde{\mathcal{F}}_i}},$$
where $v_{\beta}^{[i]} \in \mathcal{V}_i$ for $i \in [n]$ such that:
\begin{enumerate}[label=(\alph*)]
	\item We have
	\begin{equation}\label{eq:OmegaG-dec}
	v = \sum_{\alpha \in \mathcal{I}^{\widetilde{\mathcal{F}}}} v_{\alpha_{|0}}^{[0]} \otimes v_{\alpha_{|1}}^{[1]} \otimes \cdots \otimes v_{\alpha_{|n}}^{[n]}.
	\end{equation}
	\item For all $i \in [n]$, $g \in G$ and $\beta \in \mathcal{I}^{\widetilde{\mathcal{F}}_i}$ it holds
	$$ v_\beta^{[i]} = v_{{}^g\beta}^{[gi]},$$
	where ${}^g \beta \in \mathcal{I}^{\widetilde{\mathcal{F}}_{gi}}$ and ${}^g\beta(F) \coloneqq \beta(g^{-1}F)$ for $F \in \widetilde{\mathcal{F}}_{gi}$.
\end{enumerate}
The smallest cardinality of $\mathcal{I}$ among all possible $(\Omega,G)$-decompositions of $v$ is called the \emph{$(\Omega,G)$-rank of $v$}, denoted 
$$\emph{\textrm{rank}}_{(\Omega,G)}(v).$$
If no $(\Omega,G)$-decomposition of $v$ exists, we set $\textrm{rank}_{(\Omega,G)}(v) \coloneqq \infty$.
\end{definition}

Note that if $G$ is a free group action on a connected wsc $\Omega$, and $v \in \mathcal{V}_\text{inv}$, there always exists an $(\Omega,G)$-decomposition of $v$  \cite[Thm. 13]{De19d}. 

For simplicity, if $G$ is the trivial group, we will call an $(\Omega,G)$-decom\-position just an $\Omega$-decomposition, and write $\textrm{rank}_{\Omega}$ for the rank. 
The same simplification will also be used for all ranks defined in the following two subsections.

\subsection{Exact ranks for psd matrices}
\label{sec:psd-ranks}
Separability (or its negation, entanglement), and purifications are central notions in quantum information theory. In the next two definitions we will formulate these notions in the framework of $(\Omega,G)$-decompositions.

Throughout this section we will fix a connected wsc $\Omega$ together with a free group action $G$. 
We will also assume that the local vector space is given by
$$ \mathcal{V}_i \coloneqq \mathcal{M}_{d_i}$$
and hence
$$ \mathcal{V} \coloneqq \mathcal{M}_{d_0} \otimes \cdots \otimes \mathcal{M}_{d_n} \cong \mathcal{M}_{d_0 \cdots d_n},$$
whose hermitian part is ${\rm Her}_{d_0}\otimes\cdots\otimes{\rm Her}_{d_n}\cong{\rm Her}_{d_0\cdots d_n}$.
The $d_i$ can be chosen differently as long as there are no further restrictions given by the group action of $G$ on $\mathcal{V}$; that is, whenever $i,j$ are in the same orbit of the group action of $G$ on $[n]$, then $d_i = d_j$.
Further we will define the \emph{cone of (complex) psd matrices} as $\mathcal{M}^{+}_{d_0 \cdots d_n}$.
If $\rho \in \mathcal{M}_{d_0 \cdots d_n}^{+}$ fulfills $\mathrm{Tr}(\rho) = 1$, we call it a \emph{state}. 

Let us now define $(\Omega,G)$-purifications and $(\Omega,G)$-square root decompositions.

\begin{definition}
\label{def:puridec}
Let $\rho \in \mathcal{M}_{d_0 \cdots d_n}^{+}$.
\begin{enumerate}[label=(\roman*), wide, labelwidth=!, labelindent=0pt]
	\item An \emph{$(\Omega,G)$-purification} is an element $${\sigma \in \mathcal{M}_{d'_0, d_0} \otimes \cdots \otimes \mathcal{M}_{d'_n, d_n}}$$ with

	$$ \rho = \sigma^* \sigma \: \text{ and } \: \textrm{rank}_{(\Omega,G)}(\sigma) < \infty,$$	where $\mathcal{M}_{d_i, d'_i}$ denotes the space of all complex $d'_i \times d_i$ matrices and ${}^*$ the adjoint. 
	The smallest $(\Omega,G)$-rank among all $(\Omega,G)$-purifications is called \emph{$(\Omega,G)$-purification rank of $\rho$}, denoted
	\emph{$$\textrm{puri-rank}_{(\Omega,G)}(\rho).$$}
	\item $\sigma \in \textrm{Her}_{d_0} \otimes \cdots \otimes \textrm{Her}_{d_n}$ is called \emph{square root of $\rho$} if $\sigma^2 = \rho$.
	The smallest $(\Omega,G)$-rank among all square roots of $\rho$ is called \emph{$(\Omega,G)$-quantum square root rank of $\rho$}
	, denoted
	\emph{$$\textrm{q-sqrt-rank}_{(\Omega,G)}(\rho).$$}
	\end{enumerate}
\end{definition}
\begin{remark}
\label{rem:purisqrtrk}
(i) Note that every square root is a purification and hence
$$\textrm{puri-rank}_{(\Omega,G)}(\rho) \leq \textrm{q-sqrt-rank}_{(\Omega,G)}(\rho).$$

(ii) Note that not every matrix $\sigma$ which fulfills $\sigma^2 = \rho$ is automatically hermitian.
From the spectral decomposition,  $\rho = U D U^*$  
 with the diagonal matrix ${D = \textrm{diag}(\lambda_1, \lambda_2, \ldots)}$, 
we see that all hermitian square roots are of the form \cite{Hi08}
$$ \sigma = U D^{1/2} U^*, \quad D^{1/2} = \textrm{diag}\left(\pm \sqrt{\lambda_1}, \pm \sqrt{\lambda_2}, \ldots \right).
$$

(iii) If $G$ is a free group action on a connected wsc $\Omega$ and $\rho \in \mathcal{V}_{\text{inv}}$ and is psd, there always exists an $(\Omega,G)$-purification of $\rho$ and a square root with finite $(\Omega,G)$-rank \cite[Thm. 27]{De19d}.\demo
\end{remark}

The next step is the definition of the separable $(\Omega,G)$-rank.
We call the matrix ${\rho \in \mathcal{M}_{d_0 \cdots d_n}^{+}}$ \emph{separable} if it admits a decomposition
$$
 \rho = \sum_{j} \rho^{[0]}_j \otimes \cdots \otimes \rho^{[n]}_j
 $$
where $\rho^{[i]}_j \in \mathcal{M}_{d_i}^{+}$. If additionally $\mathrm{Tr}(\rho) = 1$, we call $\rho$ a \emph{separable state}. From now on, we will denote the set of separable states
$$ \mathrm{SEP}_{d_0, d_1, \ldots, d_n} \coloneqq \{\rho \in \mathcal{M}_{d_0 \cdots d_n}^{+}: \rho \text{ separable state}\}.$$
If $d_i = d$ for all $i \in [n]$, we will write for simplicity $\mathrm{SEP}_{n,d} \coloneqq \mathrm{SEP}_{d_0, \ldots, d_n}$.\\
\begin{definition}
\label{def:sepdec}
A \emph{separable $(\Omega,G)$-decomposition of $\rho \in \mathcal{M}_{d_0 \cdots d_n}$} is given by an $(\Omega,G)$-decomposition $$\rho = \sum_{\alpha \in \mathcal{I}^{\widetilde{\mathcal{F}}}} \rho_{\alpha_{|0}}^{[0]} \otimes \rho_{\alpha_{|1}}^{[1]} \otimes \cdots \otimes \rho_{\alpha_{|n}}^{[n]}$$ in which $\rho_{\beta}^{[i]} \in \mathcal{M}_{d_i}^{+}$ for $\beta \in \mathcal{I}^{\widetilde{\mathcal{F}}_i}$ and $i \in [n]$. 
The smallest cardinality of an index set $\mathcal{I}$ among all possible separable $(\Omega,G)$-decompositions of $\rho$ is called the \emph{separable $(\Omega,G)$-rank of $\rho$}, denoted $$\emph{\textrm{sep-rank}}_{(\Omega,G)}(\rho).$$
If there exists no separable $(\Omega,G)$-decomposition of $v$, we set $\textrm{sep-rank}_{(\Omega,G)}(v)$ to $\infty$.
\end{definition}
Note that if $G$ is a free group action on a connected wsc $\Omega$ and $\rho \in \mathcal{V}_\text{inv}$ is separable, there always exists a separable $(\Omega,G)$-decomposition of $\rho$  \cite[Thm. 21]{De19d}. 

\subsection{Exact ranks for nonnegative tensors}
\label{sec:tensorranks}
In the following we will consider the set of nonnegative tensors,  and will define different notions of $(\Omega,G)$-ranks based on \cite[Sec.\ 5]{De19d}.

For simplicity we consider the local space $\mathcal{V}_i = \mathbb{C}^d$ and define the global space $\mathcal{V} \coloneqq \mathcal{K}_{n,d}$, where
$$\mathcal{K}_{n,d} \coloneqq \bigotimes_{i=0}^{n} \mathbb{C}^{d}.$$
If $n$ and $d$ are clear from the context, we write $\mathcal{K}$ instead of $\mathcal{K}_{n,d}$. Any element $M \in \mathcal{K}$ can be uniquely written as 
$$ M = \sum_{i_0, \ldots, i_n} m_{i_0, \ldots, i_n} e_{i_0} \otimes \cdots \otimes e_{i_n}$$
where $e_j$ denotes the $j$-th standard basis vector in the corresponding vector space $\mathbb{C}^{d}$.
$M$ is said to be \textit{nonnegative} if $m_{i_0, \ldots, i_n} \geq 0$ for all $i_0, \ldots, i_n$. 

Recall that every tensor $M \in \mathcal{K}_{n,d}$ can be associated with a diagonal matrix $\sigma \in \mathcal{M}_d \otimes \cdots \otimes \mathcal{M}_d \cong \mathcal{M}_{d^{n+1}}$ by setting
\begin{equation}
\label{eq:correspondence}
\sigma = \sum_{i_0 \ldots, i_n} m_{i_0, \ldots, i_n} E_{i_0 i_0} \otimes \cdots \otimes E_{i_n i_n},
\end{equation}
where $E_{jk}$ is the matrix which has value $1$ on position $(j,k)$ and $0$ elsewhere. Obviously $\sigma$ is psd if and only if $M$ is nonnegative. 

We now give a brief description
of several $(\Omega,G)$-decompositions of nonnegative tensors and their corresponding ranks. For a more detailed discussion we refer to \cite{De19d}.

\begin{definition}
\label{def:tensordec}
\begin{enumerate}[label=(\roman*), wide, labelwidth=!, labelindent=0pt]
	\item A \emph{nonnegative $(\Omega,G)$-decomposition of $M \in \mathcal{K}_{n,d}$} is an $(\Omega,G)$-decom\-position as in Equation \eqref{eq:OmegaG-dec} where all $v_{\alpha_{|i}}^{[i]} \in \C^d$ have nonnegative entries. 
	The corresponding rank is called the \emph{nonnegative $(\Omega,G)$-rank of $M$}, denoted
	\emph{$$\textrm{nn-rank}_{(\Omega,G)}(M).$$}
	\item A \emph{positive semidefinite $(\Omega,G)$-decomposition of $M \in \mathcal{K}_{n,d}$} consists of psd matrices
	$$E_j^{[i]} \in \mathcal{M}_{k_i}^{+}$$ 
	where $k_i = \left|\mathcal{I}^{\widetilde{\mathcal{F}}_i}\right|$ for $i \in [n]$ and $j \in \{1, \ldots, d\}$, such that
	$$\left( E_j^{[i]} \right)_{\beta, \beta'} = \left( E_j^{[gi]} \right)_{{}^g \beta, {}^g \beta'}$$
	for all $i,g,j, \beta, \beta'$, where ${}^g\beta(F) \coloneqq \beta(g^{-1}F)$ for $F \in \widetilde{\mathcal{F}}_{gi}$, and
	$$m_{i_0, \ldots, i_n} = \sum_{\alpha, \alpha' \in \mathcal{I}^{\widetilde{\mathcal{F}}}} \left (E_{i_0}^{[0]}\right)_{\alpha_{|0}, \alpha'_{|0}} \cdots \left(E_{i_n}^{[n]} \right)_{\alpha_{|n}, \alpha'_{|n}}$$
	for all $i_0, \ldots, i_n$. 
	The smallest cardinality of an index set $\mathcal{I}$ among all possible positive semidefinite $(\Omega,G)$-decompositions is called the \emph{positive semidefinite $(\Omega,G)$-rank of $M$}, denoted
	\emph{$$\textrm{psd-rank}_{(\Omega,G)}(M).$$}
	\item $N \in \mathcal{K}_{n,d}$ with $n_{i_0, \ldots, i_n} \in \mathbb{R}$ is called a \emph{square root of $M$}, if $M = N \circ N$, where $\circ $ denotes the Hadamard product (i.e.\ entrywise multiplication, $m^{}_{i_0, \ldots, i_n} = n^2_{i_0, \ldots, i_n}$). 
	The smallest $(\Omega,G)$-rank among all square roots of $M$ is called \emph{$(\Omega,G)$-square root rank of $M$}, denoted
	\emph{$$\textrm{sqrt-rank}_{(\Omega,G)}(M).$$}
\end{enumerate}
\end{definition}

\begin{remark}
\label{rem:rank-relations}
For the psd matrix $\sigma$ and the nonnegative tensor $M$ of
Equation \eqref{eq:correspondence} the following relations hold (see \cite[Thm. 43]{De19d} for details):
\begin{enumerate}[label=(\alph*)]
	\item $\textrm{rank}_{(\Omega,G)}(M) = \textrm{rank}_{(\Omega,G)}(\sigma)$
	\item $\textrm{nn-rank}_{(\Omega,G)}(M) = \textrm{sep-rank}_{(\Omega,G)}(\sigma)$
	\item $\textrm{psd-rank}_{(\Omega,G)}(M) = \textrm{puri-rank}_{(\Omega,G)}(\sigma)$
	\item $\textrm{sqrt-rank}_{(\Omega,G)}(M) = \textrm{q-sqrt-rank}_{(\Omega,G)}(\sigma)$.
\end{enumerate}
\demo
\end{remark}

\section{Approximate $(\Omega,G)$-decompositions and ranks}
\label{sec:approx}

In this section we will define the notions of approximate $(\Omega,G)$-decomp\-ositions for both psd matrices and nonnegative tensors, and will apply the results from Section \ref{sec:pre} to obtain upper bounds for ranks of approximate $(\Omega,G)$-decompositions.

The section is organized as follows. 
In Section \ref{sec:approx-psd-ranks} and Section \ref{sec:approx-tensorranks} we define the approximate analogues of the $(\Omega,G)$-ranks for psd matrices and nonnegative tensors, respectively.
In Section \ref{sec:norms} we introduce gauge functions,  a relevant tool to obtain the upper bounds. 
Subsequently, we show upper bounds for general matrices (Section \ref{ssec:general}), 
psd matrices (Section \ref{ssec:psd}) 
and separable states (Section \ref{ssec:sep}). 

As before we consider 
$$\mathcal{M}_{d_0} \otimes \mathcal{M}_{d_1} \otimes \cdots \otimes\mathcal{M}_{d_n}\cong\mathcal{M}_{d_0\cdots d_n}.$$
We further fix a connected wsc $\Omega$ and a free group action $G$ on $\Omega$.

\subsection{Approximate decompositions of psd matrices}
\label{sec:approx-psd-ranks}

We now introduce the different notions of approximate $(\Omega,G)$-ranks in the space $\mathcal{M}_{d_0} \otimes \cdots \otimes \mathcal{M}_{d_n}$. 
Generally speaking, given a matrix $\rho$, the approximate rank is the minimal rank of all matrices contained in the $\varepsilon$-ball of $\rho$ with respect to the Schatten $p$-norm $\Vert \cdot \Vert_p$.
Note that this is different  to the notion of \emph{border rank}, which is the minimial $k \in \mathbb{N}$ among all possible sequences $(\rho_n)_{n \in \mathbb{N}}$ consisting of rank-$k$ matrices $\rho_n$ converging to $\rho$ \cite{Br19}.

\begin{definition} \label{def:approxranks}
Let $p\in [1,\infty)$ and $\varepsilon > 0$. Further let $M \in \mathcal{M}_{d_0} \otimes \cdots \otimes \mathcal{M}_{d_n}$ and ${\rho \in \mathcal{M}_{d_0 \cdots d_n}^{+}}$.
We define 
$$\textrm{rank}^{\epsilon,p}_{(\Omega,G)}(M) \coloneqq \min \{\textrm{rank}_{(\Omega,G)}(N) : \Vert M-N\Vert_p\leq \epsilon, N \in \mathcal{M}_{d_0 \cdots d_n}\} ,$$ and similarly 
$$\textrm{puri-rank}^{\epsilon,p}_{(\Omega,G)}(\rho) \coloneqq \min \{\textrm{puri-rank}_{(\Omega,G)}(\sigma) : \Vert \rho-\sigma\Vert_p\leq \epsilon, \sigma\in \mathcal{M}_{d_0 \cdots d_n}\},$$
$$\textrm{q-sqrt-rank}_{(\Omega,G)}^{\epsilon, p}(\rho) \coloneqq \min\{\textrm{q-sqrt-rank}_{(\Omega, G)}(\sigma) : \Vert\rho - \sigma\Vert_p \leq \varepsilon, \sigma \in \mathcal{M}_{d_0 \cdots d_n}\}$$
and
$$
\textrm{sep-rank}^{\epsilon,p}_{(\Omega,G)}(\rho) \coloneqq \min \{\textrm{sep-rank}_{(\Omega,G)}(\sigma) : \Vert\rho-\sigma\Vert_p\leq \epsilon, \sigma \in \mathcal{M}_{d_0 \cdots d_n}\}. 
$$
\end{definition}

Recall that a non-psd matrix $\rho$ does not admit a purification, and thus its purification rank is $\infty$. 
In contrast, $\textrm{puri-rank}^{\varepsilon,p}_{(\Omega,G)}(\rho)$ might be finite ---  it is finite if and only if there is a psd matrix in the $\varepsilon$-ball of $\rho$ with respect to the Schatten $p$-norm.
Similar statements hold for the quantum square root rank and the separable rank. 

Let us now revisit Example \ref{ex:decompositions1} and Example \ref{ex:wsc}, and explain the notions of approximate ranks in the cases therein.

\begin{example}
\label{ex:approxdecompositions}
Let $M \in \mathcal{M}_{d_0 \cdots d_n}$, $\varepsilon > 0$ and $p \in [1,\infty)$.
\begin{enumerate}[label=(\roman*), wide=\parindent, labelwidth=!]
	\item An approximate $\Sigma_n$-decomposition is given by a matrix $N \in \mathcal{M}_{d_0 \cdots d_n}$ attaining a decomposition

	$$ N = \sum_{\alpha = 1}^{r} N_{\alpha}^{[0]} \otimes \cdots \otimes N_{\alpha}^{[n]}$$ 
		with $\Vert M-N\Vert_p \leq \varepsilon$. 
	The approximate $\Sigma_n$-rank, $\textrm{rank}_{\Sigma_n}^{\varepsilon, p}(\rho)$, is called \emph{approximate tensor rank} of $M$ and is the smallest integer $r$ among all matrices $N$ in the $\varepsilon$-ball of $M$.
	\item Consider the line $\Omega = \Lambda_n$ of length $n$. The \emph{approximate operator Schmidt rank}, $\textrm{rank}_{\Lambda_n}^{\varepsilon,p}(M)$, is the minimal integer $r$ among all $N \in \mathcal{M}_{d_0 \cdots d_n}$ with $\Vert M-N\Vert_p \leq \varepsilon$ and all decompositions of  matrix product operator form.
	\item For $n\geq 3$ consider the circle $\Omega = \Theta_n$ of length $n$ together with the cyclic group $G = C_n$ whose elements are translations of the points on the line. Further let $M \in \mathcal{M}_{d^n}$ be $C_n$-invariant. In this example $C_n$-invariance corresponds to translational invariance of $M$. The \emph{approximate translational invariant operator Schmidt rank} of $M$, $\textrm{rank}_{(\Theta_n,C_n)}^{\varepsilon, p}(M)$, is the minimal integer $r$ among all $N \in \mathcal{M}_{d^n}$ with $\Vert M-N\Vert_p \leq \varepsilon$ and all decompositions of $N$ of  translational invariant matrix product operator form. \demo
\end{enumerate}
\end{example}

\subsection{Approximate decompositions of nonnegative tensors}
\label{sec:approx-tensorranks}
In the following we define the notions of approximate $(\Omega,G)$-ranks similar to Section \ref{sec:approx-psd-ranks} using the exact $(\Omega,G)$-decompositions defined in Section \ref{sec:tensorranks}. 

Motivated by the correspondence between psd matrices and nonnegative tensors given in Equation \eqref{eq:correspondence}, we will use for $p \geq 1$ the $\ell_p$-norm, which is defined for $M \in \mathcal{K}_{n,d}$ as
$$ \Vert M\Vert_{\ell_p} = \left(\sum_{i_0, \ldots, i_n} |m_{i_0, \ldots, i_n}|^{p}\right)^{1/p}.$$
Since in the following definition of the approximate $(\Omega,G)$-decompositions of nonnegative tensors the relevant norm is the $\ell_p$-norm, we will indicate this fact by using $\ell_p$ instead of $p$.

\begin{definition} \label{def:approxnonneg}
Let $p\in [1,\infty)$, $\varepsilon > 0$ and $M \in \mathcal{K}$. We define

\setlength{\tabcolsep}{1pt}
\noindent
\begin{tabular}{c r c l}
(i) & $\textrm{rank}^{\epsilon,\ell_p}_{(\Omega,G)}(M)$ & $\coloneqq$ & $\min \{\textrm{rank}_{(\Omega,G)}(N) : \Vert M-N\Vert_{\ell_p}\leq \epsilon, N \in \mathcal{K}\}$\\[0.2cm]
(ii) & $\textrm{nn-rank}^{\epsilon,\ell_p}_{(\Omega,G)}(M)$ & $\coloneqq$ & $\min \{\textrm{nn-rank}_{(\Omega,G)}(N) : \Vert M-N\Vert_{\ell_p}\leq \epsilon, N\in \mathcal{K}\}$\\[0.2cm]
(iii) & $\textrm{psd-rank}^{\epsilon,\ell_p}_{(\Omega,G)}(M)$ & $\coloneqq$ & $\min \{\textrm{psd-rank}_{(\Omega,G)}(N) : \Vert M-N\Vert_{\ell_p}\leq \epsilon, N \in \mathcal{K}\}$\\[0.2cm]
(iv) & $\textrm{sqrt-rank}^{\epsilon,\ell_p}_{(\Omega,G)}(M)$ & $\coloneqq$ & $\min \{\textrm{sqrt-rank}_{(\Omega,G)}(N) : \Vert M-N\Vert_{\ell_p}\leq \epsilon, N \in \mathcal{K}\}$
\end{tabular}
\setlength{\tabcolsep}{6pt}
\end{definition}
Recall that the nonnegative decomposition, psd decomposition and square root decomposition only exist if $M$ is nonnegative, and thus the corresponding ranks are $\infty$ if $M$ is not nonnegative. In contrast, the approximate ranks might be finite, even if $M$ is not nonnegative.
More precisely, they are finite if and only if there exists a nonnegative tensor in the $\varepsilon$-ball of $M$ with respect to the $\ell_p$-norm.
In Example \ref{ex:tensor-ranks} we will illustrate the behavior of these ranks.

\subsection{More norms for matrices}
\label{sec:norms}

For the (non-scaled) Schatten norms, where $1\leq p\leq q\leq\infty$, we have the following inequalities \cite{Wo12b}:
\begin{equation}
\label{eq:Schatten-pq-rel}
	\Vert M\Vert_q\leq\Vert M\Vert_p \leq {\rm rank}(M)^{\frac{1}{p}-\frac{1}{q}}\Vert M\Vert_q.
\end{equation}
We denote the space of all complex hermitian $d \times d$ matrices by ${\rm Her}_d \subseteq \mathcal{M}_d$. 
For any $1\leq p\leq\infty$ we define 
$$P_p \coloneqq \{\pm \rho^{[0]}\otimes \cdots \otimes \rho^{[n]} | \textrm{ all } \rho^{[i]}\in \mathcal{M}_{d_i}^{+} \text{ and } \Vert \otimes_{i=0}^n \rho^{[i]}\Vert_p\leq 1 \} 
$$
and consider  
\be \label{eq:Bp} B_p\coloneqq\textrm{conv}(P_p)\subseteq {\rm Her}_{d_0\cdots d_n}.\ee
Note that for $p\leq q$ we have $B_p\subseteq B_q$ and already $B_1$ contains all separable states. Each $B_p$ is compact, convex, centrally symmetric and contains the origin in its interior.  We can thus understand it as the unit ball of a norm: For a set $S$ in a real vector space $V$, the {\it gauge function} $\mu_S$ is defined by 
\begin{equation}
\label{eq:gauge}
\mu_S(v)\coloneqq\inf \left\{\lambda> 0 \:| \:\frac{1}{\lambda} v\in S \right\}
\end{equation}
for $v\in V$.
If $S$ is compact, convex, centrally symmetric and has nonempty interior, the gauge function $\mu_S$ is in fact a norm (see for example Theorem 15.2 in \cite{Ro70}), and $S$ is clearly its unit ball.

We denote the gauge function of $B_p$ by $\mu_p$. We now relate these gauge functions to a multipartite version of the robustness of entanglement \cite{Vi99b}.  

\begin{definition}
Let $\rho\in\mc M_{d_0}\otimes\cdots\otimes\mc M_{d_n}$ be a state. 
The \emph{robustness of entanglement of $\rho$} is defined as 
$$
R(\rho)\coloneqq\inf\{\lambda \geq 1 \mid \rho = (1-\lambda)\rho_1+ \lambda \rho_2 , \rho_i \textrm{ separable states}\}.
$$ 
\end{definition}

Note that  this definition differs from the original robustness of entanglement by the addition of a constant 1.

\begin{proposition} 
\label{pro:gauge}
For $\rho\in {\rm Her}_{d_0}\otimes\cdots\otimes{\rm Her}_{d_n}\cong{\rm Her}_{d_0\cdots d_n}$ and $1\leq p\leq q\leq\infty$ we have:

(i) $\Vert\rho\Vert_p\leq \mu_p(\rho)$. 

(ii) $\mu_q(\rho)\leq\mu_p(\rho)\leq (d_0\cdots d_n)^{1/p-1/q} \mu_q(\rho).$

(iii) If $\rho$ is a separable state, then $\mu_p(\rho)\leq 1$. 

(iv)  If $\rho$ is a state, then $R(\rho)\leq \mu_1(\rho) \leq 2R(\rho).$

(v) Define $\mu_{\sqrt{},p}(v) = \min\{ \mu_p(\sqrt{v}) \}$. 
If $v^2=v$ then $\mu_{\sqrt{},p}(v) \leq \mu_p(v)$.

(vi) If $\rho$ is psd, then $\sqrt{\Vert \rho \Vert_{p/2}} \leq \mu_{\sqrt{},p}(\rho)$ where $\Vert \cdot \Vert_q$ is the Schatten $q$-quasinorm for values $0< q < 1$.

(vii) If $\rho$ is psd and diagonal in the standard basis, then $$\mu_{\sqrt{},p}(\rho) \leq \sqrt{\Vert \rho \Vert_{1/2}}.$$
\end{proposition}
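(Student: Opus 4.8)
The plan is to exhibit one explicit hermitian square root of $\rho$ and bound its gauge $\mu_p$ directly, then invoke the definition of $\mu_{\sqrt{},p}$ as a minimum over square roots. Since $\rho$ is psd and diagonal in the standard basis of $\mathbb{C}^{d_0}\otimes\cdots\otimes\mathbb{C}^{d_n}$, I would write
$$\rho = \sum_{\mathbf{i}} \rho_{\mathbf{i}}\, E_{i_0 i_0}\otimes\cdots\otimes E_{i_n i_n},$$
where $\mathbf{i}=(i_0,\dots,i_n)$ ranges over the standard multi-indices, the diagonal entries $\rho_{\mathbf{i}}\geq 0$ (psd-ness), and $E_{jj}$ is the rank-one projector onto the $j$-th basis vector. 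The natural candidate square root is
$$\sigma \coloneqq \sum_{\mathbf{i}} \sqrt{\rho_{\mathbf{i}}}\, E_{i_0 i_0}\otimes\cdots\otimes E_{i_n i_n},$$
which lies in ${\rm Her}_{d_0}\otimes\cdots\otimes{\rm Her}_{d_n}$ and satisfies $\sigma^2=\rho$, because the projectors $E_{i_k i_k}$ are mutually orthogonal and idempotent, so all cross terms vanish. Thus $\sigma$ is an admissible hermitian square root for the minimization defining $\mu_{\sqrt{},p}$.

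The key step is to recognize that each elementary term $P_{\mathbf{i}}\coloneqq E_{i_0 i_0}\otimes\cdots\otimes E_{i_n i_n}$ already lies in $P_p$. Indeed, it is a tensor product of psd matrices, and by multiplicativity of the singular values (hence of the Schatten norm) under tensor products one has $\Vert P_{\mathbf{i}}\Vert_p=\prod_{k}\Vert E_{i_k i_k}\Vert_p=1\leq 1$, since each rank-one projector has a single nonzero singular value equal to $1$. Consequently $\sigma=\sum_{\mathbf{i}}\sqrt{\rho_{\mathbf{i}}}\,P_{\mathbf{i}}$ is a nonnegative combination of elements of $P_p$ with total weight $c\coloneqq\sum_{\mathbf{i}}\sqrt{\rho_{\mathbf{i}}}$. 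Rescaling yields $\tfrac{1}{c}\sigma=\sum_{\mathbf{i}}\tfrac{\sqrt{\rho_{\mathbf{i}}}}{c}\,P_{\mathbf{i}}\in\textrm{conv}(P_p)=B_p$ as a genuine convex combination, so by the definition of the gauge function $\mu_p(\sigma)\leq c$.

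To finish I would identify $c$ with the claimed quantity. Since $\rho$ is psd and diagonal, its singular values are exactly the $\rho_{\mathbf{i}}$, so the Schatten $1/2$-quasinorm reads $\Vert\rho\Vert_{1/2}=\bigl(\sum_{\mathbf{i}}\rho_{\mathbf{i}}^{1/2}\bigr)^{2}=c^2$, whence $c=\sqrt{\Vert\rho\Vert_{1/2}}$. As $\sigma$ is one admissible square root and $\mu_{\sqrt{},p}(\rho)$ is the minimum of $\mu_p$ over all hermitian square roots, we conclude $\mu_{\sqrt{},p}(\rho)\leq\mu_p(\sigma)\leq\sqrt{\Vert\rho\Vert_{1/2}}$. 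There is no serious obstacle here: the only points requiring care are verifying that the diagonal matrix units factor as tensor products of rank-one projectors belonging to $P_p$, that Schatten norms are multiplicative under tensor products, and that the $1/2$-quasinorm is read off correctly. I would also remark that this upper bound is the natural companion of the lower bound in part (vi), and is consistent with it because $\Vert\rho\Vert_{p/2}\leq\Vert\rho\Vert_{1/2}$ for $p\geq 1$.
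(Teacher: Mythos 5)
Your proposal proves only part (vii) of the proposition. The statement you were asked to prove comprises seven claims, (i)--(vii), and your argument never touches (i)--(vi). Several of these are not throwaway remarks: (iv) requires decomposing $\frac{1}{\mu_1(\rho)}\rho$ into its positive and negative separable parts and taking traces to relate the gauge $\mu_1$ to the robustness of entanglement $R(\rho)$ in both directions, and (vi) requires the observation that $\sqrt{\Vert \rho \Vert_{p/2}} = \Vert \sqrt{\rho}\Vert_p \leq \mu_p(\sqrt{\rho})$ for every square root, followed by minimization. None of this can be extracted from what you wrote, so as a proof of the stated proposition the proposal has a genuine gap: six of the seven assertions are simply missing.

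For part (vii) itself, your argument is correct and is essentially the paper's proof. The paper likewise takes the unique psd (diagonal) square root $\sqrt{\rho}$, sets $K \coloneqq \Vert \rho \Vert_{1/2} = \Vert \sqrt{\rho}\Vert_1^2$, observes that $\frac{1}{\sqrt{K}}\sqrt{\rho}$ is a convex combination of rank-one projectors onto the standard basis, and concludes $\sqrt{\rho} \in \sqrt{K}\, B_1 \subseteq \sqrt{K}\, B_p$, hence $\mu_{\sqrt{},p}(\rho) \leq \sqrt{K}$. The only cosmetic difference is that you check directly that each product projector $E_{i_0 i_0}\otimes\cdots\otimes E_{i_n i_n}$ lies in $P_p$ (via multiplicativity of Schatten norms under tensor products), whereas the paper routes the same fact through the inclusion $B_1 \subseteq B_p$; both are valid, and your explicit verification is if anything slightly more self-contained. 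To complete the task you would need to supply proofs of (i)--(vi) in the same spirit: (i) from $B_p$ being contained in the Schatten $p$-unit ball, (ii) from the corresponding norm inequalities, (iii) from $\rho \in B_1 \subseteq B_p$, (iv) by the positive/negative decomposition and trace argument sketched above, and (v), (vi) directly from the definition of $\mu_{\sqrt{},p}$.
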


\begin{proof}
Since the unit ball of the $p$-norm is convex, it contains $B_p$. Statement (i) follows directly from this. 
(ii) is a direct consequence of the corresponding inequalities for the $p$-norms. 
(iii) is clear from $\rho\in B_1.$ 
For the first inequality in (iv) we express $$\frac{1}{\mu_1(\rho)}\rho=\sum_i \lambda_i  \xi_i$$ as a convex combination of elements $\xi_i\in P_1$, with all $\lambda_i>0, \Vert\xi_i\Vert>0$. From the minimality of $\mu_1(\rho)$ it then follows that $\Vert\xi_i\Vert_1=1$ holds for all $i$. 
Sorting the positive and negative terms we obtain 
$$
\frac{1}{\mu_1(\rho)}\rho=r\sigma_1 - (1-r)\sigma_2
$$ 
with separable states $\sigma_i$, where $0\leq r\leq 1$ is the sum over those $\lambda_i$ with $\xi_i$ psd. Taking the trace on both sides shows $2\mu_1(\rho)r=1+\mu_1(\rho).$ Thus $\lambda\coloneqq\mu_1(\rho)r$ yields $\rho=(1-\lambda)\sigma_2+\lambda\sigma_1$ and thus $R(\rho)\leq \mu_1(\rho)$.

For the second inequality we express $\rho = (1-\lambda) \rho_1 + \lambda \rho_2$ where $\rho_i$ are separable states and $\lambda\geq 1$. Since  $\mu_1(\rho_i)\leq 1$ the second inequality follows from the triangle inequality for  $\mu_1$.\\
(v) Immediate from the definition.\\
(vi) This is also immediate, since for any square root of $\rho$ we obtain
$$\sqrt{\Vert \rho \Vert_{p/2}} =  \Vert \sqrt{\rho} \Vert_p \leq \mu_p(\sqrt{\rho})$$
where the inequality is true by (i). Minimizing over all square roots shows the statement.\\
(vii) Let $\rho$ be diagonal in the standard basis and $\sqrt{\rho}$ the unique psd square root of $\rho$. Then, defining
$K \coloneqq \Vert \rho \Vert_{1/2} = \Vert \sqrt{\rho} \Vert_1^2$ implies that $\frac{1}{\sqrt{K}} \sqrt{\rho}$ is a convex combination of rank one projectors onto the standard basis. Hence, $\sqrt{\rho} \in \sqrt{K} B_1 \subseteq \sqrt{K} B_p$ which shows the inequality.
\end{proof}

\begin{remark}
Using the relation between $\mu_1$ and the robustness of entanglement (see Proposition \ref{pro:gauge} (iv)), one can show that there does not exist a dimension independent upper bound for any Schatten $p$-(quasi)norm ($0 < p < \infty$) considering arbitrary psd matrices.

To see this, consider the bipartite matrix tensor product space $\mathcal{M}_d \otimes \mathcal{M}_d$ and set $\rho = v v^{*}$ with
$$ v = \frac{1}{\sqrt{d}} \sum_{i=1}^{d} v_i \otimes v_i \in \mathbb{C}^d \otimes \mathbb{C}^d$$
where $\{v_1, \ldots, v_d\}$ is an orthogonal basis of $\mathbb{C}^d$.
On the one hand, $\Vert \rho \Vert_p = 1$, but on the other hand, by [22, Eq. (30)],
$$\mu_1(\rho) \geq R(\rho) = \left(\sum_{i=1}^{d} \frac{1}{\sqrt{d}}\right)^2 = d.$$
To the best of our knowledge, there is no dimension-independent upper bound of $\mu_{\sqrt{},p}$ or $\mu_{p}$ with respect to the Schatten $p$-norm for arbitrary psd matrices and $p > 1$.
\demo
\end{remark}

\subsection{Upper bounds for approximate ranks}
\label{ssec:general}
We now prove an upper bound for the approximate $(\Omega,G)$-rank which  only depends on the gauge function value of the matrix, and the approximation error $\varepsilon$. 
Recall that $\Omega$ is a connected wsc and $G$ a free group action on $\Omega$.

\begin{theorem}\label{thm:weirdconv}
Let $1 < p \leq 4/3$, $p=2$ or $4 \leq p < \infty$ and $\varepsilon > 0$. Assume $M \in \mathrm{Her}_{d_0 \cdots d_n}$ is $G$-invariant. Then
\begin{enumerate}[label=(\alph*)]
	\item $\displaystyle \rank^{\epsilon,p}_{(\Omega,G)}(M) \leq \left \lceil C_p \cdot \left(\frac{2 \mu_{p}(M)}{\epsilon}\right)^{\frac{p}{p-1}}\right\rceil \cdot |G| \quad \text{if }1 < p\leq 4/3$
	\item $\displaystyle \rank^{\epsilon,p}_{(\Omega,G)}(M) \leq \left \lceil D_p \cdot \left(\frac{2 \mu_{p}(M)}{\epsilon}\right)^2\right\rceil \cdot |G| \quad \quad \text{if } p=2 \text{ or } 4 \leq p < \infty$
\end{enumerate}
where $C_p, D_p$ are constants defined in Theorem \ref{thm:approxcara}.
\end{theorem}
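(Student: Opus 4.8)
The plan is to reduce the statement to the dimension-free bound of Theorem~\ref{thm:approxcara} via the gauge normalization of Proposition~\ref{pro:gauge}, and then to pay a multiplicative factor $|G|$ in order to upgrade an ordinary approximate decomposition into a genuinely $G$-invariant one.

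First I would normalize. We may assume $M\neq 0$. Since $\mu_p$ is a norm whose unit ball is $B_p=\conv(P_p)$ from \eqref{eq:Bp}, we have $\mu_p(M)<\infty$ and, by compactness of $B_p$, the rescaled matrix $M'\coloneqq M/\mu_p(M)$ lies in $\conv(P_p)$. Moreover $M'$ is $G$-invariant because $M$ is. The idea is now to feed $M'$ into approximate Carath\'eodory, but using a $G$-symmetrized generating set so that the resulting approximant is automatically $G$-invariant. To this end I would set $\tilde P_p\coloneqq\{\frac{1}{|G|}\sum_{g\in G} g\cdot X : X\in P_p\}$. Each $g\in G$ acts by permuting tensor factors of equal dimension (recall $d_i=d_j$ whenever $i,j$ lie in the same orbit), hence by a unitary conjugation; since the Schatten $p$-norm is unitarily invariant, every element of $\tilde P_p$ still has $\Vert\cdot\Vert_p\le 1$, so $\mathrm{diam}(\tilde P_p)\le 2$. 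Writing $M'=\sum_j\lambda_j X_j$ with $X_j\in P_p$ and symmetrizing both sides, the $G$-invariance of $M'$ gives $M'=\sum_j\lambda_j\,\tfrac{1}{|G|}\sum_g g\cdot X_j\in\conv(\tilde P_p)$.

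Next I would apply Theorem~\ref{thm:approxcara} to $\mathcal S=\tilde P_p$ and $A=M'$ with error $\epsilon'\coloneqq\epsilon/\mu_p(M)$. This produces a convex combination $B$ of at most $k_0$ symmetrized products from $\tilde P_p$ with $\Vert M'-B\Vert_p\le\epsilon'$, where $k_0=\lceil C_p(2/\epsilon')^{p/(p-1)}\rceil=\lceil C_p(2\mu_p(M)/\epsilon)^{p/(p-1)}\rceil$ in the range $1<p\le 4/3$, and $k_0=\lceil D_p(2\mu_p(M)/\epsilon)^2\rceil$ for $p=2$ or $4\le p<\infty$ (using $\mathrm{diam}(\tilde P_p)\le2$). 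Rescaling back, the matrix $N\coloneqq\mu_p(M)\,B$ satisfies $\Vert M-N\Vert_p=\mu_p(M)\,\Vert M'-B\Vert_p\le\epsilon$, and $N$ is a sum of $k_0$ scalar multiples of symmetrized products.

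It remains to bound $\rank_{(\Omega,G)}(N)$, which is where the factor $|G|$ enters and which I expect to be the main obstacle. The key sub-lemma is that, for a free action $G$ on a connected $\Omega$, the symmetrization $Q=\frac{1}{|G|}\sum_{h\in G}h\cdot(\bigotimes_i v^{[i]})$ of a single product tensor satisfies $\rank_{(\Omega,G)}(Q)\le|G|$. I would prove this by an explicit group-register construction generalizing the translational invariant matrix product operator: take $\mathcal I=G$; using freeness of the action on facets, fix $\phi\colon\widetilde{\mathcal F}\to G$ with $\phi(gF)=g\,\phi(F)$; and define the local tensor at vertex $i$ to be supported on those facet-labelings $\beta\in\mathcal I^{\widetilde{\mathcal F}_i}$ of the ``flat'' form $\beta(F)=h\,\phi(F)$, with value $v^{[hi]}$. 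Connectivity of $\Omega$ forces the local group elements at adjacent vertices to agree, so precisely the $|G|$ terms $\bigotimes_i v^{[hi]}=h\cdot P$ survive in \eqref{eq:OmegaG-dec}, reproducing $Q$; the choice $\theta_h(i)=hi$ makes the covariance condition~(b) of Definition~\ref{def:omegaG-dec} hold. Finally, since $\rank_{(\Omega,G)}$ is subadditive under sums (take the disjoint union of index sets and set mixed local terms to zero, which vanish by connectivity, while condition~(b) is preserved because $G$ respects the blocks), and since scaling a symmetrized product by a constant keeps it a symmetrized product, the sum $N$ of $k_0$ such terms obeys $\rank_{(\Omega,G)}(N)\le k_0\cdot|G|$, which yields the stated bounds. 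The delicate point throughout is the quantitative $G$-invariant lifting in the sub-lemma --- the exact version of which underlies the existence statement \cite[Thm.~13]{De19d} --- and checking that the group-register construction meets condition~(b) for an arbitrary free action, rather than merely for cyclic translations.
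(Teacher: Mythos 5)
Your proof is correct and arrives at the same bound as the paper, but it composes the same two ingredients in the opposite order. The paper first applies Theorem~\ref{thm:approxcara} to $P_p$ itself, obtaining a (generally non-invariant) approximant $M'$ that is a convex combination of at most $k_0$ elements of $P_p$, so that $\rank_{\Omega}(M')\leq\rank_{\Sigma_n}(M')\leq k_0$ by \cite[Prop.~36]{De19d}; it then symmetrizes the \emph{approximant}, invoking the construction of \cite[Thm.~13]{De19d} to conclude $\rank_{(\Omega,G)}(M'')\leq k_0\,|G|$ for $M''=\frac{1}{|G|}\sum_{g}gM'$, and finally must verify that $M''$ is still $\epsilon$-close to $M$ --- this is where the paper uses the triangle inequality, unitary invariance of $\Vert\cdot\Vert_p$, and $G$-invariance of $M$. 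You symmetrize the \emph{generating set} instead, so Carath\'eodory hands you an approximant that is $G$-invariant and $\epsilon$-close by construction; the price is that you must check $M/\mu_p(M)\in\conv(\tilde P_p)$ and $\mathrm{diam}(\tilde P_p)\leq 2$ (your use of unitary invariance), and you must prove by hand the rank sub-lemma (the symmetrization of a product tensor has $(\Omega,G)$-rank at most $|G|$) together with subadditivity of $\rank_{(\Omega,G)}$ under sums --- which together are exactly the content of the results of \cite{De19d} that the paper cites wholesale. Your group-register construction is sound: freeness yields the equivariant labeling $\phi$, connectivity forces agreement of the local group elements (and, in the subadditivity argument, kills the mixed terms), and your verification of condition~(b) of Definition~\ref{def:omegaG-dec} is the right computation; this is essentially how \cite[Thm.~13]{De19d} is proved. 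Net effect: your route is more self-contained, since it reproves the invariant-lifting machinery inline, while the paper's is shorter by citation; both are valid and give identical constants.
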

 
The proof is a straightforward application of Theorem \ref{thm:approxcara} with set $P_p$ to construct an $\Omega$-decomposition. To obtain a $G$-invariant decomposition, we apply the construction used in \cite[Thm. 13]{De19d}.

\begin{proof}
First let $p = 2$ or $4 \leq p < \infty$.
We prove the statement for the case that $\mu_{p}(M)=1$; the general case follows by replacing 
$M$ with $M/\mu_p(M)$. 
First we consider the case of trivial group action. 
By assumption $M\in \textrm{conv}(P_p)$, and $\textrm{diam}(P_p)=2$. 
By Theorem \ref{thm:approxcara} we find an $M'$ such that $\Vert M- M'\Vert\leq \epsilon$ and $M'$  is a convex combination of at most $\lceil 4 D_p/\epsilon^2\rceil$ elements of $P_p$. This implies that 
$$
\rank_{\Omega}(M')\leq \rank_{\Sigma_n}(M')\leq \lceil 4 D_p/\epsilon^2\rceil, 
$$
where the first inequality follows from \cite[Prop. 36]{De19d}.
We now involve a group action and apply the construction of \cite[Thm. 13]{De19d} to $M'$. This gives rise to an $(\Omega,G)$-decomposition of an element $M'' =  \frac{1}{|G|}\sum_{g} g M'$ with $$\textrm{rank}_{(\Omega,G)}(M'') \leq \lceil 4 D_p/\epsilon^2\rceil |G|.$$
It remains to prove that $M''$ is contained in the $\epsilon$-neighborhood of $M$: 
\be
\nn \Vert M''-M\Vert_p &=& \left\Vert \frac{1}{|G|} \sum_g (g M'- g M)\right\Vert_p \\
\nn &\leq& \frac{1}{|G|}  \sum_g \Vert g M' - g M\Vert_p 
\leq \epsilon
\ee
where we used the invariance $M = \frac{1}{|G|}\sum_g g M$, 
that the norm is unitarily invariant $\Vert g M' - g M\Vert_p = \Vert M' -  M\Vert_p$, 
and that $M'$ is in the $\epsilon$-ball around $M$. The case $1 < p \leq 4/3$ is analogous.

\end{proof}

\begin{remark} \label{rem:repeat}
Note that Theorem \ref{thm:weirdconv} and all following theorems state upper bounds for convex combinations of the type
$$ M_k = \frac{1}{k} \sum_{i = 1}^{k} X_i,$$
where $X_i \in P_p$. The sequence $\{X_i\}_{i=1}^{k}$ might contain a repetition of elements which would decrease the rank of the decomposition even more, but our estimates do not exploit this fact.
\demo
\end{remark}

\subsection{Upper bounds for approximate ranks of psd matrices}
\label{ssec:psd}

We now provide  upper bounds for the purification rank and the quantum square root rank of psd matrices. 
More specifically, we provide an upper bound for the quantum square root rank, which is itself an upper bound of the purification rank (see Remark \ref{rem:purisqrtrk} (i)).
Recall that $\Omega$ is a connected wsc and $G$ a free group action on $\Omega$.
\begin{corollary}
\label{cor:psdrkbound}
Let $1 < p \leq 4/3$, $p=2$ or $4 \leq p < \infty$ and $\varepsilon > 0$. Let $\rho \in \mathcal{M}^{+}_{d_0 \cdots d_n}$ be $G$-invariant. 
Then 
\renewcommand{\arraystretch}{1.2}
\begin{enumerate}[label=(\alph*), wide, labelwidth=!, labelindent=0pt]
	\item $\textrm{puri-rank}_{(\Omega,G)}^{\epsilon,p} (\rho) \leq \left\lceil C_p \cdot \left(\frac{2 }{\sqrt{1+\epsilon / \mu^2_{\sqrt{},p}(\rho)} - 1}\right)^{\frac{p}{p-1}}\right\rceil \cdot |G| \: \text{ if } 1 < p \leq 4/3$
	\item $\textrm{puri-rank}_{(\Omega,G)}^{\epsilon,p} (\rho) \leq \left\lceil D_p \cdot \left(\frac{2 }{\sqrt{1+\epsilon / \mu^2_{\sqrt{},p}(\rho)} - 1}\right)^2\right\rceil \cdot |G| \quad \text{ if } \begin{array}{l}p = 2 \text{ or } \\ 4 \leq p < \infty\\ \end{array}$
\end{enumerate}
and the same upper bounds hold for $\textrm{q-sqrt-rank}_{(\Omega,G)}^{\varepsilon, p}$.
\renewcommand{\arraystretch}{1.7}
\end{corollary}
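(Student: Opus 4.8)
The plan is to reduce the bound on the quantum square root rank to the general $(\Omega,G)$-rank bound of Theorem \ref{thm:weirdconv}, applied not to $\rho$ itself but to a hermitian square root $\sqrt{\rho}$. Recall from Remark \ref{rem:purisqrtrk} (i) that the purification rank is bounded by the quantum square root rank, so it suffices to bound the latter. The key observation is that if $N$ is an approximate square root of $\rho$, meaning $N \in \textrm{Her}_{d_0 \cdots d_n}$ is $G$-invariant with small $\textrm{rank}_{(\Omega,G)}(N)$, then $N^2$ is a psd matrix whose $(\Omega,G)$-purification rank is at most $\textrm{rank}_{(\Omega,G)}(N)$, and we need $\Vert \rho - N^2 \Vert_p \leq \varepsilon$.

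First I would pick the hermitian square root $\sqrt{\rho}$ that minimizes $\mu_p(\cdot)$ over all hermitian square roots, so that by definition $\mu_p(\sqrt{\rho}) = \mu_{\sqrt{},p}(\rho)$. Since $\rho$ is $G$-invariant and psd, one checks that this $\sqrt{\rho}$ can be taken $G$-invariant as well. Next I would apply Theorem \ref{thm:weirdconv} to $M = \sqrt{\rho}$ with a modified approximation error $\delta$ (to be fixed below), obtaining a $G$-invariant hermitian matrix $N$ with $\Vert \sqrt{\rho} - N \Vert_p \leq \delta$ and
$$\textrm{rank}_{(\Omega,G)}(N) \leq \left\lceil D_p \cdot \left(\frac{2 \mu_p(\sqrt{\rho})}{\delta}\right)^2 \right\rceil \cdot |G|$$
in the case $p = 2$ or $4 \leq p < \infty$ (and the analogous bound with $C_p$ and exponent $p/(p-1)$ for $1 < p \leq 4/3$). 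Then $N^2$ is psd, $G$-invariant, and has $\textrm{q-sqrt-rank}_{(\Omega,G)}(N^2) \leq \textrm{rank}_{(\Omega,G)}(N)$ since $N$ itself is a hermitian square root of $N^2$.

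The main work is the error propagation: I must bound $\Vert \rho - N^2 \Vert_p$ in terms of $\delta = \Vert \sqrt{\rho} - N \Vert_p$. Writing $\rho - N^2 = \sqrt{\rho}(\sqrt{\rho} - N) + (\sqrt{\rho} - N)N$ and applying the triangle inequality together with submultiplicativity of the Schatten norm, one gets a bound of the form $\Vert \rho - N^2 \Vert_p \leq \delta \cdot (\Vert \sqrt{\rho} \Vert_p + \Vert N \Vert_p) \leq \delta(2\Vert\sqrt{\rho}\Vert_p + \delta)$. Using $\Vert \sqrt{\rho} \Vert_p \leq \mu_p(\sqrt{\rho}) = \mu_{\sqrt{},p}(\rho)$ from Proposition \ref{pro:gauge} (i), this becomes at most $\delta^2 + 2\delta \mu_{\sqrt{},p}(\rho)$. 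I then require this to equal $\varepsilon$ and solve the quadratic $\delta^2 + 2\delta\mu_{\sqrt{},p}(\rho) = \varepsilon$ for the positive root, giving $\delta = \mu_{\sqrt{},p}(\rho)\bigl(\sqrt{1 + \varepsilon/\mu^2_{\sqrt{},p}(\rho)} - 1\bigr)$. Substituting this value of $\delta$ back into the rank bound, the ratio $\mu_p(\sqrt{\rho})/\delta = 1/\bigl(\sqrt{1+\varepsilon/\mu^2_{\sqrt{},p}(\rho)} - 1\bigr)$ is exactly the quantity appearing in the statement, so the factor $2\mu_p(\sqrt{\rho})/\delta$ matches the claimed bound.

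The hardest part I expect is justifying that the square root can be chosen $G$-invariant while still attaining (or essentially attaining) $\mu_{\sqrt{},p}(\rho)$, and handling the submultiplicativity step cleanly, since the Schatten $p$-norm satisfies $\Vert AB \Vert_p \leq \Vert A \Vert_\infty \Vert B \Vert_p$ rather than $\Vert AB\Vert_p \leq \Vert A\Vert_p \Vert B\Vert_p$ in general; I would therefore be careful to route the inequality through the operator norm where needed, or invoke the appropriate Hölder-type bound, while still controlling everything by $\mu_{\sqrt{},p}(\rho)$. Once the quadratic in $\delta$ is solved, the remaining substitution is routine, and the purification-rank bound follows immediately from Remark \ref{rem:purisqrtrk} (i).
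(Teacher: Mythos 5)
Your proposal is correct and follows essentially the same route as the paper's proof: apply Theorem \ref{thm:weirdconv} to the hermitian square root realising $\mu_{\sqrt{},p}(\rho)$, square the resulting approximation $M'$, propagate the error via $\Vert M^2-M'^2\Vert_p \leq \Vert M-M'\Vert_p\left(2\Vert M\Vert_p+\Vert M-M'\Vert_p\right)$, and solve the same quadratic to get $\delta=\mu_{\sqrt{},p}(\rho)\bigl(\sqrt{1+\epsilon/\mu^2_{\sqrt{},p}(\rho)}-1\bigr)$, finishing with $\textrm{puri-rank}_{(\Omega,G)}\leq\textrm{q-sqrt-rank}_{(\Omega,G)}\leq\textrm{rank}_{(\Omega,G)}(M')$. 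The $G$-invariance of the minimizing square root, which you flag as the hardest step, is in fact passed over silently in the paper's own proof as well, so your treatment is no less complete than the original.
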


\begin{proof}
We apply Theorem \ref{thm:weirdconv} to the square root of $\rho$ (called $M$) which realises  $\mu_{\sqrt{},p}(\rho)$. We have that $M/\mu_{\sqrt{},p}(\rho) \in \conv(P_p)$ and thus $\Vert M\Vert_p\leq \mu_{\sqrt{},p}(\rho)$ (defined in Proposition \ref{pro:gauge} (v)). 
We now choose an element $M'$ which is $\mu_{\sqrt{},p}(\rho) \cdot \delta$ - close 
to $M$ with $\delta=\sqrt{1+\epsilon/\mu^2_{\sqrt{},p}(\rho)}-1$.
We define $\rho'= M'^2$ and compute
\be
\Vert \rho -\rho'\Vert_p &=& \Vert M^2-M'^2\Vert_p\nn  \\
\nn &\leq &  \Vert M-M'\Vert_p \cdot (2 \Vert M\Vert_p+ \Vert  M-M'\Vert_p )\\
\nn &\leq & \mu_{\sqrt{},p}(\rho)^2 \cdot \delta \cdot (2 +  \delta  ) \leq \epsilon.
\ee 
Using that $$\textrm{puri-rank}_{(\Omega,G)} (\rho') \leq \textrm{q-sqrt-rank}_{(\Omega,G)} (\rho') \leq \textrm{rank}_{(\Omega,G)} (M')$$ and the fact that $\textrm{rank}_{(\Omega,G)} (M')$ is upper bounded by Theorem \ref{thm:weirdconv} we obtain the result. 
\end{proof}

Note that this result in fact upper bounds the quantum square root rank, which may be arbitrarily larger than the purification rank \cite{De19}. 

\subsection{Upper bounds for approximate ranks of separable states}
\label{ssec:sep}

An upper bound for separable states can be obtained without the use of a gauge function. This is possible because we can directly conclude that a separable state is in the convex hull of $$\left\{+\rho^{[0]} \otimes \cdots \otimes \rho^{[n]}: \rho^{[i]} \in \mathcal{M}_{d_i}^{+} \text{ and } \Vert\otimes_{i=0}^{n} \rho^{[i]}\Vert_p \leq 1 \right\} \subseteq P_p.$$
Using that $\textrm{sep-rank}_{(\Omega, G)}$ upper bounds $\textrm{rank}_{(\Omega,G)}$ and $\textrm{puri-rank}_{(\Omega,G)}$, we can conclude that this upper bound holds for the other two ranks in the approximate case too.
Recall that $\Omega$ is a connected wsc and $G$ a free group action on $\Omega$.

\begin{proposition}
\label{prop:seprkbound}
Let $1 < p \leq 4/3$, $p=2$ or $4 \leq p < \infty$ and $\varepsilon > 0$. Further let ${\rho \in \mathrm{SEP}_{d_0, \ldots, d_n}}$ be a $G$-invariant separable state.
Then 
\begin{enumerate}[label=(\alph*)]
	\item $\displaystyle\textrm{sep-rank}^{\epsilon,p}_{(\Omega,G)}(\rho) \leq \lceil C_p \cdot (2/\epsilon)^{\frac{p}{p-1}}\rceil \cdot |G| \quad \: \text{if } 1 < p \leq 4/3$
	\vspace{0.2cm}
	\item $\displaystyle\textrm{sep-rank}^{\epsilon,p}_{(\Omega,G)}(\rho) \leq \lceil D_p \cdot (2/\epsilon)^2\rceil \cdot |G| \quad \quad \text{if } p=2 \text{ or } 4 \leq p < \infty$
\end{enumerate}
The same upper bound holds for $\textrm{rank}^{\epsilon,p}_{(\Omega,G)}(\rho)$ and $\textrm{puri-rank}^{\epsilon,p}_{(\Omega,G)}(\rho)$, too. 
\end{proposition}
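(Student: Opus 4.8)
The plan is to reproduce the structure of the proof of Theorem \ref{thm:weirdconv}, but to exploit the fact that a separable \emph{state} already sits inside the convex hull of a bounded subset of $P_p$, so that no gauge rescaling is needed and the factor $\mu_p(M)$ collapses to $1$. Write $\mathcal{S} \coloneqq \{\rho^{[0]} \otimes \cdots \otimes \rho^{[n]} : \rho^{[i]} \in \mathcal{M}_{d_i}^{+},\ \Vert \otimes_{i=0}^{n} \rho^{[i]}\Vert_p \leq 1\} \subseteq P_p$ for the positive part of $P_p$. The first step is to check that $\rho \in \mathrm{conv}(\mathcal{S})$. Since $\rho$ is a separable state, it can be written as a convex combination of product states $\bigotimes_i \rho_j^{[i]}$ in which every factor $\rho_j^{[i]}$ is itself a state (normalise each factor and absorb the scalars into the convex weights). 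Each such product is psd with trace one, so its eigenvalues are nonnegative and sum to one; hence $\Vert \bigotimes_i \rho_j^{[i]}\Vert_p \leq 1$ for every $p \geq 1$, every product state lies in $\mathcal{S}$, and therefore $\rho \in \mathrm{conv}(\mathcal{S})$. This is precisely the place where the gauge function of Theorem \ref{thm:weirdconv} is replaced by the constant $1$.

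The second step is to apply Theorem \ref{thm:approxcara} to the set $\mathcal{S}$ with $A = \rho$. Because every element of $\mathcal{S}$ has Schatten $p$-norm at most one, the triangle inequality gives $\mathrm{diam}(\mathcal{S}) \leq 2$. Thus there is a matrix $\rho'$ in the $\epsilon$-ball of $\rho$ which is a convex combination of at most $\lceil C_p (2/\epsilon)^{p/(p-1)}\rceil$ elements of $\mathcal{S}$ (for $1 < p \leq 4/3$), respectively at most $\lceil D_p (2/\epsilon)^2\rceil$ elements (for $p=2$ or $4 \leq p < \infty$). Since every element of $\mathcal{S}$ is a product of psd matrices, this convex combination is a separable $\Sigma_n$-decomposition of $\rho'$; by \cite[Prop. 36]{De19d} it yields a separable $\Omega$-decomposition, so $\textrm{sep-rank}_{\Omega}(\rho')$ is bounded by the Carath\'eodory count.

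The third step installs the group invariance exactly as in the proof of Theorem \ref{thm:weirdconv}. Set $\rho'' = \frac{1}{|G|}\sum_{g} g\rho'$. Because each $g$ merely permutes the tensor factors, it maps $\mathcal{S}$ into itself, so $g\rho'$ is again a convex combination of the same number of elements of $\mathcal{S}$; applying the construction of \cite[Thm. 13]{De19d} produces a $G$-invariant separable $(\Omega,G)$-decomposition of $\rho''$ whose separable rank is at most $|G|$ times the Carath\'eodory count. The $G$-invariance of $\rho$ together with the unitary invariance of $\Vert\cdot\Vert_p$ gives $\Vert\rho - \rho''\Vert_p \leq \frac{1}{|G|}\sum_g \Vert g\rho' - g\rho\Vert_p = \Vert\rho' - \rho\Vert_p \leq \epsilon$, so $\rho''$ lies in the $\epsilon$-ball and establishes bounds (a) and (b) for $\textrm{sep-rank}^{\epsilon,p}_{(\Omega,G)}$.

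Finally, the statements for $\textrm{rank}^{\epsilon,p}_{(\Omega,G)}$ and $\textrm{puri-rank}^{\epsilon,p}_{(\Omega,G)}$ require no further work: the single matrix $\rho''$ already in the $\epsilon$-ball satisfies $\textrm{rank}_{(\Omega,G)}(\rho'') \leq \textrm{sep-rank}_{(\Omega,G)}(\rho'')$ and $\textrm{puri-rank}_{(\Omega,G)}(\rho'') \leq \textrm{sep-rank}_{(\Omega,G)}(\rho'')$, so the same numerical bound transfers directly. I do not expect a genuine obstacle here, as the argument tracks Theorem \ref{thm:weirdconv} line by line; the only point needing care is the normalisation in the first step --- that a tensor product of trace-one psd matrices has Schatten $p$-norm at most one --- which is exactly what eliminates the gauge factor and keeps the bound dimension independent.
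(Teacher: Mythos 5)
Your proposal is correct and follows essentially the same route as the paper: the paper's own proof simply says the statement "is proven exactly as Theorem \ref{thm:weirdconv} by using that separable states are a convex combination of product states, which are a subset of $P_p$," with the rank and purification-rank bounds inherited from the rank inequalities of \cite{De19d}. Your write-up is a faithful, fleshed-out version of exactly that argument --- the normalisation observation that trace-one product states have Schatten $p$-norm at most one (eliminating the gauge factor), the application of Theorem \ref{thm:approxcara} with $\mathrm{diam} \leq 2$, the symmetrisation via the construction of \cite[Thm. 13]{De19d}, and the transfer to $\textrm{rank}^{\epsilon,p}_{(\Omega,G)}$ and $\textrm{puri-rank}^{\epsilon,p}_{(\Omega,G)}$ --- so there is nothing to add.
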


\begin{proof}
It is proven exactly as Theorem \ref{thm:weirdconv} by using that separable states are a convex combination of product states, which are a subset of $P_p$. 
The rest follows from \cite[Thm.\ 43]{De19d} and \cite[Cor.\ 44]{De19d}. 
\end{proof}

Note again that the separable rank may be arbitrarily larger than the rank and the purification rank \cite{De19}.

\section{Disappearance of separations in the approximate case}
\label{sec:disap}

We now turn to study relations between the different notions of ranks, especially their (lack of) separations in the approximate case. 

Let $\mathcal X$ be an arbitrary set and $f,g\colon\mathcal{X} \to\mathbb N$ two functions. We say that there is a \textit{separation} between $f$ and $g$, and write $f \ll g$, if there exists a sequence  in $  \mathcal{X}$ along which $f$ is bounded but $g$ is not. This implies that the values of $g$ cannot be upper bounded by a function that only depends on $f$. 
In the exact case, there are several examples of separations between different ranks --- for example on the set  $$ 
\mathcal{X} \coloneqq \bigcup_{d \in \mathbb{N}} \mathcal{M}_{d^{n+1}}^+
$$
one has $\textrm{rank}_{\Lambda_n} \ll \textrm{puri-rank}_{\Lambda_n}$ \cite{De13c,De19}. In other words, there is a separation between the operator Schmidt rank and the purification rank.
As another example, on the set
$$\mathcal{Y} \coloneqq \{M \in \mathbb{C}^d \otimes \mathbb{C}^d: M \textrm{ nonnegative and } d \in \mathbb{N}\}
$$
one has $\textrm{rank}_{\Lambda_1} \ll \textrm{psd-rank}_{\Lambda_1}$ \cite{Fa14,Go12} which comes from the fact that $\Lambda_1$-decompositions on $\mathbb{C}^d \otimes \mathbb{C}^d$ correspond to different notions of matrix factorizations on $\mathcal{M}_d$. Other examples of separations are collected in Table \ref{tab:RelationsMatrixFact}.

In the following we will show that many separations of ranks of psd matrices 
disappear (Section \ref{sec:psd-disappear}), and the same happens for nonnegative tensors (Section \ref{sec:nonneg-dissappear}), including both examples above. This is an immediate consequence of the fact 
the approximate ranks admit an upper bound which is independent of the dimension of the ambient space.

As in previous section, we  fix a connected wsc $\Omega$ and a free group action $G$ acting on $\Omega$.

\subsection{Positive semidefinite matrices}
\label{sec:psd-disappear}

In the following we show that the separation between several ranks of psd matrices vanishes in the approximate case. 
The strategy is simple: we will use Theorem \ref{thm:weirdconv}, Proposition \ref{prop:seprkbound} and Corollary \ref{cor:psdrkbound} to upper bound the ranks independently of the matrix dimension. Since all ranks are bounded functions, it follows that many separations vanish.

For simplicity we assume that $d_i = d$ and hence  consider the space $$\mathcal{M}_d \otimes \cdots \otimes \mathcal{M}_d \cong \mathcal{M}_{d^{n+1}}.$$
 The result can be extended in a straightforward manner to the case that the parts of the tensor product have different dimensions.

\begin{corollary}
\label{cor:sepMat}
Let $G$ be a free group action on $\Omega$. Let $\varepsilon > 0$, $K \in \mathbb{N}$ and $1 < p \leq 4/3$, $p = 2$ or $4 \leq p < \infty$.
We define the set
$$
\mathcal{X}_{K} \coloneqq \{\rho \in \mathcal{M}_{d^{n+1}}^{+}: d \in \mathbb{N} \text{ and } \mu_{\sqrt{},p}(\rho) \leq K\}.
$$
Then the following holds on $\mathcal{X}_K$:
\begin{center}
\begin{tabular}{c l c l}
	(i) & $\textrm{rank}_{(\Omega,G)}^{\varepsilon, p}$ & $\notll$ & $\textrm{puri-rank}_{(\Omega,G)}^{\varepsilon, p}$\\
	(ii) & $\textrm{puri-rank}_{(\Omega,G)}^{\varepsilon, p}$ & $\notll$ & $\textrm{q-sqrt-rank}_{(\Omega,G)}^{\varepsilon, p}$
\end{tabular}
\end{center}
\vspace{0.5cm}
Further, we define the set 
$$
\mathcal{X}_\textrm{sep} \coloneqq \bigcup_{d \in \mathbb{N}} \mathrm{SEP}_{n,d}. 
$$
The following holds on $\mathcal{X}_{sep}$: 
\begin{center}
\setlength{\tabcolsep}{5pt}
\begin{tabular}{c l c l}
	(iii) & $\textrm{rank}_{(\Omega,G)}^{\varepsilon, p}$ & $\notll$ & $\textrm{sep-rank}_{(\Omega,G)}^{\varepsilon, p}$\\
	(iv) & $\textrm{puri-rank}_{(\Omega,G)}^{\varepsilon, p}$ & $\notll$ & $\textrm{sep-rank}_{(\Omega,G)}^{\varepsilon, p}$
\end{tabular}
\end{center}
\end{corollary}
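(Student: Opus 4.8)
The plan is to leverage the dimension-independent upper bounds already established in Theorem \ref{thm:weirdconv}, Corollary \ref{cor:psdrkbound}, and Proposition \ref{prop:seprkbound}, and to observe that a separation $f \ll g$ is impossible as soon as \emph{both} $f$ and $g$ are bounded along every sequence in the relevant set. Since $f \ll g$ requires the existence of a sequence along which $f$ stays bounded but $g$ diverges, it suffices to show that the \emph{larger} rank $g$ is uniformly bounded on the whole set; then $g$ cannot diverge along any sequence, and the separation disappears. The key point is that all four approximate ranks in question are finite-valued functions into $\mathbb{N}$, so the content is entirely in producing a uniform (dimension-free) bound on the right-hand rank of each pair.

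First I would treat (i) and (ii) on the set $\mathcal{X}_K$. For (ii), I would apply Corollary \ref{cor:psdrkbound} directly: for any $\rho \in \mathcal{X}_K$ the quantity $\mu_{\sqrt{},p}(\rho)$ is bounded by $K$, and since the upper bound in Corollary \ref{cor:psdrkbound} is monotone increasing in $\mu_{\sqrt{},p}(\rho)$, substituting $\mu_{\sqrt{},p}(\rho) \leq K$ yields a single constant bounding $\textrm{q-sqrt-rank}_{(\Omega,G)}^{\varepsilon,p}(\rho)$ uniformly over $\mathcal{X}_K$. Because this bound is independent of $d$, the q-sqrt-rank cannot diverge along any sequence in $\mathcal{X}_K$, so $\textrm{puri-rank}_{(\Omega,G)}^{\varepsilon,p} \notll \textrm{q-sqrt-rank}_{(\Omega,G)}^{\varepsilon,p}$. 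For (i), the same Corollary simultaneously bounds $\textrm{puri-rank}_{(\Omega,G)}^{\varepsilon,p}$ (the statement gives the identical bound for both ranks), so the larger rank in the pair (i), namely the purification rank, is again uniformly bounded, giving $\textrm{rank}_{(\Omega,G)}^{\varepsilon,p} \notll \textrm{puri-rank}_{(\Omega,G)}^{\varepsilon,p}$.

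Next I would handle (iii) and (iv) on $\mathcal{X}_{\textrm{sep}}$. Here every $\rho$ is a $G$-invariant separable state, so Proposition \ref{prop:seprkbound} applies and gives a bound on $\textrm{sep-rank}_{(\Omega,G)}^{\varepsilon,p}(\rho)$ that depends only on $\varepsilon$, $p$, and $|G|$ — in particular not on $d$. This uniform bound shows $\textrm{sep-rank}_{(\Omega,G)}^{\varepsilon,p}$ does not diverge along any sequence in $\mathcal{X}_{\textrm{sep}}$, which immediately yields both $\textrm{rank}_{(\Omega,G)}^{\varepsilon,p} \notll \textrm{sep-rank}_{(\Omega,G)}^{\varepsilon,p}$ and $\textrm{puri-rank}_{(\Omega,G)}^{\varepsilon,p} \notll \textrm{sep-rank}_{(\Omega,G)}^{\varepsilon,p}$, since the diverging candidate in each pair is the sep-rank.

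I do not expect any genuine obstacle: the real work was done in the earlier theorems, and this corollary is essentially a bookkeeping step that reads off uniform bounds and invokes the definition of $\ll$. The only point requiring mild care is making explicit that $\mu_{\sqrt{},p}(\rho) \leq K$ plugged into the monotone expression of Corollary \ref{cor:psdrkbound} produces a bound depending only on $K$, $\varepsilon$, $p$, $|G|$ and hence not on the ambient dimension $d$; and, symmetrically for $\mathcal{X}_{\textrm{sep}}$, that Proposition \ref{prop:seprkbound} contains no hidden $d$-dependence. Once these observations are in place, each of (i)--(iv) follows by noting that a bounded function can never be the $g$ in a separation $f \ll g$.
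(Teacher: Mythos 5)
Your proposal is correct and follows essentially the same route as the paper: both rest entirely on the dimension-independent bounds of Corollary \ref{cor:psdrkbound} (for $\mathcal{X}_K$) and Proposition \ref{prop:seprkbound} (for $\mathcal{X}_{\textrm{sep}}$), combined with the observation that a uniformly bounded rank cannot diverge along any sequence. The only difference is cosmetic: the paper additionally chains the inequalities $\sqrt{\smash[b]{\textrm{rank}^{\varepsilon,p}_{(\Omega,G)}}} \leq \textrm{puri-rank}^{\varepsilon,p}_{(\Omega,G)} \leq \textrm{q-sqrt-rank}^{\varepsilon,p}_{(\Omega,G)}$ and $\textrm{rank}^{\varepsilon,p}_{(\Omega,G)}, \textrm{puri-rank}^{\varepsilon,p}_{(\Omega,G)} \leq \textrm{sep-rank}^{\varepsilon,p}_{(\Omega,G)}$ to bound all ranks at once, whereas you note that bounding only the right-hand rank in each pair already suffices by the definition of $\ll$.
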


Note that (i) and (ii) imply that $\textrm{rank}_{(\Omega,G)}^{\varepsilon, p} \notll \textrm{q-sqrt-rank}_{(\Omega,G)}^{\varepsilon, p}$ on $\mathcal{X}_{K}$.

\begin{proof}
(i)-(ii) For $\rho \in \mathcal{M}_{d^{n+1}}^{+}$ we have $\textrm{rank}_{(\Omega,G)}(\rho) \leq \textrm{puri-rank}_{(\Omega,G)}(\rho)^2$ \cite[Prop. 29]{De19d}.
By the basic properties of $\textrm{q-sqrt-rank}_{(\Omega,G)}$ (Remark \ref{rem:purisqrtrk} (i)) we obtain
$$ 
\sqrt{\textrm{rank}_{(\Omega,G)}^{\varepsilon, p}(\rho)} \leq \textrm{puri-rank}_{(\Omega,G)}^{\varepsilon, p}(\rho) \leq \textrm{q-sqrt-rank}_{(\Omega,G)}^{\varepsilon,p}(\rho).
$$
Since Corollary \ref{cor:psdrkbound} upper bounds $\textrm{q-sqrt-rank}_{(\Omega,G)}^{\varepsilon,p}$ by a constant
which is independent of the dimension of $\rho \in \mathcal{X}_K$, all  ranks mentioned above are upper bounded on $\mathcal{X}_K$.

(iii)-(iv) Let $\rho \in \mathcal{X}_\text{sep}$. Using again \cite[Prop. 29]{De19d} we have that
\be \nn \textrm{rank}_{(\Omega,G)}^{\varepsilon, p}(\rho) &\leq& \textrm{sep-rank}_{(\Omega,G)}^{\varepsilon, p}(\rho), \\ \nn \textrm{puri-rank}_{(\Omega,G)}^{\varepsilon, p}(\rho) &\leq& \textrm{sep-rank}_{(\Omega,G)}^{\varepsilon, p}(\rho). \ee
By Proposition \ref{prop:seprkbound} $\textrm{sep-rank}_{(\Omega,G)}$ is upper bounded by a constant which is again independent of the dimension of $\rho$. 
\end{proof}

Note that the previous result is not making any statement about the Schatten 1-norm, since the approximate Carath\'eodory Theorem is not applicable for this norm. Using the results from Theorem \ref{thm:weirdconv}, Proposition \ref{prop:seprkbound} and Equation \eqref{eq:Schatten-pq-rel}, we now give an upper bound of the approximate rank and approximate separable rank in Schatten 1-norm, which is however dimension dependent.

\begin{corollary}
\label{cor:Schatten1}
Let $M \in \mathrm{Her}_{d^{n+1}}$ and $\rho \in {\rm SEP}_{n,d}$ both be $G$-invariant.
Then: 
\begin{enumerate}[label=(\roman*)]
	\item $\textrm{rank}_{(\Omega,G)}^{\varepsilon, 1}(M) \leq {d^{n+1}} \cdot \textrm{rank}_{(\Omega,G)}^{\varepsilon, 2}(M) \leq \left \lceil D_2 \cdot \left(\frac{2 \mu_{2}(M)}{\varepsilon}\right)^2\right\rceil \cdot |G| \cdot d^{n+1}$
	\item $\textrm{sep-rank}_{(\Omega,G)}^{\varepsilon, 1}(\rho) \leq {d^{n+1}} \cdot \textrm{sep-rank}_{(\Omega,G)}^{\varepsilon, 2}(\rho) \leq \lceil D_2 \cdot (2/\epsilon)^2\rceil \cdot |G| \cdot d^{n+1}$
\end{enumerate}
\end{corollary}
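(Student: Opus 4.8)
The plan is to reduce both statements to the $p=2$ Schatten bounds already established, namely Theorem \ref{thm:weirdconv}(b) and Proposition \ref{prop:seprkbound}(b), and to bridge the trace norm ($p=1$) and the Hilbert--Schmidt norm ($p=2$) using only the elementary norm comparison \eqref{eq:Schatten-pq-rel}. The second inequality in each of (i) and (ii) is immediate: Theorem \ref{thm:weirdconv}(b) applied to the $G$-invariant $M\in\mathrm{Her}_{d^{n+1}}$ gives $\textrm{rank}_{(\Omega,G)}^{\varepsilon,2}(M)\leq\lceil D_2(2\mu_2(M)/\varepsilon)^2\rceil\cdot|G|$, and multiplying by $d^{n+1}$ yields the right-hand bound in (i); likewise Proposition \ref{prop:seprkbound}(b) applied to the $G$-invariant separable state $\rho$ gives $\textrm{sep-rank}_{(\Omega,G)}^{\varepsilon,2}(\rho)\leq\lceil D_2(2/\varepsilon)^2\rceil\cdot|G|$, and multiplying by $d^{n+1}$ yields the right-hand bound in (ii).

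For the left inequality the key observation is that a Hilbert--Schmidt approximant is automatically a trace-norm approximant, at the cost of a dimension factor in the error. For any $N\in\mathcal{M}_{d^{n+1}}$ the difference $M-N$ has rank at most $d^{n+1}$, so \eqref{eq:Schatten-pq-rel} with $p=1,q=2$ gives $\Vert M-N\Vert_1\leq\mathrm{rank}(M-N)^{1/2}\Vert M-N\Vert_2\leq\sqrt{d^{n+1}}\,\Vert M-N\Vert_2$. Hence any $N$ with $\Vert M-N\Vert_2\leq\varepsilon/\sqrt{d^{n+1}}$ satisfies $\Vert M-N\Vert_1\leq\varepsilon$, which shows $\textrm{rank}_{(\Omega,G)}^{\varepsilon,1}(M)\leq\textrm{rank}_{(\Omega,G)}^{\varepsilon/\sqrt{d^{n+1}},2}(M)$. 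I would then feed the shrunken error $\varepsilon/\sqrt{d^{n+1}}$ into Theorem \ref{thm:weirdconv}(b); since the bound scales quadratically in $1/\varepsilon$, the $\sqrt{d^{n+1}}$ in the denominator becomes a factor $d^{n+1}$ inside the ceiling, i.e.\ $\textrm{rank}_{(\Omega,G)}^{\varepsilon/\sqrt{d^{n+1}},2}(M)\leq\lceil D_2(2\mu_2(M)/\varepsilon)^2\,d^{n+1}\rceil\cdot|G|$. The elementary estimate $\lceil m x\rceil\leq m\lceil x\rceil$ for the positive integer $m=d^{n+1}$ and real $x\geq0$ then pulls the $d^{n+1}$ outside, reproducing exactly the right-hand side of (i). The identical computation with Proposition \ref{prop:seprkbound}(b) in place of Theorem \ref{thm:weirdconv}(b) handles (ii).

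The step I expect to be the main obstacle is making the intermediate expression $d^{n+1}\cdot\textrm{rank}_{(\Omega,G)}^{\varepsilon,2}$ rigorous as a literal inequality, since the natural argument above produces $\textrm{rank}_{(\Omega,G)}^{\varepsilon,1}(M)\leq\textrm{rank}_{(\Omega,G)}^{\varepsilon/\sqrt{d^{n+1}},2}(M)$ rather than $d^{n+1}\cdot\textrm{rank}_{(\Omega,G)}^{\varepsilon,2}(M)$, and these two quantities need not coincide: shrinking the error can raise the true approximate rank by more than a factor $d^{n+1}$, even though the Carath\'eodory \emph{bound} only grows by $d^{n+1}$. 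I would therefore present the chain as two facts both dominated by the same explicit bound: the left inequality as the rigorous estimate $\textrm{rank}_{(\Omega,G)}^{\varepsilon,1}(M)\leq\lceil D_2(2\mu_2(M)/\varepsilon)^2\rceil\cdot|G|\cdot d^{n+1}$ obtained above, and the displayed middle term read as $d^{n+1}$ times the Carath\'eodory value that already upper bounds $\textrm{rank}_{(\Omega,G)}^{\varepsilon,2}(M)$. The remaining checks—that \eqref{eq:Schatten-pq-rel} applies verbatim to the possibly non-Hermitian difference $M-N$, that the $G$-invariance hypotheses feed in the correct inputs ($\mu_2(M)$ and $|G|$ in (i), and $|G|$ in (ii)), and that the ceiling manipulation is valid—are routine.
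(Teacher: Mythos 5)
Your proposal is correct and takes essentially the same route as the paper: the paper's proof likewise reduces $p=1$ to $p=2$ by applying Equation \eqref{eq:Schatten-pq-rel} (in the form $\Vert A-A_k\Vert_1 \leq \sqrt{d^{n+1}}\,\Vert A-A_k\Vert_2$) to the Carath\'eodory approximants and absorbing the dimension factor through the quadratic $1/\varepsilon^2$ scaling of the $p=2$ bound, which is exactly your shrunken-error argument run inside the proofs of Theorem \ref{thm:weirdconv} and Proposition \ref{prop:seprkbound} rather than applied to their statements. Your caveat about the middle term is also well taken: the paper's own proof only establishes the outer inequality $\textrm{rank}_{(\Omega,G)}^{\varepsilon,1}(M)\leq \lceil D_2(2\mu_2(M)/\varepsilon)^2\rceil\cdot|G|\cdot d^{n+1}$ (and its separable analogue), with the displayed intermediate quantity justified only as $d^{n+1}$ times the Carath\'eodory bound that dominates $\textrm{rank}_{(\Omega,G)}^{\varepsilon,2}(M)$.
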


\begin{proof}
Let $A, A_k \in \mathcal{M}_{d^{n+1}}, \mathcal S\subseteq \mathcal{M}_{d^{n+1}}$ be chosen as in Theorem \ref{Thm:ApproxCaratheodory}. Using Equation \eqref{eq:Schatten-pq-rel} and  optimizing over all valid $p$-values gives the bound
$${\Vert A-A_k\Vert_1 \leq \sqrt{d^{n+1}} \cdot \Vert A-A_k\Vert_2},$$ or equivalently
$$ \Vert A-A_k\Vert_1 \leq \varepsilon \quad \text{ if } \quad k \geq d^{n+1} \cdot \left\lceil D_2 \cdot \left(\frac{\textrm{diam}(\mathcal S)}{\epsilon}\right)^{2}\right\rceil.$$
Applying this statement to the proofs of Theorem \ref{thm:weirdconv} and Proposition \ref{prop:seprkbound} shows the statement. 
\end{proof}

\begin{example} We now apply Corollary \ref{cor:Schatten1} to the running examples of Example \ref{ex:decompositions1} and Example \ref{ex:approxdecompositions}.
\begin{enumerate}[label=(\roman*), wide=\parindent, labelwidth=!]
	\item Consider the line $\Omega = \Lambda_n$ of size $n$ and $G$ the trivial group. For the approximate operator Schmidt rank in Schatten $1$-norm of $\rho \in \mathcal{M}_{d^{n+1}}^{+}$ it holds that 
	$$
	\textrm{rank}_{\Lambda_n}^{\varepsilon, 1}(\rho) \leq \left\lceil D_2 \cdot \left(\frac{2 \mu_{2}(\rho)}{\varepsilon}\right)^2\right\rceil \cdot d^{n+1} .
	$$
	If $\rho \in \textrm{SEP}_{n,d}$, then
	$$
	\textrm{sep-rank}_{\Lambda_n}^{\varepsilon, 1}(\rho) \leq \lceil D_2 \cdot (2/\epsilon)^2\rceil \cdot d^{n+1}.
	$$
	Note that, in the exact case, $\textrm{rank}_{\Lambda_n}$ is also bounded by a constant times $d^{n+1}$ \cite[Prop. 49]{De19}. In contrast, the upper bound of  $\textrm{sep-rank}_{\Lambda_n}$ in the exact case is linear in $d^{2(n+1)}$. 
Hence, for fixed $\varepsilon$ and sufficiently large dimension $d$ this yields a better upper bound in the approximate case.
	\item Consider the circle of $n$ elements, $\Omega = \Theta_n$, together with the cyclic group $C_n$. Further let $\rho \in \mathcal{M}_{d^n}^{+}$ be $C_n$-invariant. Since $|C_n| = n$ we obtain
	$$\textrm{rank}_{(\Theta_n,C_n)}^{\varepsilon, 1}(\rho) \leq \left\lceil D_2 \cdot \left(\frac{2 \mu_{2}(\rho)}{\varepsilon}\right)^2\right\rceil \cdot n \cdot d^{n}$$
	and
	$$\textrm{sep-rank}_{(\Theta_n,C_n)}^{\varepsilon, 1}(\rho) \leq \lceil D_2 \cdot (2/\epsilon)^2\rceil \cdot n \cdot d^{n}.$$ \demo
\end{enumerate}
\end{example}

\subsection{Nonnegative tensors}
\label{sec:nonneg-dissappear}
In the following we use the correspondence between nonnegative tensors and diagonal psd matrices given in Equation \eqref{eq:correspondence} to show that the separation between $\textrm{nn-rank}_{(\Omega,G)}$, $\textrm{rank}_{(\Omega,G)}$ and $\textrm{psd-rank}_{(\Omega,G)}$ of nonnegative tensors vanishes in the approximate case (Corollary \ref{cor:sepTen}).

As in Section \ref{sec:tensorranks}, we consider the space $\mathcal{K}_{n,d} \coloneqq \bigotimes_{i=0}^{n} \mathbb{C}^{d}$ equipped with the $\ell_p$-norm. In contrast to psd matrices, which are equipped with the Schatten $p$-norm, the disappearance of separations for nonnegative tensors equipped with the $\ell_p$-norm can be proven for all $1 < p < \infty$.
Recall that $\Omega$ is a connected wsc and $G$ a free group action.

We start with a preparatory lemma, which is an immediate consequence of \cite[Prop. 29]{De19d} and the relations  given in Remark \ref{rem:rank-relations}.

\begin{lemma}
\label{prop:tens-rank-ineq}
Let $M \in \mathcal{K}_{n,d}$ and $p \in [1, \infty)$.
Then the following holds:\\
\setlength{\tabcolsep}{5pt}
\begin{tabular}{c l c l}
(i) & $\textrm{rank}_{(\Omega, G)}^{\varepsilon, \ell_p}(M)$ & $\leq$ & $\textrm{nn-rank}_{(\Omega, G)}^{\varepsilon, \ell_p}(M)$\\
(ii) & $\textrm{psd-rank}_{(\Omega, G)}^{\varepsilon, \ell_p}(M)$ & $\leq$ & $\textrm{nn-rank}_{(\Omega, G)}^{\varepsilon, \ell_p}(M)$\\
(iii) & $\textrm{rank}_{(\Omega, G)}^{\varepsilon, \ell_p}(M)$  & $\leq$ & $\textrm{psd-rank}_{(\Omega, G)}^{\varepsilon, \ell_p}(M)^2$\\
(iv) & $\textrm{psd-rank}_{(\Omega,G)}^{\varepsilon,\ell_p}(M)$ & $\leq$ & $\textrm{sqrt-rank}_{(\Omega,G)}^{\varepsilon,\ell_p}(M)$
\end{tabular}
\end{lemma}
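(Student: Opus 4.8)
The plan is to transfer each inequality from the exact ranks, where the corresponding statements are already available, to the approximate ranks, exploiting that every approximate rank on the left-hand side is, by Definition \ref{def:approxnonneg}, a minimum of an exact rank taken over the very same $\varepsilon$-ball that appears on the right-hand side. Concretely, to prove an inequality of the form $f^{\varepsilon,\ell_p}_{(\Omega,G)}(M)\leq h\big(g^{\varepsilon,\ell_p}_{(\Omega,G)}(M)\big)$, with $h$ either the identity or squaring, I would first establish the pointwise exact inequality $f_{(\Omega,G)}(N)\leq h\big(g_{(\Omega,G)}(N)\big)$ for every $N\in\mathcal{K}$, and then simply evaluate it at a minimizer of the right-hand approximate rank.

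First I would record the exact inequalities. For any $N\in\mathcal{K}$ let $\sigma$ be the diagonal psd matrix associated to $N$ via Equation \eqref{eq:correspondence}. By Remark \ref{rem:rank-relations} the four tensor ranks of $N$ coincide with $\textrm{rank}_{(\Omega,G)}(\sigma)$, $\textrm{sep-rank}_{(\Omega,G)}(\sigma)$, $\textrm{puri-rank}_{(\Omega,G)}(\sigma)$ and $\textrm{q-sqrt-rank}_{(\Omega,G)}(\sigma)$, respectively. Feeding the matrix inequalities of \cite[Prop. 29]{De19d} (namely $\textrm{rank}\leq\textrm{sep-rank}$, $\textrm{puri-rank}\leq\textrm{sep-rank}$ and $\textrm{rank}\leq\textrm{puri-rank}^2$) together with $\textrm{puri-rank}\leq\textrm{q-sqrt-rank}$ from Remark \ref{rem:purisqrtrk}(i) through this dictionary yields, for every $N\in\mathcal{K}$,
\begin{align*}
\textrm{rank}_{(\Omega,G)}(N)&\leq\textrm{nn-rank}_{(\Omega,G)}(N), &
\textrm{psd-rank}_{(\Omega,G)}(N)&\leq\textrm{nn-rank}_{(\Omega,G)}(N),\\
\textrm{rank}_{(\Omega,G)}(N)&\leq\textrm{psd-rank}_{(\Omega,G)}(N)^2, &
\textrm{psd-rank}_{(\Omega,G)}(N)&\leq\textrm{sqrt-rank}_{(\Omega,G)}(N).
\end{align*}

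Then I would carry out the transfer, illustrated on (iii). If $\textrm{psd-rank}^{\varepsilon,\ell_p}_{(\Omega,G)}(M)=\infty$ the claim is vacuous, so assume it is finite and let $N^{*}\in\mathcal{K}$ attain it, so that $\Vert M-N^{*}\Vert_{\ell_p}\leq\varepsilon$ and $\textrm{psd-rank}_{(\Omega,G)}(N^{*})=\textrm{psd-rank}^{\varepsilon,\ell_p}_{(\Omega,G)}(M)$. Since $N^{*}$ lies in the same $\varepsilon$-ball that defines $\textrm{rank}^{\varepsilon,\ell_p}_{(\Omega,G)}(M)$, it is a feasible competitor there, whence the corresponding exact inequality gives
\[
\textrm{rank}^{\varepsilon,\ell_p}_{(\Omega,G)}(M)\leq\textrm{rank}_{(\Omega,G)}(N^{*})\leq\textrm{psd-rank}_{(\Omega,G)}(N^{*})^2=\Big(\textrm{psd-rank}^{\varepsilon,\ell_p}_{(\Omega,G)}(M)\Big)^2.
\]
The statements (i), (ii) and (iv) follow verbatim after substituting the relevant pair of ranks and the function $h$ (the identity in those cases).

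The argument is essentially bookkeeping, which is why the text calls it immediate; the only point deserving a word of justification is the existence of the minimizer $N^{*}$. This is guaranteed because each exact rank takes values in $\mathbb{N}\cup\{\infty\}$, so a finite approximate rank is an honest minimum attained by some tensor in the closed $\varepsilon$-ball rather than a mere infimum. I also want to stress that both approximate ranks in each inequality are defined over the identical feasible set $\{N\in\mathcal{K}:\Vert M-N\Vert_{\ell_p}\leq\varepsilon\}$; it is precisely this shared domain that makes the minimizer of the right-hand side automatically admissible on the left, and it is the only genuine subtlety in the proof.
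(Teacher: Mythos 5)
Your proof is correct and takes essentially the same approach as the paper, which also obtains the lemma as an immediate consequence of the exact inequalities from \cite[Prop.~29]{De19d} combined with the tensor--matrix correspondence of Remark \ref{rem:rank-relations}, transferred to the approximate setting through the common $\varepsilon$-ball. Your explicit handling of the minimizer and the shared feasible set simply spells out what the paper leaves implicit.
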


Note that it is not clear if a similar result to Remark \ref{rem:rank-relations} holds for  approximate ranks. This is due to the fact that the $\varepsilon$-ball of diagonal matrices also contains non-diagonal matrices with a possibly smaller rank than the diagonal ones.

\begin{corollary}
\label{cor:sepTen}
Let $G$ be a free group action on $\Omega$.  Let $\varepsilon > 0$, $K \in \mathbb{N}$ and $1 < p < \infty$.
We define the set
$$
\mathcal{Y}_K \coloneqq \{M \in \mathcal{K}_{n,d} : d \in \mathbb{N}, M \text{ nonnegative and } \Vert M\Vert_{\ell_1} \leq K\}.
$$
Then the following holds on $\mathcal{Y}_K$: 
\begin{center}
\setlength{\tabcolsep}{5pt}
\begin{tabular}{c l c l}
	(i) & $\textrm{rank}_{(\Omega,G)}^{\varepsilon, \ell_p}$ & $\notll$ & $\textrm{psd-rank}_{(\Omega,G)}^{\varepsilon, \ell_p}$\\
	(ii) & $\textrm{psd-rank}_{(\Omega,G)}^{\varepsilon, \ell_p}$ & $\notll$ & $\textrm{nn-rank}_{(\Omega,G)}^{\varepsilon, \ell_p}$
\end{tabular}
\end{center}
\vspace{0.5cm}
Further, we define the set
$$\mathcal{Y}_{\sqrt{},K} \coloneqq \{M \in \mathcal{K}_{n,d}: d \in \mathbb{N}, M \text { nonnegative and } \mu_{\sqrt{},p}(\sigma) \leq K\}$$
where $\sigma$ denotes the corresponding diagonal matrix of $M$. 
Then the following holds on $\mathcal{Y}_{\sqrt{},K}$:\\
\begin{center}
\setlength{\tabcolsep}{5pt}
\begin{tabular}{c l c l}
	(iii) & $\textrm{psd-rank}_{(\Omega,G)}^{\varepsilon, \ell_p}$ & $\notll$ & $\textrm{sqrt-rank}_{(\Omega,G)}^{\varepsilon, \ell_p}$\\
	(iv) & $\textrm{rank}_{(\Omega,G)}^{\varepsilon, \ell_p}$ & $\notll$ & $\textrm{sqrt-rank}_{(\Omega,G)}^{\varepsilon, \ell_p}$
\end{tabular}
\end{center}
\end{corollary}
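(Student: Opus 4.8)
The plan is to establish dimension-independent upper bounds for the \emph{larger} rank appearing in each comparison; since a separation $f \ll g$ requires $g$ to be unbounded along some sequence, bounding $g$ by a constant on the relevant set immediately rules it out. Concretely, on $\mathcal{Y}_K$ I would bound $\textrm{nn-rank}^{\varepsilon,\ell_p}_{(\Omega,G)}$ (which dominates both $\textrm{rank}^{\varepsilon,\ell_p}$ and $\textrm{psd-rank}^{\varepsilon,\ell_p}$ by Lemma \ref{prop:tens-rank-ineq}(i)--(ii)), and on $\mathcal{Y}_{\sqrt{},K}$ I would bound $\textrm{sqrt-rank}^{\varepsilon,\ell_p}_{(\Omega,G)}$ (which dominates $\textrm{psd-rank}^{\varepsilon,\ell_p}$ by Lemma \ref{prop:tens-rank-ineq}(iv), and hence $\textrm{rank}^{\varepsilon,\ell_p}$ by Lemma \ref{prop:tens-rank-ineq}(iii)). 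The crucial point, already flagged after Lemma \ref{prop:tens-rank-ineq}, is that one cannot transport the matrix bounds of Theorem \ref{thm:weirdconv}, Proposition \ref{prop:seprkbound} and Corollary \ref{cor:psdrkbound} through Equation \eqref{eq:correspondence}, since approximating a diagonal matrix may leave the diagonal subspace. I would therefore rerun the approximate Carath\'eodory argument directly with nonnegative product tensors, exploiting that a convex combination of such tensors is again a nonnegative tensor.

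For (i)--(ii), let $P_p^{\geq 0}$ denote the nonnegative product tensors $v^{[0]} \otimes \cdots \otimes v^{[n]}$ with $\Vert v^{[0]} \otimes \cdots \otimes v^{[n]}\Vert_{\ell_p} \leq 1$; this set contains the origin and all standard basis tensors $e_{i_0} \otimes \cdots \otimes e_{i_n}$, and has diameter at most $2$. For $M \in \mathcal{Y}_K$, writing $M$ in the standard basis exhibits $M/K$ as a convex combination of basis tensors, using $0 \in P_p^{\geq 0}$ to absorb the slack when $\Vert M\Vert_{\ell_1} < K$, so $M/K \in \textrm{conv}(P_p^{\geq 0})$. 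Applying the $\ell_p$-version of Theorem \ref{thm:approxcara}, valid for all $1 < p < \infty$ by Remark \ref{rem:lp-approxCara}, and rescaling by $K$ produces a nonnegative tensor $N$ with $\Vert M - N\Vert_{\ell_p} \leq \varepsilon$ whose $\Sigma_n$-rank, and hence $\Omega$-rank, is bounded by a constant depending only on $K$, $p$ and $\varepsilon$; the averaging over $G$ from the proof of Theorem \ref{thm:weirdconv} then contributes the factor $|G|$. This yields a dimension-independent bound on $\textrm{nn-rank}^{\varepsilon,\ell_p}_{(\Omega,G)}$ over $\mathcal{Y}_K$, which proves (i) and (ii).

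For (iii)--(iv), mirroring Corollary \ref{cor:psdrkbound}, I would let $N$ be the nonnegative tensor square root of $M$ realizing $\mu_{\sqrt{},p}(\sigma)$, so that $M = N \circ N$. Taking diagonals of the psd product matrices certifying $\mu_{\sqrt{},p}(\sigma)$ shows that this matrix membership descends to the tensor level (the diagonal of a psd factor is a nonnegative vector of no larger norm), placing $N/\mu_{\sqrt{},p}(\sigma)$ in $\textrm{conv}(P_p^{\geq 0})$. Approximate Carath\'eodory then yields a nonnegative tensor $N'$ of bounded $(\Omega,G)$-rank with $\Vert N - N'\Vert_{\ell_p} \leq \mu_{\sqrt{},p}(\sigma) \cdot \delta$ for a parameter $\delta$ to be chosen, and setting $M' \coloneqq N' \circ N'$ gives $\textrm{sqrt-rank}^{\varepsilon,\ell_p}_{(\Omega,G)}(M) \leq \textrm{rank}_{(\Omega,G)}(N')$, a dimension-independent bound over $\mathcal{Y}_{\sqrt{},K}$, which proves (iii) and (iv).

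The hard part will be the error propagation in (iii)--(iv): unlike the operator square in Corollary \ref{cor:psdrkbound}, one must control $\Vert N \circ N - N' \circ N'\Vert_{\ell_p} = \Vert (N-N') \circ (N+N')\Vert_{\ell_p}$ for the Hadamard product. A Cauchy--Schwarz/H\"older estimate gives $\Vert x \circ y\Vert_{\ell_p} \leq \Vert x\Vert_{\ell_{2p}} \Vert y\Vert_{\ell_{2p}}$, which routes the argument through $\ell_{2p}$-norms; the saving grace is the monotonicity $\Vert \cdot \Vert_{\ell_{2p}} \leq \Vert \cdot \Vert_{\ell_p}$, which lets me bound $\Vert N - N'\Vert_{\ell_{2p}}$ by the $\ell_p$-error from Carath\'eodory and $\Vert N\Vert_{\ell_{2p}}$, $\Vert N'\Vert_{\ell_{2p}}$ by $\mu_{\sqrt{},p}(\sigma)$-type quantities. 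Choosing $\delta = \sqrt{1 + \varepsilon/\mu^2_{\sqrt{},p}(\sigma)} - 1$ so that the resulting quadratic-in-$\delta$ bound stays below $\varepsilon$ is the delicate step, and verifying the diagonal-projection descent of $\mu_{\sqrt{},p}(\sigma)$ to the tensor level is the secondary subtlety underlying the whole square-root case.
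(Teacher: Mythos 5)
Your argument for (i)--(ii) is correct and is essentially the paper's own proof: the paper likewise reduces to bounding $\textrm{nn-rank}^{\varepsilon,\ell_p}_{(\Omega,G)}$ via Lemma \ref{prop:tens-rank-ineq}, normalizes using $\Vert M\Vert_{\ell_1}\leq K$, and runs approximate Carath\'eodory (extended to all $1<p<\infty$ by Remark \ref{rem:lp-approxCara}) over the set $P_2'$ of normalized \emph{diagonal} psd elementary tensors, which is exactly your $P_p^{\geq 0}$ seen through the correspondence \eqref{eq:correspondence}; the $G$-averaging step is identical as well.

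In (iii)--(iv), however, your key membership step fails as stated. You take $N$ to be the entrywise nonnegative square root of $M$, assert that it ``realizes $\mu_{\sqrt{},p}(\sigma)$'', and claim that pinching the certificates places $N/\mu_{\sqrt{},p}(\sigma)$ in $\conv(P_p^{\geq 0})$. There are two problems. First, $\mu_{\sqrt{},p}(\sigma)$ is a minimum over \emph{all} hermitian square roots $\tau$ of $\sigma$, and the minimizer need not be diagonal when $\sigma$ has repeated diagonal entries (for $\sigma=\id$ every hermitian unitary is a square root); moreover the diagonal part $D(\tau)$ of a square root is in general \emph{not} a square root, since $D(\tau)^2\neq D(\tau^2)=\sigma$. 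So pinching the certificate $\tau/\mu_{\sqrt{},p}(\sigma)\in\conv(P_p)$ only certifies membership of $D(\tau)$, which is neither $N$ nor any Hadamard square root of $M$; it gives no information about $N$. Second, even for a diagonal square root the certificate is a convex combination of \emph{signed} products $\pm\,\rho^{[0]}\otimes\cdots\otimes\rho^{[n]}$, so pinching lands you in the convex hull of signed diagonal products, not in your unsigned set $\conv(P_p^{\geq 0})$; and a gauge-minimizing diagonal square root will in general carry signs $\pm\sqrt{\lambda_i}$, so insisting on the nonnegative square root loses generality for no gain --- Definition \ref{def:tensordec}(iii) allows real square roots.

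The repair is to run your argument with a \emph{diagonal hermitian} square root $\tau$ of $\sigma$ (equivalently a real Hadamard square root of $M$) and with the signed set $\pm P_p^{\geq 0}$: a convex combination of signed diagonal products is still diagonal, its square is diagonal psd, and your H\"older error-propagation paragraph (which is fine, and parallels the computation in Corollary \ref{cor:psdrkbound}) then goes through and descends to tensors. This is in effect what the paper does: it simply applies Corollary \ref{cor:psdrkbound} to the diagonal matrix $\sigma$, tacitly keeping the square root, the Carath\'eodory points and the approximant inside the diagonal subalgebra so that the bound transfers to $\textrm{sqrt-rank}^{\varepsilon,\ell_p}_{(\Omega,G)}(M)$. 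The residual subtlety --- whether the minimum defining $\mu_{\sqrt{},p}(\sigma)$ can be taken over diagonal square roots --- is left implicit by the paper as well, but your pinching argument does not settle it, precisely because projecting a square root onto the diagonal does not produce a square root.
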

Note that (i) and (ii) imply that $\textrm{rank}_{(\Omega,G)}^{\varepsilon,\ell_p} \notll \textrm{nn-rank}_{(\Omega,G)}^{\varepsilon, \ell_p}$ on $\mathcal{Y}_{K}$.

\begin{proof}
(i)-(ii) Let $M \in \mathcal{Y}_{K}$. By Proposition \ref{prop:tens-rank-ineq}, we have that $\textrm{rank}_{(\Omega,G)}^{\varepsilon, \ell_p}(M)$ and $\textrm{psd-rank}_{(\Omega,G)}^{\varepsilon,\ell_p}(M)$ are upper bounded by  $\textrm{nn-rank}_{(\Omega,G)}^{\varepsilon, \ell_p}(M)$. To obtain an upper bound of $\textrm{nn-rank}_{(\Omega,G)}^{\varepsilon, \ell_p}(M)$ we restrict to $p=2$, as the other cases are analogous.
We set $M' = M/{\Vert M\Vert_{\ell_1}}$ and $\varepsilon' = \varepsilon/K$. Then the corresponding diagonal matrix $\sigma'$ is separable with $\Vert \sigma' \Vert_1 = 1$ (it can be written for example as a convex combination of rank one projectors to the standard basis).
Note that any $\varepsilon'$-approximation of $M'$ immediately leads to an $\varepsilon$-approximation of $M$. 
Using 
$$
P_2' \coloneqq \left\{\rho_0 \otimes \cdots \otimes \rho_n : \rho_i \in \mathcal{M}_d^{+} \text{ diagonal and } \Vert\bigotimes_{i=0}^{n} \rho_i\Vert_2 \leq 1\right\} \subseteq P_2
$$
instead of $P_2$ in the proof of Theorem \ref{thm:weirdconv}, we obtain that 
$$
\textrm{nn-rank}_{(\Omega,G)}^{\varepsilon, \ell_2}(M) \leq \lceil D_2 \cdot (2K/\epsilon)^2\rceil \cdot |G|
$$
which is independent of the ambient dimension.

Since the Schatten $p$-norm on the space of diagonal matrices is equivalent to the $\ell_p$-norm, we can apply Theorem \ref{thm:approxcara} and Theorem \ref{thm:weirdconv} for $1 < p < \infty$ (see Remark \ref{rem:lp-approxCara} for details).

(iii)-(iv) Let $M \in \mathcal{Y}_{\sqrt{}, K}$. Again, by Proposition \ref{prop:tens-rank-ineq}, $\textrm{psd-rank}_{(\Omega,G)}^{\varepsilon, p}(M)$ is upper bounded by $\textrm{sqrt-rank}_{(\Omega,G)}^{\varepsilon,p}(M)$. 
Applying Corollary \ref{cor:psdrkbound} to the corresponding diagonal matrix $\sigma$ gives an upper bound for $\textrm{q-sqrt-rank}_{(\Omega,G)}^{\varepsilon,p}(\sigma)$. This is again independent of the ambient dimension.
\end{proof}

\begin{remark}
(i) Both sets $\mathcal{X}_K$ and $\mathcal{Y}_{\sqrt{}, K}$ are not convex in general. In particular, by Proposition \ref{pro:gauge} (v), (vi) 
$$\left\{M \in \mathcal{K}_{n,d} : d \in \mathbb{N}, M \text{ nonneg. and } \Vert M\Vert_{\ell_{1/2}} \leq K^2 \right\} \subseteq \mathcal{Y}_{\sqrt{},K}$$
and
$$\mathcal{Y}_{\sqrt{},K} \subseteq \left\{M \in \mathcal{K}_{n,d} : d \in \mathbb{N}, M \text{ nonneg. and } \Vert M\Vert_{\ell_{p/2}} \leq K^2 \right\}$$
where $\Vert \cdot \Vert_{\ell_q}$ is the entrywise $q$-quasinorm for $0 < q < 1$. This shows that $\mathcal{Y}_{\sqrt{},K}$ coincides with the (non-convex) $\ell_{1/2}$-ball of radius $K^2$ in using the normalization $\mu_{\sqrt{}, 1}$.

(ii) Similarly to  psd matrices  (Corollary \ref{cor:Schatten1}) we can upper bound  the approximate ranks for nonnegative tensors in the $\ell_1$-norm.  The correspondence between diagonal psd matrices and nonnegative matrices yields for a nonnegative and $G$-invariant  $M \in \mathcal{K}_{n,d}$ the inequality 
$$\textrm{rank}_{(\Omega,G)}^{\varepsilon, \ell_1}(M) \leq d^{n+1} \cdot \textrm{rank}_{(\Omega,G)}^{\varepsilon, \ell_2}(M). 
$$
This is the same $d$-dependence as the one of the exact Carath\'eodory Theorem \cite{We94}.
Similar results appear for all other ranks of nonnegative tensors.
\demo
\end{remark}

\begin{example}
\label{ex:tensor-ranks} 
We now give some concrete examples of decompositions whose separations between ranks disappear in the approximate case. From now on we consider the space $\mathcal{K}_{1,d} = \mathbb{C}^d \otimes \mathbb{C}^d \cong \mathcal{M}_{d}$ and $\Lambda_1$-decompositions, i.e.\ decompositions of the form 
$$M = \sum_{\alpha=1}^{r} v^{[0]}_\alpha \otimes v^{[1]}_\alpha,$$
where $v^{[i]}_\alpha \in \mathbb{C}^{d}$. 

Recall that for $M$ nonnegative, $\textrm{nn-rank}_{\Lambda_1}(M)$ is the smallest integer $r$ such that there exists a decomposition of $M$ with $v^{[i]}_\alpha \in \mathbb{R}^d_{+}$ for all ${\alpha = 1, \ldots, r}$ (i.e.\ vectors with nonnegative entries). Furthermore, $\textrm{psd-rank}_{\Lambda_1}(M)$ is the smallest integer $r$ such that there exist $A^{[i]}_{j} \in \mathcal{M}_r^{+}$ for $j = 1, \ldots, d$ and $i = 0,1$ which generate a decomposition
$$ M_{kl} = \sum_{\alpha_0,\alpha_1 = 1}^{r} \left(A^{[0]}_{k}\right)_{\alpha_0, \alpha_1} \cdot \left(A^{[1]}_{l}\right)_{\alpha_0,\alpha_1} = \tr\left( \left(A^{[0]}_k\right) \cdot \left(A^{[1]}_l\right)^t \right) $$
Assume $1 < p \leq 4/3$, $p =2$ or $4 \leq p < \infty$ and $\varepsilon > 0$ fixed. The separations are based on examples mentioned in \cite{Fa14}.

\begin{enumerate}[label=(\roman*), wide=\parindent, labelwidth=!]
	\item Consider the normalized Euclidean distance matrix $${U_d = M_d/\Vert M_d\Vert_{\ell_1} \in \mathcal{M}_d},$$ where $M_d$ is defined as
	$$ 
	M_d \coloneqq ((i-j)^2)_{i,j=1}^{d}.
	$$
	It was shown that $\textrm{sqrt-rank}_{\Lambda_1}(M_d) = \textrm{psd-rank}_{\Lambda_1}(M_d) = 2$ \cite{Fa14} and
	$$\textrm{nn-rank}_{\Lambda_1}(M_d) \geq \log_2(d).$$ Obviously the same statement is true for $U_d$. Hence, we obtain $${\textrm{sqrt-rank}_{\Lambda_1} \ll \textrm{nn-rank}_{\Lambda_1}} \quad \text{ and } \quad \textrm{psd-rank}_{\Lambda_1} \ll \textrm{nn-rank}_{\Lambda_1}.$$
	Since $\Vert U_d\Vert_{\ell_1} = 1$, Corollary \ref{cor:sepTen} shows that the separations for this example vanish, since $\textrm{nn-rank}_{\Lambda_1}^{\varepsilon, \ell_p}(U_d)$ can be upper bounded independently of $d$.
	\item Let $S_d$ be the slack matrix of a $d$-gon in the plane. We define the normalized slack matrix as $V_d \coloneqq S_d/{\Vert S_d\Vert_{\ell_1}}$.
	It was shown that ${\textrm{rank}_{\Lambda_1}(S_d) = 3}$ and $\textrm{psd-rank}_{\Lambda_1}(S_d)$ diverges if $d$ goes to infinity \cite{Go12}. Obviously, the same holds for $V_d$. Hence $\textrm{rank}_{\Lambda_1} \ll \textrm{psd-rank}_{\Lambda_1}$.
	
	Since $\Vert V_d\Vert_{\ell_1} = 1$, Corollary \ref{cor:sepTen} shows that the separation for this example vanishes, i.e.\ $\textrm{psd-rank}_{\Lambda_1}^{\varepsilon, \ell_p}(V_d)$ can be upper bounded independently of $d$. \demo
\end{enumerate}
\end{example}

Note that in these examples the normalization is important, because both $\Vert S_d \Vert_{\ell_1}$ and $\Vert M_d \Vert_{\ell_1}$ are unbounded as a function of $d$.  
Further note that this discussion can  be extended to $\Lambda_n$-decompositions with $n \geq 1$. One application fulfilling the normalization condition is the interpretation of a tensor $M \in \mathcal{K}_{n,d}$ as a probability mass function $P(X_0, \ldots, X_n)$ over $n+1$ discrete random variables, taking values in $\{1, \ldots, d\}$ and setting
$$m_{i_0, \ldots, i_n} \coloneqq P(X_0 = i_0, \ldots, X_n = i_n).$$
In this case $M$ is nonnegative and bounded with $\Vert M \Vert_{\ell_1} = 1$. The nonnegative $\Lambda_n$-decomposition corresponds to a \emph{hidden Markov model} \cite{Gl19}.

\section{Algorithm to find the approximate decomposition}
\label{sec:algo}

The sequence approximating a matrix in the  convex hull in Theorem \ref{Thm:ApproxCaratheodory} can be computed by a deterministic algorithm presented in \cite{Iv19} for uniformly smooth Banach spaces. In the following we give an explicit description of this algorithm for Schatten $p$-classes.

The Schatten $p$-norm is everywhere differentiable except for $0$. 
Let $X,Y \in \mathcal{M}_d \setminus \{0\}$ be two arbitrary matrices. Further, let
$$ X = U \cdot \Sigma \cdot V^{*}$$
be a singular value decomposition, in particular, $U,V \in \mathcal{M}_{d, r}$ are isometries where $r \leq d$ and 
$$\Sigma = \diag(\lambda_1, \ldots, \lambda_r) \in \mathcal{M}_r$$
is a diagonal matrix with positive entries.
We  also denote the $i$th column of $U$ and $V$ by $u_i$ and $v_i$, respectively. Then, the directional derivative of $\Vert \cdot \Vert_p$ at $X$ in direction $Y$ can be evaluated (see Ref. \cite{Wa92} for details) by
\be \label{eq:partialderivative} \mathcal{D}_Y \Vert \cdot \Vert_p \:\biggr\rvert_{X} = \frac{1}{\Vert X \Vert_p^{p-1}} \cdot \sum_{i=1}^{r} \lambda_i^{p-1} \cdot u_i^{*} \cdot Y \cdot v_i.\ee
Since $X \cdot v_i = \lambda_i \cdot u_i$, it further holds that 
$$\mathcal{D}_X \Vert \cdot \Vert_p \:\biggr\rvert_{X} = \Vert X\Vert_p.$$ 
This is a necessary property of a functional for the algorithm of \cite{Iv19} to be applicable and leads to the following algorithm.
\begin{algorithm}
The sequence $\{X_i\}_{i=1}^\infty \subseteq \mathcal S$ which generates the approximation ${A_k = \frac{1}{k} \sum_{i=1}^{k} X_i}$ of $A \in \conv(\mathcal S)$ satisfying Theorem \ref{Thm:ApproxCaratheodory} can be constructed in the following way:
\begin{enumerate}[label=(\roman*)]
	\item $X_1$ is an arbitrary point in $\mathcal S$. 
	\item For the constructed sequence $\{X_1, \ldots, X_k\} \subseteq \mathcal S$, $k \geq 1$ we choose $X_{k+1} \in \mathcal S$ such that for $Y = X_{k+1} - A$ the following holds
	\begin{equation}
	\label{eq:algcond}
	\mathcal{D}_{Y} \Vert \cdot \Vert_p \:\biggr\rvert_{A_k - A} \leq 0 .
	\end{equation}
	\demo
\end{enumerate}
\label{alg:Ivanov}
\end{algorithm}
Note that as long as $A \in \conv(\mathcal S)$, there always exists an $X_{k+1} \in \mathcal S$ such that the inequality is true. Moreover, Algorithm \ref{alg:Ivanov} makes no further constraints on the choice of $X_{k+1}$, and hence the upper bound given in Theorem \ref{Thm:ApproxCaratheodory} is satisfied for any sequence constructed with this algorithm. 

If we want to apply this algorithm to the $\ell_p$-norm instead, we only have to replace Equation \eqref{eq:partialderivative} with 

\begin{equation}
\mathcal{D}_{Y} \Vert \cdot \Vert_{\ell_p} \:\biggr\rvert_{X} = \frac{1}{{\Vert X \Vert}^{p-1}_{\ell_p}} \cdot \sum_{i,j=1}^{d} y_{ij} \cdot x_{ij} \cdot |x_{ij}|^{p-2}
\end{equation}
where $x_{ij}$ and $y_{ij}$ the entries of $X$ and $Y$ respectively at position $(i,j)$.

We now apply the above algorithm to a nonnegative $\Lambda_1$-decomposition on the space $\mathcal{K}_{1,d}$. We set
$$ \mathcal S = \{e_i \otimes e_j: i,j \in \{1, \ldots, d\} \}$$
where $e_i$ the standard basis-vectors in $\C^d$. Assume that $A \in \mathcal{K}_{1,d}$ is nonnegative and $\Vert A \Vert_{\ell_1} = 1$. Obviously we have that $A \in \conv(\mathcal S)$ and the corresponding convex combination is a valid nonnegative $\Lambda_1$-decomposition of $A$. 

Note that in step (ii) of Algorithm \ref{alg:Ivanov} there is in general not a unique choice which satisfies Equation \eqref{eq:algcond}. Hence, we present a standard and a greedy method of choice.
\begin{method}
Define an order on $\mathcal S$ and choose the smallest element which satisfies Equation \eqref{eq:algcond}.
\label{met:order}
\end{method}
\begin{method}
\label{met:greedy}
Choose the element in $\mathcal S$ which attains the smallest value on the left hand side of Equation \eqref{eq:algcond}.
\end{method}

\begin{figure}[h!]
\centering
\includegraphics[scale=0.85]{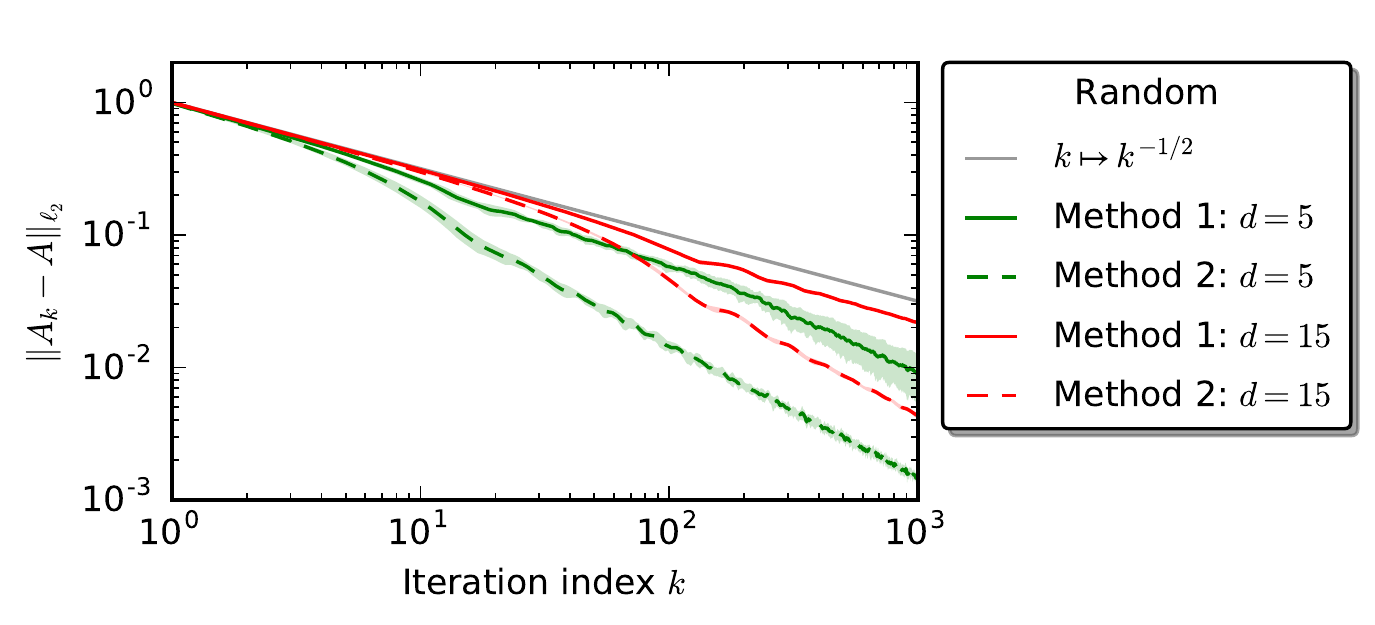}
\caption{Application of Algorithm \ref{alg:Ivanov} to random matrices $A$ at two different dimensions $d = 5$ (green) and $d = 15$ (red). The entries are independently uniformly distributed on $[0,1]$ with the constraint that $\Vert A \Vert_{\ell_1} = 1$. The $x$-axis shows the iteration index and the $y$-axis shows the error measured in the $\ell_2$-norm. Gray shows the function ${k \mapsto k^{-1/2}}$ as an orientation for the theoretical convergence rate (up to a constant). Method \ref{met:order} is plotted as a continuous line, Method \ref{met:greedy} as dashed line. The sampling size for the random matrices is $20$ and the plots show the mean value and the standard deviation.} 
\label{fig:randMat}
\end{figure}

\begin{figure}[h!]
\centering
\includegraphics[scale=0.85]{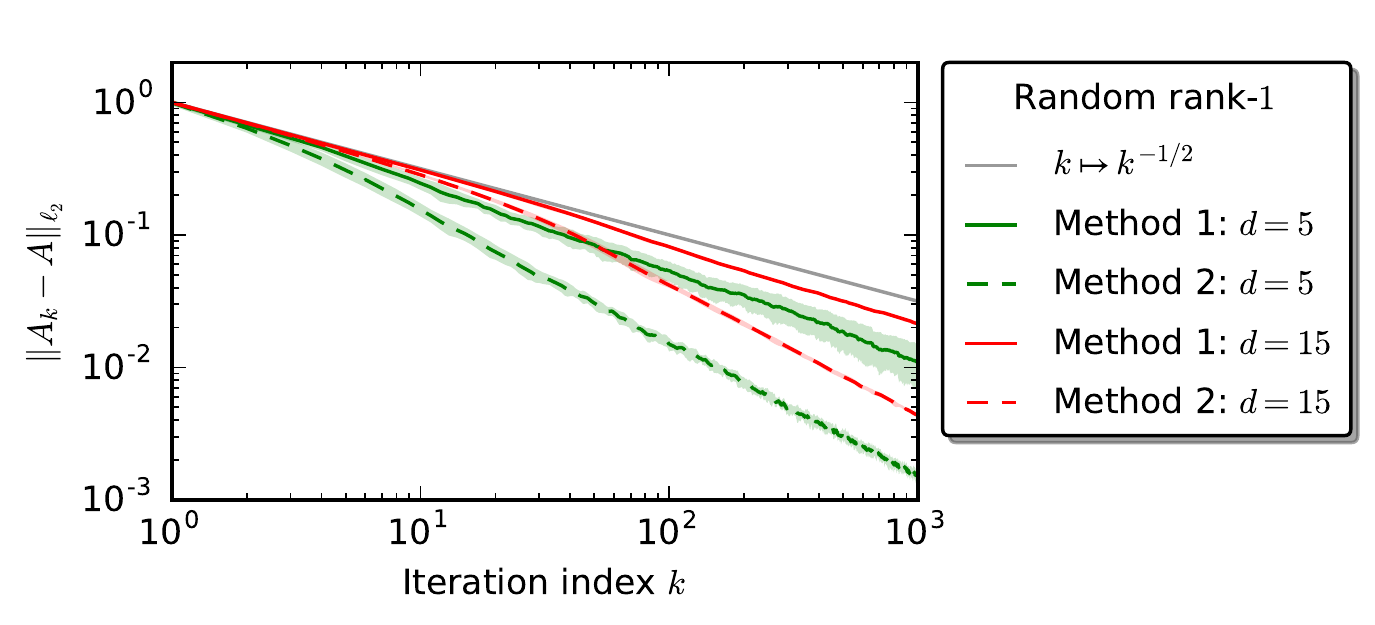}
\caption{For the description of the graph we refer to the caption of Figure \ref{fig:randMat}.
$A = a \cdot b^T$ is a nonnegative random rank-1 matrix with $a,b \in [0,1]^d$ uniformly distributed and $A$ normalized to $1$, i.e.\ $\Vert A \Vert_{\ell_1} =1$. The sampling size is $20$ and the plots show the mean value and the standard deviation.}
\label{fig:randRk1Mat}
\end{figure}

\begin{figure}[h!]
\centering
\includegraphics[scale=0.85]{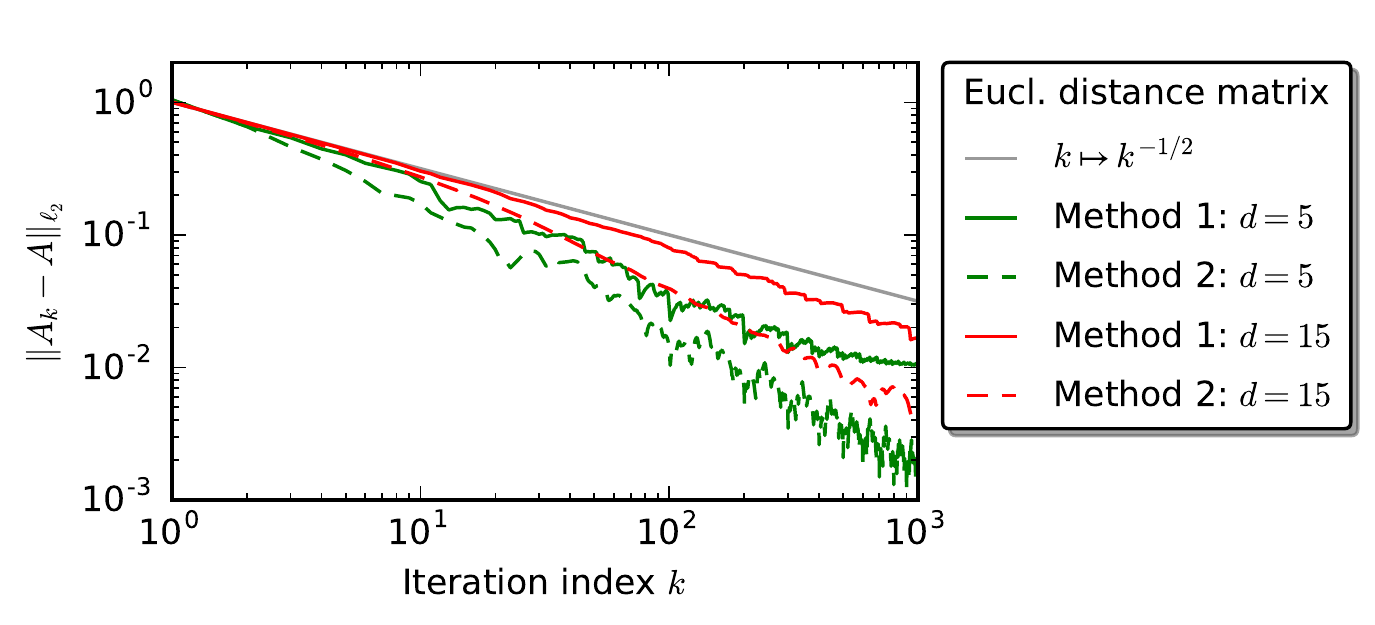}
\caption{For the description of the different lines we refer to the caption of Figure \ref{fig:randMat}.
$A$ is the normalized Euclidean distance matrix $M_d/\Vert M_d \Vert_{\ell_1}$.}
\label{fig:EuclDistMat}
\end{figure}

In the following numerical examples we use for Method \ref{met:order} the lexicographic ordering
$$(i,j) \preceq (i',j') : \Longleftrightarrow i < i' \text{ or } (i = i' \text{ and } j \leq j').$$

Figure \ref{fig:randMat} shows the application of the two methods to random matrices with uniformly independently distributed entries normalized to $1$. Both methods show a $k^{-1/2}$ convergence for small $k$ and a transition to a faster convergence rate depending on the method. As expected, the   Method \ref{met:greedy} (the greedy method) converges faster than Method \ref{met:order}.
Concerning the choice of the ordering in Method \ref{met:order} there would be no difference to other orderings in that case since the entries are uniformly independently distributed. The numerical experiments also indicate that the iteration index $k$ where the transition of faster convergence appears grows with increasing dimension $d$ of the matrices. 

Figure \ref{fig:randRk1Mat} shows the application of the algorithm to random rank-$1$ matrices. The results are qualitatively similar to the case of random matrices which have almost surely full rank. This is due to the fact that random rank-$1$ matrices are almost surely a linear combination of all $d^2$ elements of $S$. Hence the algorithm cannot distinguish between random matrices and random rank-$1$ matrices.

Since the algorithm cannot distinguish matrices with different ranks, Figure \ref{fig:EuclDistMat}, which shows the application to the Euclidean distance matrix $M_d$ normalized to $1$, also shows a similar convergence rate in comparison to Figure \ref{fig:randMat} and Figure \ref{fig:randRk1Mat}. Note that the fluctuations of the convergence are a natural consequence of the fact that for every iteration step $k$ the prefactor of the linear combinations $1/k$ are fixed. Since the graphs in  Figure \ref{fig:randMat} and Figure \ref{fig:randRk1Mat} show an average convergence rate the fluctuations do not appear therein. 

The Python code \texttt{ApproximationAlgorithm.py} of this numerical simulation is available together with this submission.

\section{Conclusions and Outlook}
\label{sec:concl}

In this paper, we have defined and studied various notions of approximate $(\Omega,G)$-ranks. The exact versions thereof were introduced in  \cite{De19d} --- 
the idea is to consider elements in a tensor product of matrix spaces  $\mc{M}_d$ or of  column vectors $\mathbb{C}^d$, and to express them as a sum of elementary tensor factors. 
 The arrangement of  indices in the sum is determined by the facets of a weighted simplicial complex $\Omega$, and a group action $G$ specifies the permutations of tensor product indices which leave the element invariant. 
The approximate $(\Omega,G)$-rank is defined as the minimal rank of an element 
within an $\varepsilon$-ball of the original element. 
The ball is measured with respect to the Schatten $p$-norm and the $\ell_p$-norm for elements in  $\mc{M}_d\otimes \cdots \otimes\mc{M}_d $ and  $\mathbb{C}^d \otimes \cdots \otimes \mathbb{C}^d$, respectively. 
Specifically, for psd matrices, we have defined the approximate versions of the 
$(\Omega,G)$-rank, 
-purification rank, 
-quantum square root rank, 
and -separable rank (Definition \ref{def:approxranks}).  
For nonnegative tensors, we have defined the approximate versions of the 
$(\Omega,G)$-rank,
-psd rank, 
-square root rank, 
and -nonnegative rank (Definition \ref{def:approxnonneg}). 

Our main technical result is that an element in a convex hull can be approximated by a dimension independent number of generating elements 
of the convex hull, for several Schatten $p$-norms (Theorem \ref{Thm:ApproxCaratheodory}) and $\ell_p$-norms (Remark \ref{rem:lp-approxCara}). 
We have leveraged this result to prove dimension independent upper bounds for the approximate $(\Omega,G)$-rank (Theorem \ref{thm:weirdconv}), 
-purification rank (Corollary \ref{cor:psdrkbound}),  
and -separable rank (Proposition \ref{prop:seprkbound}) 
(up to a gauge function defined in Equation \eqref{eq:gauge} and the cardinality of the group action $G$).
It follows that many separations between exact $(\Omega,G)$-ranks disappear in the approximate case, both for psd matrices and separable states (Corollary \ref{cor:sepMat}), 
and for nonnegative tensors (Corollary \ref{cor:sepTen}). 
Specifically, for psd matrices, the separation between rank, purification rank, and quantum square root rank disappear in the approximate case.
Similarly, for separable states, the separation between rank, purification rank and separable rank disappear in the approximate case. 
For nonnegative tensors, the separation between rank, nonnegative rank, and psd rank disappear in the approximate case.
Finally, we have presented a procedure (Algorithm \ref{alg:Ivanov}) to 
obtain such approximations, attaining the bounds of Theorem \ref{Thm:ApproxCaratheodory}, and have illustrated its performance with a few examples. 

These results can have an impact in the fields where these tensor decompositions are used.
The disappearance of  separations between the approximate $(\Omega,G)$-rank and the approximate $(\Omega,G)$-purification rank implies that the separation between operator Schmidt rank and purification rank (see \cite{De13c}) disappears for states with a bounded gauge function. 
For this application it would be important to look for effective methods to compute the upper bounds of our approximate ranks, in particular $\mu_{\sqrt{}}$ for the purification rank. 
Another application is in the field of communication complexities,  
where the joint probability distribution of two parties is described by a nonnegative matrix.  
Its  nn-rank describes the random communication complexity,
and the psd-rank the quantum communication complexity \cite{Fi11, Ja13}.
The results of this paper imply that approximate notions of random communication complexity and quantum communication complexity can be upper bounded by a constant independent of the size of the probability distribution (i.e.\ the number of outcomes of the random variables generated by the parties). Ref. \cite{Kl20} studies this in a rigorous way and also presents applications of approximate tensor decompositions to different notions of probabilistic models.

This work leaves several open questions. 
The most important one is whether one can exploit the tensor product structure to obtain better upper bounds. So far our bounds just use the fact that elements are in the convex hull of some set, 
but do not exploit, e.g., their spectral properties  (see Remark \ref{rem:repeat}) or their tensor product structure.  
Taking either of these into account  should allow deriving better bounds, which could be relevant to prove a disappearance of separations in the $p = 1$ case, for example. In addition, it would be very interesting to investigate whether similar bounds can be obtained scaling with the same norm as the approximation error instead of the gauge functions. The missing link between Schatten $p$-norms and membership in convex the sets $B_p$ (see Equation \eqref{eq:Bp}) suggests that the methods used in this paper may be too weak for this purpose. It would also be interesting to do a similar study with local distance measures, which would exploit the $\Omega$ structure of the element. In this case, we again expect tighter bounds, since local distance measures are looser than the global distance measures considered in this paper.

Another interesting question is to study the approximate ranks of this paper in the intersection of all $\epsilon>0$. This is related to the notion of border rank  \cite{Br19}. Note that, in contrast to matrices, for tensors of higher order the best low-rank approximation might not exist \cite{Si08}.

\section{Acknowledgements}

We thank the anonymous referee for the helpful comments and Alexander Nietner for pointing out a mistake in Corollary 30. AK is supported by the Austrian Science Fund (FWF) through the Stand Alone project P33122-N.

\appendix
\section{Weighted simplicial complexes and group actions}
\label{sec:app_wsc}

In this appendix we give a rigorous definition of weighted simplicial complexes (wsc) $\Omega$ and group actions $G$ \cite{De19d}. Recall that we write $[n]$ for the set $\{0, \ldots, n\}$, and $\mathcal{P}_n$ for the power set $\mathcal{P}([n])$ (i.e.\ the set of all subsets of $[n]$, which has $2^{n+1}$ elements).

\begin{definition}
\label{def:wsc}
(i) A \emph{weighted simplicial complex (wsc) on $[n]$} is a function $$\Omega: \mathcal{P}_n \to \N$$ such that $S_1 \subseteq S_2$ implies that $\Omega(S_1)$ divides $\Omega(S_2)$.

(ii) A set $S \in \mathcal{P}_n$ is called  a \emph{simplex of $\Omega$}, if $\Omega(S) \neq 0$. In the following, we will assume that each singleton $\{i\} \in \mathcal{P}_n$ is a simplex, and call the  elements $i \in [n]$ \emph{vertices} of the wsc. A maximal simplex (with respect to inclusion) is called a \emph{facet of $\Omega$}. The set of all facets is denoted
$$
\mathcal{F} \coloneqq \{F \in \mathcal{P}_n: F \text{ facet of } \Omega \},
$$
and the set of all facets which contain vertex $i$ is denoted
$$\mathcal{F}_i \coloneqq \{F \in \mathcal{F}: i \in F\}.
$$
The restriction of $\Omega$ to $\mathcal{F}$ gives rise to a multiset, called $\widetilde{\mathcal{F}}$, which contains $F \in \mathcal{F}$ precisely $\Omega(F)$-times. 
Analogously, we define the multiset $\widetilde{\mathcal{F}}_i$ for $i \in [n]$. 
There exists a canonical collapse map
$$c:\widetilde{\mathcal{F}} \to \mathcal{F} \quad \text{ and } \quad c:\widetilde{\mathcal{F}}_i \to \mathcal{F}_i$$
mapping all copies of a facet to the underlying facet. 

(iii) Two vertices $i,j$ are \emph{neighbors} if
$$ \mathcal{F}_i \cap \mathcal{F}_j \neq \emptyset$$
Two vertices $i,j$ are \emph{connected} if there exists a sequence of vertices $i = i_0, i_1, \ldots, i_k = j$ such that $i_m$ and $i_{m+1}$ are neighbors for all ${m \in [k-1]}$.
\end{definition}

The next step is the definition of group actions on weighted simplicial complexes. First we will introduce the notions of $G$-linearity and $G$-invariance. 

\begin{definition} Let $G$ be a group action on the sets $X$,$Y$. A function $f:X \to Y$ is called \emph{$G$-linear} if
$$f(gx) = gf(x)$$
holds for all $x \in X$ and $g \in G$. If $G$ acts trivially on $Y$, we call $f$ \emph{$G$-invariant}.
\end{definition}

We say that the group action of $G$ on $X$ is \emph{free} if $\textrm{Stab}(x) = \{e\}$ for all $x \in X$, where
$$ \textrm{Stab}(x) \coloneqq \{g \in G: gx = x\}.$$ 
To define a group action on $\Omega$ we first consider a group action of $G$ on the set $[n]$. Without loss of generality, $G$ might be assumed to be a subgroup of $S_{[n]}$, the permutation group on the set $[n]$.  Every group action on $[n]$ canonically induces a group action on $\mathcal{P}_n$. Further, if $\Omega$ is $G$-invariant, $G$ also induces a group action on $\mathcal{F}$, since for $F \in \mathcal{F}$, $g \in G$ and $F \subsetneq S$ it holds that $\Omega(gS) = \Omega(S) = 0$.\\
\begin{definition} A group action $G$ on the wsc $\Omega$ consists of the following:
\begin{enumerate}[label=(\roman*)]
	\item An action $G$ on $[n]$ such that $\Omega$ is $G$-invariant with respect to the induced action on $\mathcal{P}_n$. This induces an action of $G$ on $\mathcal{F}$.
	\item An action of $G$ on $\widetilde{\mathcal{F}}$, such that the collapse map 
	$$c: \widetilde{\mathcal{F}} \to \mathcal{F}$$ is $G$-linear (we also say the action of $G$ on $\widetilde{\mathcal{F}}$ refines the action of $G$ on $\mathcal{F}$). 
\end{enumerate}
Further, an action of $G$ on the wsc $\Omega$ is called \emph{free} if the action of $G$ on $\widetilde{\mathcal{F}}$ is free.
\end{definition}

Note that in order to obtain  a group action on a wsc, one does not only have to specify how a group action acts on $\mathcal{F}$, but also how it permutes the copies of facets in the multiset $\widetilde{\mathcal{F}}$. 
It is shown in \cite[Prop. 7]{De19d} that a group action on $\mathcal{F}$ can always be refined to a free group action on $\widetilde{\mathcal{F}}$. 

\bibliographystyle{abbrv}

\end{document}